\crefname{hypothesis}{Hypothesis}{Hypotheses}
\crefname{assumption}{Assumption}{Assumptions}
\title{Strong Stationarity Conditions for Optimal Control Problems Governed by 
a Rate-Independent Evolution Variational Inequality\thanks{Submitted to the editors DATE.
}}
\author{Martin Brokate\thanks{%
Technische Universit\"at M\"unchen,
Department of Mathematics, M6, 
Boltzmannstra{\ss}e 3,
85748 Garching bei M\"unchen, Germany;
Weierstrass Institute for Applied Analysis and Stochastics, 
Mohrenstra{\ss}e 39, 
10117 Berlin, Germany;
Faculty of Civil Engineering, Czech Technical University in Prague, 
Th\'{a}kurova 7, 16629 Praha 6, Czech Republic;
\url{https://www.professoren.tum.de/brokate-martin},
\email{brokate@ma.tum.de}%
}%
\and
Constantin Christof\thanks{%
Technische Universit\"at M\"unchen,
Department of Mathematics, M17,
Boltzmannstra{\ss}e 3,
85748 Garching bei M\"unchen, Germany,
\url{https://www-m17.ma.tum.de/Lehrstuhl/ConstantinChristof},
\email{christof@ma.tum.de}
}%
}
\newcommand{\sgn}{\operatorname{sgn}}
\newcommand{\closure}{\operatorname{cl}}
\newcommand{\var}{\operatorname{var}}
\newcommand{\dd}{\,\mathrm{d}}
\newcommand{\crit}{\mathrm{crit}}
\newcommand{\red}{\mathrm{red}}
\newcommand{\rad}{\mathrm{rad}}
\newcommand{\supp}{\operatorname{supp}}
\newcommand{\ptw}{\mathrm{ptw}}
\newcommand{\Uad}{U_{\textup{ad}}}
\newcommand{\II}{\mathcal{I}}
\newcommand{\JJ}{\mathcal{J}}
\newcommand{\KK}{\mathcal{K}}
\newcommand{\PP}{\mathcal{P}}
\renewcommand{\S}{\mathcal{S}}
\newcommand{\R}{\mathbb{R}}
\newcommand{\ddt}{\frac{\mathrm{d}}{\mathrm{dt}}}
\newcommand{\weakly}{\rightharpoonup}
\newcommand{\weaklystar}{\stackrel\star\rightharpoonup}
\definecolor{darkgreen}{rgb}{0,0.5,0}
\definecolor{darkred}{rgb}{0.8,0,0}
\newcommand{\stkout}[1]{\ifmmode\text{\sout{\ensuremath{#1}}}\else\sout{#1}\fi}
\patchcmd{\@addmarginpar}{\ifodd\c@page}{\ifodd\c@page\@tempcnta\m@ne}{}{}
\begin{document}

\maketitle

\begin{abstract}%
We prove strong stationarity conditions 
for optimal control problems that are  governed
by a prototypical  
rate-independent evolution variational inequality,
i.e., first-order necessary optimality conditions in the form of a 
primal-dual multiplier system that are
equivalent to the purely primal notion of Bouligand stationarity. 
Our analysis relies on recent results on the 
Hadamard directional differentiability of the scalar stop
operator and a new concept of temporal polyhedricity that generalizes
classical ideas of Mignot.
The established strong stationarity system is compared with 
known optimality conditions for 
optimal control problems governed by 
elliptic obstacle-type variational inequalities and 
stationarity systems obtained by regularization.
\end{abstract}

\begin{keywords}
optimal control,
rate independence,
stop operator, 
variational inequality, 
sweeping process, 
strong stationarity,
Bouligand stationarity,
Kurzweil integral,
polyhedricity,
hysteresis
\end{keywords}

\begin{AMS}
49J40, 47J40, 34C55, 49K21, 49K27
\end{AMS}

\section{Introduction and summary of results}%
\label{sec:1}%
This paper is concerned with the derivation of  
first-order
necessary optimality conditions for
optimal control problems of the type
\begin{equation*}
\label{eq:P}
\tag{P}
\left \{~~
	\begin{aligned}
		\text{Minimize} 
		\quad &  \JJ(y, y(T), u)  \\
        \text{w.r.t.}
        \quad &y \in CBV[0, T], \quad  u \in \Uad,\\
		\text{s.t.}
		\quad & \int_0^T (v - y)\dd (y - u)\geq 0~~ \forall v \in C([0, T]; Z), \\
        &y(t) \in Z\quad \forall t \in [0, T],\quad y(0) = y_0.
	\end{aligned}
\right.
\end{equation*}
Here, 
$y$ denotes the state; 
$u$ denotes the control; 
$T>0$ is given;
$CBV[0,T]$ is the space of real-valued 
continuous functions of bounded variation on $[0, T]$; 
$\Uad$ is a subset of a suitable control space $U \subset CBV[0,T]$;
$\JJ\colon L^\infty(0, T) \times \R \times U \to \R$ 
is a sufficiently smooth objective function;
$Z = [-r,r]$ is a given interval with $r>0$; 
$C([0, T]; Z)$ is the set of continuous functions 
on $[0,T]$ with values in $Z$; 
$y_0 \in Z$ is a given initial value; 
and the integral in the governing 
variational inequality is understood 
in the sense of Kurzweil-Stieltjes (see \cite{Monteiro2019} and the  \hyperref[sec:appendix]{appendix} of this paper
for details on this type of integral).
For the precise assumptions on the quantities in 
\eqref{eq:P}, we refer to \cref{sec:3}. 
The main result of this work -- \cref{th:main} -- establishes 
a so-called strong stationarity system
for the problem \eqref{eq:P}. 
This is a first-order necessary optimality condition in primal-dual form
that is satisfied by a control $\bar u \in \Uad$
if and only if $\bar u$ is a Bouligand stationary point
of \eqref{eq:P}, i.e., 
if and only if the directional derivative of 
the reduced objective function of \eqref{eq:P}
at $\bar u$
is nonnegative in all admissible directions.
See also \eqref{eq:strongstatsys-2} below for the resulting 
stationarity system. 

\subsection{Background and relation to prior work}
Before we present and discuss the 
strong stationarity system derived in \cref{th:main}
in more detail, let us give some background. 
To keep the discussion concise, we 
focus on strong stationarity conditions 
for infinite-dimensional optimization problems 
arising in optimal control. 
For related results 
in finite dimensions, 
see \cite{Flegel2007,Harder2017,Hoheisel2013,Luo1996,Scheel2000}
and the references therein. 

In the field of infinite-dimensional nonsmooth optimization, 
strong stationarity conditions
(although originally not referred to as such)
have first been derived for optimal control problems 
governed by elliptic obstacle-type variational inequalities 
in the seminal works \cite{Mignot1976,MignotPuel1984}
of Mignot and Puel in the nineteen-seventies and -eighties. 
If we use a notation analogous to that 
in \eqref{eq:P}, then this kind of problem 
can be formulated (in its most primitive form)
as follows:
\begin{equation}
\label{eq:optstaclecontrol}
	\begin{aligned}
		\text{Minimize} 
		\quad &  \JJ(y, u)  \\
        \text{w.r.t.}
        \quad &y \in H_0^1(\Omega), \quad  u \in \Uad \subset L^2(\Omega),\\
		\text{s.t.}
		\quad & y \in Z, \quad \int_\Omega \nabla y \cdot \nabla (v - y)\dd x \geq \int_\Omega u(v - y) \dd x \quad \forall v\in Z.
	\end{aligned}
\end{equation}
Here, $\Omega \subset \R^d$, $d \in \mathbb{N}$,
is a nonempty open bounded set;
$H_0^1(\Omega)$ and $L^2(\Omega)$ are defined as usual, see
\cite{Evans2010,GilbargTrudinger1977};
$\JJ\colon H_0^1(\Omega) \times L^2(\Omega) \to \R$
is a Fr\'{e}chet differentiable objective
function
with partial derivatives 
$\partial_1 \JJ(y,u) \in H^{-1}(\Omega)$
and 
$\partial_2 \JJ(y,u) \in L^2(\Omega)$
(where $H^{-1}(\Omega)$ denotes the topological dual of
$H_0^1(\Omega)$); 
$\Uad \subset L^2(\Omega)$
is a convex, nonempty, and closed
set;
$\nabla$ is the weak 
gradient; and $Z$ is a nonempty set of the type
\[
    Z:= \left \{ v \in H_0^1(\Omega) \colon \psi_1 \leq v \leq \psi_2 \text{ a.e.\ in } \Omega\right \}
\]
involving two given measurable functions 
$\psi_1, \psi_2\colon \Omega \to [-\infty, \infty]$.
The main difficulty 
that arises when deriving 
first-order necessary optimality conditions for 
problems like 
\eqref{eq:optstaclecontrol} is that the 
governing variational inequality 
causes the 
control-to-state operator
$S\colon L^2(\Omega) \to H_0^1(\Omega)$,
$u \mapsto y$, to be nondifferentiable 
(in the sense of G\^{a}teaux and Fr\'{e}chet).
This nonsmoothness prevents classical adjoint-based approaches
as found, e.g., in \cite{Troeltzsch2010}
from being applicable
and makes it necessary to 
develop tailored strategies to establish 
stationarity systems for local minimizers.
In \cite{Mignot1976,MignotPuel1984}, the problem of deriving first-order 
optimality conditions for \eqref{eq:optstaclecontrol}
was tackled by exploiting that 
the solution mapping
$S\colon L^2(\Omega) \to H_0^1(\Omega)$, $u \mapsto y$, 
of the lower-level variational inequality  
in \eqref{eq:optstaclecontrol}
is Hadamard directionally 
differentiable with directional derivatives 
$\delta := S'(u;h)$,
$u, h \in L^2(\Omega)$, that are uniquely characterized by 
the auxiliary problem
\begin{equation}
    \label{eq:dirdiffcharobstacleproblem}
        \delta \in K_\crit(y,u), 
        \quad \int_\Omega \nabla \delta \cdot \nabla (z - \delta) 
        \dd x \geq \int_\Omega h(z - \delta) \dd x
        \quad \forall  z\in K_\crit(y,u).
    \end{equation}
Here, 
$K_\crit(y,u) := K_{\tan}(y) \cap (u + \Delta y)^\perp$
denotes the so-called \emph{critical cone}
associated with $u$ and $y := S(u)$, i.e., 
the intersection of the kernel
\[
   (u + \Delta y)^\perp  :=
   \left \{
   z \in H_0^1(\Omega)\colon
   \int_\Omega u z - \nabla y \cdot \nabla z \dd x = 0
   \right \}
\]
of the functional $u + \Delta y \in H^{-1}(\Omega)$ 
and the tangent 
cone $K_{\tan}(y) \subset H_0^1(\Omega)$ to $Z$ at $y$
which is obtained by taking the closure
of the radial cone $ K_{\rad}(y) := \R_+(Z - y)$
in $H_0^1(\Omega)$, 
cf.\ \cite[section 2]{Harder2017} and \cite{Haraux1977,Mignot1976}.
By proceeding along the lines of \cite{Mignot1976,MignotPuel1984},
one obtains the following 
main result
for
the optimal control problem \eqref{eq:optstaclecontrol}:
If a control $\bar u \in \Uad$ with state 
$\bar y := S(\bar u)$ is given 
such that the set $\R_+(\Uad - \bar u)$ is dense in $L^2(\Omega)$,
then $\bar u$ is a Bouligand stationary point of 
\eqref{eq:optstaclecontrol} in the sense that 
\begin{equation}
\label{eq:Bouligandobstacle}
\left \langle \partial_1 \JJ(\bar y, \bar u), S'(\bar u;h)\right \rangle_{H_0^1} + \left (\partial_2 \JJ(\bar y, \bar u), h \right)_{L^2}
\geq 0 \quad \forall h \in \R_+(\Uad - \bar u)
\end{equation}
holds if and only if
there exist 
an adjoint state 
$\bar p \in H^1_0(\Omega)$ and 
a multiplier 
$\bar \mu \in H^{-1}(\Omega)$ such that 
$\bar u$, $\bar y$, $\bar p$, and $\bar \mu$
satisfy the system
\begin{equation}\label{eq:sstatobst}
        \begin{gathered}
            \bar p + \partial_2 \JJ(\bar y, \bar u) = 0~~\text{ in }
            L^2(\Omega),
            \\
            - \Delta \bar p = \partial_1 \JJ(\bar y, \bar u) - \bar \mu ~~\text{ in } H^{-1}(\Omega), \\
            \bar p\in K_\crit(\bar y, \bar u), \quad 
            \left \langle \bar \mu ,z\right \rangle_{H_0^1} 
            \geq 0 \quad \forall z\in K_\crit(\bar y, \bar u).
        \end{gathered}    
\end{equation}
Here and in what follows, the symbols
$\langle \cdot, \cdot \rangle$
and $(\cdot, \cdot)$
denote a dual pairing
and a scalar product, respectively. 
For a proof of 
the above result, see
\cite[Corollary~6.1.11]{ChristofPhd2018}.
Note that, since the inequality
\eqref{eq:Bouligandobstacle}
expresses that the directional derivatives 
of the reduced objective function
$L^2(\Omega) \ni u \mapsto \JJ(S(u), u) \in \R$
of \eqref{eq:optstaclecontrol}
are nonnegative in all admissible directions 
$h \in \R_+(\Uad - \bar u)$ at $\bar u$
and thus corresponds to the most 
natural first-order 
necessary optimality condition 
obtainable for a
directionally differentiable function,
and since the conditions 
\eqref{eq:Bouligandobstacle} and \eqref{eq:sstatobst}
are equivalent, 
the system \eqref{eq:sstatobst} 
can be considered the most precise first-order 
 primal-dual necessary optimality condition
possible for \eqref{eq:optstaclecontrol}. 
This is the reason why systems of the type \eqref{eq:sstatobst}
became known as \emph{strong stationarity conditions} 
since their initial appearance in \cite{Mignot1976,MignotPuel1984}. 

The main appeal of the system \eqref{eq:sstatobst} is, of course, 
its equivalence to the
Bouligand stationarity condition  \eqref{eq:Bouligandobstacle}. This characteristic property 
distinguishes \eqref{eq:sstatobst} 
from other first-order necessary 
optimality conditions and makes \eqref{eq:sstatobst}  
an important tool, e.g., 
for assessing which information about $\bar p$ and $\bar \mu$ is lost when a
stationarity system is derived 
by means of a regularization or discretization approach.
For details on this topic, 
we refer to the survey article \cite{Harder2017}. 
Because of these advantageous properties, 
strong stationarity conditions 
have come to play a distinct role in 
the field of optimal control of nonsmooth systems and 
have received considerable attention in the recent past.
See, e.g., 
\cite{Betz2021,ChristofPhd2018,Christof2022,ReyesMeyer2016,Herzog2013,Hintermueller2009,Wachmuth2014,Wachsmuth2020} for contributions 
on strong stationarity conditions for 
optimal control problems governed by 
various 
elliptic variational inequalities of the 
first and the second kind,
\cite{Betz2019,Christof2018nonsmoothPDE,Clason2021,Meyer2017} 
for extensions to optimal control problems 
governed by nonsmooth semi- and quasilinear PDEs, and 
\cite{Christof2021} for a generalization to the multiobjective setting.
Note that all of these 
works on
the concept of strong stationarity 
have in common that they are only concerned with 
elliptic variational inequalities or 
PDEs involving nonsmooth terms.
What has --
at least to the best of our knowledge --
not been accomplished 
so far in the literature
is the derivation of
a necessary optimality condition 
analogous to \eqref{eq:sstatobst}
for an optimal control problem that is governed 
by a true evolution variational inequality (where with ``true'' we mean 
that the inequality cannot be reformulated as a nonsmooth PDE or an elliptic problem, cf.\ \cite{Betz2019}). 
In fact, such an extension is even mentioned 
as an open problem in the seminal 
works of Mignot and Puel; see \cite[section 4]{MignotPuel1984}
and \cite{MignotPuel1984:2} where 
strong stationarity conditions for parabolic obstacle 
problems are conjectured upon. 
This absence of results on strong stationarity 
systems for evolution variational inequalities 
is very unsatisfying in view of the multitude 
of processes that are modeled by this 
type of variational problem 
in finance, mechanics, and physics; 
see \cite{Mielke2015,Sofonea2012}. 

The main reason for 
the lack of contributions on strong stationarity 
conditions
for evolution variational inequalities since the 
nineteen-seventies is that directional differentiability 
results analogous to that for the elliptic obstacle 
problem in \eqref{eq:dirdiffcharobstacleproblem} have 
not been available in the
instationary setting
for a long period of time. 
See, e.g., \cite[p.\ 582]{BonnansShapiro2000} where this problem is still referred 
to as open. Only recently, progress in this direction 
has been made. In \cite{Brokate2020,Brokate2015}, it 
could be proved by means of a semi-explicit solution formula 
involving the cumulated maximum 
that the 
control-to-state operator of the problem \eqref{eq:P} 
--
the so-called \emph{scalar stop operator}
--
is Hadamard directionally differentiable in a pointwise manner;
see 
\cref{th:dirdiff} below. 
In \cite{Christof2019parob}, it could
further be shown by means 
of pointwise-a.e.\ convexity properties that 
the solution mapping of the parabolic obstacle problem 
is Hadamard directionally differentiable 
as a function into all Lebesgue spaces. 
This paper also establishes that 
the directional derivatives of the solution operator 
of the parabolic obstacle problem 
are the (not necessarily unique) solutions 
of a weakly formulated auxiliary variational inequality 
analogous to \eqref{eq:dirdiffcharobstacleproblem}, see 
\cite[Theorem 4.1]{Christof2019parob}. 
Very recently, in \cite{Brokate2021}, an auxiliary problem 
for the directional derivatives of the 
scalar stop operator in \eqref{eq:P} 
has also been obtained 
by means of a careful analysis of jump 
directions and approximation
arguments.
This auxiliary problem even yields a unique characterization, 
see \cref{th:dirdiffVI} below. 

\subsection{Main result and contribution of the paper} 
The purpose of 
the present paper is to show that the recent developments
in 
\cite{Brokate2020,Brokate2015,Brokate2021} 
make it possible to prove a
strong stationarity system for the 
optimal control problem \eqref{eq:P}.
As far as we are aware, 
our analysis is the first to establish such a system
for a true evolution variational inequality. 
The result in the literature that comes 
closest to the one derived in this paper is,
at least to the best of our knowledge, 
\cite[Theorem 5.5]{Christof2019parob}
which establishes a multiplier system 
for optimal control problems governed by 
parabolic obstacle-type variational inequalities that is 
equivalent to Bouligand stationarity 
if the adjoint state enjoys additional regularity properties
-- a deficit that is caused by a mismatch between certain 
notions of capacity, see the discussion 
in \cite[section 5]{Christof2019parob}. 
In the present work, we do not require such additional 
regularity assumptions and obtain a 
strong stationarity system for \eqref{eq:P}
that is fully equivalent to the notion of 
Bouligand stationarity.
Our main result can be summarized as follows:
If $\bar u \in \Uad$ is a control of \eqref{eq:P} 
with associated state $\bar y$ 
such that the set $\R_+(\Uad - \bar u)$ is dense in 
the control space $U$,
then $\bar u$ is a Bouligand stationary point of 
\eqref{eq:P} 
(in a sense analogous to that of
\eqref{eq:Bouligandobstacle}, see \cref{def:Bouligandstationary} below)
if and only if
there exist
an adjoint state $\bar p \in BV[0,T]$ and a multiplier $\bar \mu \in G_r[0,T]^*$
such that $\bar u$, $\bar y$, $\bar p$, and $\bar \mu$
satisfy the system
\begin{equation}
\label{eq:strongstatsys-2}
\begin{gathered}
\bar p(0) = \bar p(T) = 0,
\qquad 
\bar p(t) = \bar p(t-)~\forall t \in [0,T),
\\
\bar p(t-) \in K^\ptw_{\crit}(\bar y, \bar u)(t)~\forall t \in [0,T],
\\
\left \langle \bar \mu, z \right \rangle_{G_r} \geq 0\quad \forall z \in \KK_{G_r}^{\red,\crit}(\bar y, \bar u),
\\
\int_0^T h \dd \bar p = \left \langle \partial_3 \JJ(\bar y, \bar y(T), \bar u), h \right \rangle_{U} ~\forall h \in U,
\\
-\int_0^T z \dd \bar p  
=
\left \langle \partial_1 \JJ(\bar y, \bar y(T), \bar u), z\right \rangle_{L^\infty}
+
\partial_2 \JJ(\bar y, \bar y(T), \bar u)z(T)
-
\left \langle \bar \mu, z \right \rangle_{G_r}
\\
\hspace{8.5cm}\forall z \in G_r[0,T]. 
\end{gathered}
\end{equation}
Here, 
$BV[0,T]$ denotes the space of real-valued functions 
of bounded variation on $[0,T]$; 
$G_r[0,T]$ is the space of real-valued, regulated,
and right-continuous functions 
on $[0,T]$;
$G_r[0,T]^*$ is the topological dual space of $G_r[0,T]$;
the partial derivatives of $\JJ$
are denoted by $\partial_i \JJ$, $i=1,2,3$; 
the minus in the argument of $\bar p$ denotes a left limit;
and 
$K^\ptw_{\crit}(\bar y,\bar u)(t)$, $t \in [0,T]$, and 
$\KK_{G_r}^{\red,\crit}(\bar y, \bar u)$
are suitably defined cones (see \cref{def:ptwcritcone,def:6.1}). 
For the precise statement of the above 
result, see \cref{th:main}.
Several things are noteworthy
regarding the system \eqref{eq:strongstatsys-2}:

First of all, it can be seen that the 
adjoint state $\bar p$ lacks regularity 
in comparison with the optimal state $\bar y$ 
($BV[0,T]$ instead of $CBV[0,T]$). This reduced 
regularity reflects that the directional derivatives of the control-to-state 
mapping of \eqref{eq:P} 
are not continuous in time 
and thus significantly less regular than the 
states $y$
-- a behavior that 
is completely absent in the elliptic problem \eqref{eq:optstaclecontrol}.
For details on this topic, 
see also 
\cite[section 3]{Christof2019parob} and 
\cite[Example 4.1]{Brokate2021}
which demonstrate that all types of jump discontinuities 
of the derivatives are possible 
in the situation of \eqref{eq:P} and that
the  
derivatives cannot be expected to possess, e.g., $H^{1/2}(0,T)$-regularity,
cf.\ \cite{Jarusek2003}. 

Second, one observes that
not the adjoint state $\bar p$ 
but its left limits are
contained in the critical 
cone $K^\ptw_{\crit}(\bar y, \bar u)(t)$ for all $t \in [0,T]$ in \eqref{eq:strongstatsys-2}.
As we will see below, this condition on the limiting 
behavior 
-- 
along with the left-continuity 
of $\bar p$ in the first line of \eqref{eq:strongstatsys-2}
--
arises from certain properties of the 
jumps of the directional derivatives of the 
control-to-state mapping and the fact that the 
adjoint system evolves backwards in time
(in contrast to the variational 
inequality for the directional derivatives of the 
control-to-state mapping which evolves in a forward manner). 
Note that these additional 
properties of the left limit of the adjoint state 
are not visible in stationarity systems
derived by regularization, cf.\ 
\cite{Barbu1984,Cao2016,Colombo2020,dePinho2019,Ito2010,Stefanelli2017,Wachmuth2016}. 
This shows that \eqref{eq:strongstatsys-2} 
contains information that 
is not recoverable with regularization approaches.

Lastly, it should be noted that the coupling 
between the adjoint state $\bar p$
and the partial derivative 
$\partial_3 \JJ(\bar y, \bar y(T), \bar u)$
of the objective $\JJ$ w.r.t.\ the control in \eqref{eq:strongstatsys-2}
is not as direct as in \eqref{eq:sstatobst} but involves 
an integration step. This is a consequence of the 
rate-independence of the variational inequality 
governing \eqref{eq:P} and ultimately also the reason 
for the nonstandard start- and endpoint 
conditions $\bar p(0) = \bar p(T) = 0$ for 
$\bar p$
in \eqref{eq:strongstatsys-2}. We remark that these conditions 
reflect that the partial derivative 
$\partial_2 \JJ(\bar y, \bar y(T), \bar u)$
manifests itself -- in a distributional sense -- in the 
jump of $\bar p$ at the terminal time $T$,
see the comments at the end of \cref{sec:7}. 
A similar behavior can also be observed 
in optimal control problems for parabolic PDEs, 
see \cite[section 5.5.1]{Troeltzsch2010}. 

Regarding the derivation of 
the strong stationarity system 
in \cref{th:main}, 
we would like to point out that --
even with the results of 
\cite{Brokate2020,Brokate2015,Brokate2021} 
at hand and 
even though the variational inequality in \eqref{eq:P}
is one of the simplest evolution 
variational inequalities imaginable -- the proof
of \eqref{eq:strongstatsys-2} is still quite involved. The main difficulty 
in the context of \eqref{eq:P} is that, 
due to the lack of weak-star continuity properties of 
the scalar stop operator, 
one has to discuss this problem in a 
control space $U$ whose topology is significantly 
stronger than that of $BV[0,T]$
to be able to ensure that \eqref{eq:P} 
is well posed;
see the comments in \cref{sec:5} below.
Since the directional derivatives 
of the scalar stop are only in $BV[0,T]$,
the need for such a ``small'' control space $U$ 
makes it necessary 
to employ a careful limit analysis to ensure 
that the control space is 
\emph{ample} enough to be able to arrive
at a strong stationarity system.
Compare also with the comments 
on this topic in \cite{Christof2022,Herzog2013} and 
the results in \cite{Wachmuth2014} in this context. 
In our analysis, 
we tackle this problem by 
generalizing the classical 
concept of \emph{polyhedricity}
to the time-dependent setting.
This is a density property which,
in the situation of the 
elliptic problem \eqref{eq:optstaclecontrol}, 
ensures that 
the set of critical radial 
directions $ K_{\rad}(y) \cap (u + \Delta y)^\perp$
is $H_0^1(\Omega)$-dense in 
$K_{\tan}(y) \cap (u + \Delta y)^\perp$
and which plays an important role 
in the sensitivity analysis 
of elliptic obstacle-type variational inequalities
as well as the theory of second-order optimality conditions, 
see \cite{Haraux1977,Christof2018SSC,Wachsmuth2019}. For the approximation result 
that we establish in this context and that 
we refer to as ``temporal polyhedricity'',
see \cref{theorem:tempoly}. 

We expect that \cref{theorem:tempoly},
along with the insights provided by \eqref{eq:strongstatsys-2},
is also helpful for the analysis of optimal control problems 
governed by more complicated evolution variational inequalities, 
cf.\ the problems
studied in \cite{Christof2019parob,Muench2018,Samsonyuk2019}.

\subsection{Structure
of the remainder of the paper}
We conclude this section with an
overview of the 
content and the structure 
of the remainder of the paper.

\Cref{sec:2,sec:3} are concerned with preliminaries. 
Here, we introduce the notation and the standing assumptions that 
we use throughout this work. In \cref{sec:4}, we collect 
basic results on the properties of the control-to-state mapping of \eqref{eq:P}
-- the scalar stop operator. This section also
recalls the directional differentiability results of \cite{Brokate2020,Brokate2015,Brokate2021} and 
discusses some of their consequences. \Cref{sec:5} 
addresses the solvability of \eqref{eq:P}
and introduces the concept of Bouligand stationarity 
for this problem. This section also contains an example  which shows 
that,
to be able to prove the existence of solutions for \eqref{eq:P}
by means of the direct method of the calculus of variations, 
one indeed has to consider a control space significantly smaller than 
$BV[0,T]$. In \cref{sec:6}, we prove the already mentioned 
temporal polyhedricity property for \eqref{eq:P}.
The main result of this section is \cref{theorem:tempoly}.
\Cref{sec:7} is concerned with 
the proof of the strong stationarity system 
\eqref{eq:strongstatsys-2}, see \cref{th:main}.
The \hyperref[sec:appendix]{appendix}
of the paper collects 
some results on the Kurzweil-Stieltjes integral 
that are needed for our analysis.

\section{Notation}
\label{sec:2}
Throughout this work, $T>0$ is a given and fixed number.  
We denote the space of 
real-valued continuous functions on $[0,T]$
by $C[0,T]$ and the space of real-valued 
regulated functions on $[0,T]$ (i.e., the space of all functions that are uniform limits of 
step functions, see \cite[Definition 4.1.1, Theorem 4.1.5]{Monteiro2019}) by $G[0,T]$.
We equip both $C[0,T]$ and $G[0,T]$
with the supremum norm $\|\cdot\|_\infty$.
Recall that this makes  $C[0,T]$ and $G[0,T]$ Banach spaces 
and that every $v \in G[0,T]$
possesses left and right limits,
see \cite[chapter 4]{Monteiro2019}.
Given $v \in G[0,T]$, we denote these limits 
by $v(t-)$ and $v(t+)$, respectively, with the 
usual conventions at the endpoints of $[0,T]$, i.e., 
\begin{align*}
v(t-) &:= \lim_{[0, T] \ni s \to t^-} v(s)\quad \forall t \in (0, T],\qquad v(0-) := v(0),
\\
v(t+) &:= \lim_{[0, T] \ni s \to t^+} v(s)\quad \forall t \in [0, T),\qquad v(T+) := v(T).
\end{align*}
For the left- and the right-limit function
associated with a function $v \in G[0,T]$, 
we use the symbols $v_-$ and $v_+$, 
i.e., $v_-(t) := v(t-)$ and $v_+(t) := v(t+)$
for all $t \in [0,T]$. We further define 
$G_r[0,T] := \left \{ v \in G[0,T] \colon v = v_+ \right \}$.
It is easy to check that this set of 
right-continuous regulated functions is a closed subspace of 
$(G[0,T], \|\cdot\|_\infty)$. 

The space of real-valued functions of bounded variation on $[0,T]$ is denoted by 
$BV[0,T]$. We emphasize that we do not 
consider elements of $BV[0,T]$ as equivalence classes in this paper but as classical functions $v\colon [0,T] \to \R$,
as in \cite[chapter~2]{Monteiro2019}.
For a discussion of different 
approaches to $BV[0,T]$, 
see \cite{Ambrosio2000}.
We denote the variation of a function $v\colon[0,T]\to\R$ 
by $\var(v)$,
and we define
the total variation norm on $BV[0,T]$ as 
$\|v\|_{BV} := |v(0)| + \var(v)$.
Recall that $(BV[0,T], \|\cdot \|_{BV})$ is 
a Banach space that 
is continuously embedded into $(G[0,T], \|\cdot \|_{\infty})$;
see \cite[Theorem~2.2.2]{Monteiro2019}.
We define 
$CBV[0,T] := BV[0,T] \cap C[0,T]$ and 
$BV_r[0,T] := BV[0,T] \cap G_r[0,T]$.
Note that both of these sets are 
closed subspaces of $(BV[0,T], \|\cdot \|_{BV})$.

Given a set-valued function 
$K\colon [0,T] \rightrightarrows \R$
and $0 \leq s < \tau \leq T$, 
we use the symbols $C([s,\tau];K)$ and $G([s,\tau];K)$
to denote the sets
of continuous and regulated functions $v$ on $[s,\tau]$
which satisfy $v(t) \in K(t)$ for all $t \in [s,\tau]$,
respectively. Sets $K \subset \R$ are interpreted 
as set-valued functions that are constant in time in 
this notation. We further set
$C^\infty[0,T] := \{v \in C[0,T] \colon 
\exists \tilde v \in C^\infty(\R)\, \text{s.t.}\,
v(t) = \tilde v(t) ~\forall t \in [0,T]\}$. 
For the classical Lebesgue and Sobolev 
spaces, 
we use the standard notation 
\mbox{$(L^p(0,T), \|\cdot\|_{L^p})$}
and 
$(W^{k,p}(0,T), \|\cdot\|_{W^{k,p}})$,
$1 \leq p \leq \infty$, $k \in \mathbb{N}$. 
The weak derivative of a function $v \in W^{1,p}(0,T)$
is denoted by $v' \in L^p(0,T)$.
For the topological dual of a 
normed space $(X, \|\cdot\|_X)$, we use the symbol $X^*$,
and for a dual pairing, the brackets $\langle\cdot, \cdot \rangle$
equipped with a subscript that clarifies 
the space.
A closure is denoted by $\closure(\cdot)$.
Weak, weak-star,
and strong convergence
are indicated by
$\weakly$, \smash{$\weaklystar$}, and $\to$, respectively. 
Given a set $D \subset [0,T]$, 
we define $\mathds{1}_{D}\colon [0,T] \to \{0,1\}$
to be the characteristic function of $D$, i.e., the 
function that equals 1 on $D$ and 0 everywhere else. 

\section{Main problem and standing assumptions}
\label{sec:3}
As already mentioned in the introduction, 
the aim of this paper is to study 
optimal control problems of the type
\begin{equation*}
\tag{P}
\left \{~~
	\begin{aligned}
		\text{Minimize} \quad &  \JJ(y, y(T), u)  \\
		\text{w.r.t.}\quad &y \in CBV[0, T], \quad  u \in \Uad,\\
		\text{s.t.}\quad & y = \S(u),
	\end{aligned}
\right.
\end{equation*}
where $\S$ is the
\emph{scalar stop operator}, i.e., 
the solution map
$\S\colon CBV[0,T] \to CBV[0,T]$, $u \mapsto y$, of the rate-independent 
evolution variational inequality 
\begin{equation}
\label{eq:V}
\tag{V}
\left \{~~
\begin{aligned}
&\int_0^T (v - y)\dd (y - u)\geq 0 &&\forall v \in C([0, T]; Z),
\\
&y(t) \in Z\quad \forall t \in [0, T], &&y(0) = y_0.
\end{aligned}
\right.
\end{equation}
General references for the properties of
the function $\S$ are \cite{BrokateSprekels1996,Krejci1996,Krejci1999}; some of them
will be discussed in detail in \cref{sec:4}. 

Note that, from the application point of view, 
\eqref{eq:P} can be interpreted as an optimal control problem 
for a one-dimensional sweeping process with characteristic set $Z = [-r,r]$,
i.e., a problem that aims to control the trajectory of a 
body with one degree of freedom that is placed on a slippery surface 
within $Z$ and moved (swept) by moving $Z$ back and forth, see \cite[section 1.1]{Mielke2015}. 
(In this case, the trajectory is described by the \emph{scalar play operator} $\PP(u) := u - \S(u)$
and the control function $u$ models the movement of $Z$.)
This physical interpretation, however, is mainly secondary in this work. 
We are primarily interested in the problem \eqref{eq:P} because it is the 
instationary counterpart of the optimal control problem \eqref{eq:optstaclecontrol} 
for the elliptic obstacle problem and captures the effects of ``pure'' evolution without 
any additional spatial dependencies (as present, e.g., in the parabolic obstacle problem, cf.\
\cite{Christof2018SSC}). We hope that the insights provided by our analysis are also helpful for the analysis 
of optimal control problems governed by more complicated systems arising, e.g., in the field of 
elasto-plasticity, which often involve the play and stop operator to incorporate hysteresis effects, 
cf.\ \cite{Mielke2015, Muench2018,Samsonyuk2019}.

We would like to emphasize that the integral in \eqref{eq:V} 
-- along with all other integrals appearing in 
the remainder of this paper
--
is to be understood in the sense 
of Kurzweil-Stieltjes.
For an in-depth introduction to the integration theory 
for this type of integral,
we refer to \cite{Monteiro2019}.
A collection of basic definitions, elementary properties, 
and fundamental results related to the Kurzweil-Stieltjes integral can also be found in the  \hyperref[sec:appendix]{appendix} of this paper. 
The use of the 
Kurzweil-Stieltjes integral
for the variational inequality approach to rate-independent evolutions goes back to \cite{Krejci2003,Krejci2006,KrejciLiero2009} where it was employed for the study of discontinuous
input functions $u$. 
For this kind of $u$, the integrand and the integrator (i.e., the function behind the ``$\dd$'')
in \eqref{eq:V}
usually have discontinuities at common points $t\in [0,T]$
so that the Riemann-Stieltjes integral no longer works. 
Such common discontinuities also appear naturally in the variational inequality that characterizes the directional derivatives of $\S$, cf.\ \cref{th:dirdiffVI} below.
For a treatment based on the Young integral, see \cite{KrejciLaurencot2002}. 
Alternatively, the Lebesgue-Stieltjes integral can be used as for the types of integrands and integrators appearing 
in this paper it is equivalent to the Kurzweil-Stieltjes integral, see \cite[section 6.12]{Monteiro2019}. 

However,
for this type of integral, a careful handling of statements involving ``almost everywhere'' is necessary
since the $\sigma$-algebra and the family of its sets of measure zero depend on the integrator. In particular, a singleton $\{t\}$ has nonzero measure if the integrator is discontinuous at $t$.

For the ease of reference, we collect our standing assumptions 
on the quantities in the optimal 
control problem \eqref{eq:P} and 
the variational inequality \eqref{eq:V} in:

\begin{assumption}[standing assumptions]~\label{ass:standing}
\begin{itemize}
\item $T>0$ is given and fixed.
\item $U \subset CBV[0,T]$ is a 
real vector space that is endowed with a norm $\|\cdot\|_U$
and that is continuously and densely embedded into $(C[0,T],\|\cdot\|_\infty)$.
\item $\Uad$ is a nonempty and convex subset of $U$.
\item $\JJ\colon L^\infty(0, T) \times \R \times U \to \R$ 
is a Fr\'{e}chet differentiable function 
whose partial derivative w.r.t.\ 
the first argument satisfies
$\partial_1 \JJ(y,  y(T), u) \in L^1(0, T)
$ for all $(y, u) \in CBV[0,T] \times U$. 
Here, $L^1(0,T)$ is interpreted as a subset of 
$L^\infty(0,T)^*$ via the canonical embedding 
into the bidual. 
\item 
$Z$ is an 
interval of the form $Z = [-r,r]$ with an 
arbitrary but fixed $r>0$.
\item $y_0 \in Z$ is a given and fixed starting value.
\end{itemize}
\end{assumption}

The above assumptions are 
always assumed to hold 
in the following sections, even when not explicitly mentioned. 
We remark that, to be able to prove the existence of 
solutions for \eqref{eq:P}, one requires more 
information about $\JJ$, $\Uad$, etc.\ than 
provided by \cref{ass:standing};
see \cref{cor:solex}. For the derivation of the 
strong stationarity system \eqref{eq:strongstatsys-2}, 
however, 
this is not relevant.
An example of a control space $U$ that satisfies the conditions 
in \cref{ass:standing} and that allows to prove the existence of 
minimizers for \eqref{eq:P} is the space $H^1(0,T)$, see \cref{sec:5} and the comments therein.

\section{Properties of the scalar stop operator 
\texorpdfstring{$\boldsymbol{\S}$}{S}}%
\label{sec:4}%
In this section, we collect properties of the solution map
$\S\colon u \mapsto y$ of the variational inequality 
\eqref{eq:V} that are needed 
for our analysis. We begin with 
fundamental results on the well-definedness,
monotonicity, and directional differentiability of $\S$. 

\begin{theorem}[well-definedness and Lipschitz continuity]%
\label{th:Swellposed}%
The variational inequality \eqref{eq:V} possesses a unique solution 
$\S(u) := y\in CBV[0, T]$ for all $u \in CBV[0, T]$.
For all $u\in W^{1,1}(0, T)$, 
it holds $y = \S(u) \in W^{1,1}(0, T)$ and
\begin{equation}\label{eq;Vabscont}
(v - y(t)) (y'(t) - u'(t)) \ge 0 \qquad \forall v\in Z  
\qquad\text{for a.a.}~t \in (0,T). 
\end{equation}
Further, $\S$ satisfies the Lipschitz estimate
\begin{equation}
\label{eq;LipschitzSinfty}
\|\S(u_1) - \S(u_2)\|_\infty \leq 2\|u_1 - u_2\|_\infty\qquad \forall u_1, u_2 \in CBV[0, T].
\end{equation}
\end{theorem}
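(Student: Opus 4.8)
The plan is to exploit the classical theory of the scalar stop operator, treating the three claims in turn: existence/uniqueness in $CBV[0,T]$, the pointwise-a.e.\ characterization for absolutely continuous inputs, and the Lipschitz estimate. For the first claim, I would begin with the case $u \in W^{1,1}(0,T)$, where the variational inequality \eqref{eq:V} can be rewritten in rate form: one seeks $y\in W^{1,1}(0,T)$ with $y(0)=y_0$, $y(t)\in Z$ for all $t$, and $(v-y(t))(y'(t)-u'(t))\ge 0$ for all $v\in Z$ and a.a.\ $t$. This is the standard sweeping-process/ODE-with-projection formulation, and existence and uniqueness for it follow from Moreau's theory or from an explicit construction; I would simply invoke \cite{Krejci1996,BrokateSprekels1996}. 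For general $u\in CBV[0,T]$, I would approximate $u$ uniformly by smooth functions $u_n\in C^\infty[0,T]$ (density of $C^\infty[0,T]$ in $(C[0,T],\|\cdot\|_\infty)$, and one may also arrange $u_n(0)=u_0$), set $y_n:=\S(u_n)$, and pass to the limit. The key tool here is the Lipschitz estimate \eqref{eq;LipschitzSinfty}, which I would establish first at the level of smooth inputs (see below); it shows $(y_n)$ is Cauchy in $(C[0,T],\|\cdot\|_\infty)$, hence convergent to some $y\in C[0,T]$ with $y(t)\in Z$ for all $t$ and $y(0)=y_0$. That $y$ solves \eqref{eq:V} follows by passing to the limit in the Kurzweil-Stieltjes integral inequality, using continuity of the integral under uniform convergence of integrands together with uniform bounds on the variations — this is where I expect the main technical care to be needed, since one must control $\var(y_n)$ uniformly. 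Uniform BV bounds for the stop operator are classical: $\var(\S(u))\le C\,\var(u)$ is false in general, but the correct statement $\var(y_n)\le \var(u_n) + |y_0 - u_n(0)|$ (or a similar sharp bound) holds and gives $y\in BV[0,T]$, hence $y\in CBV[0,T]$. Uniqueness in $CBV[0,T]$ follows from the Lipschitz estimate once it is proved for all $CBV$ inputs, or directly by a monotonicity/Gronwall argument testing the VI for two solutions against each other.

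For the Lipschitz estimate \eqref{eq;LipschitzSinfty}, the cleanest route is via the rate formulation on smooth (or $W^{1,1}$) inputs and then density. Given $u_1,u_2\in W^{1,1}(0,T)$ with states $y_1,y_2$, I would test the inequality for $y_1$ with $v=y_2(t)$ and the inequality for $y_2$ with $v=y_1(t)$, add, and obtain
\begin{equation*}
\frac{\dd}{\dd t}\,\tfrac12\,(y_1(t)-y_2(t))^2 = (y_1(t)-y_2(t))(y_1'(t)-y_2'(t)) \le (y_1(t)-y_2(t))(u_1'(t)-u_2'(t))
\end{equation*}
for a.a.\ $t$. Writing $w:=y_1-y_2$, $g:=u_1-u_2$, this reads $\tfrac12\frac{\dd}{\dd t}w^2 \le w\,g'$ with $w(0)=0$. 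Integrating by parts, $\tfrac12 w(t)^2 \le w(t)g(t) - \int_0^t g\,w' \,\dd s$; but a sharper and simpler bound comes from $\tfrac12 w(t)^2 = \int_0^t w w' \le \int_0^t w g' = w(t)g(t) - \int_0^t g w'$, which one can iterate, or one can argue directly: since $|y_i(t)|\le r$, any pointwise bound of the form $|w(t)|\le 2\|g\|_\infty$ is plausible, and indeed the standard result is $\|\S(u_1)-\S(u_2)\|_\infty \le 2\|u_1-u_2\|_\infty$; I would cite \cite{BrokateSprekels1996,Krejci1996} for the sharp constant $2$, or reprove it by the elementary observation that the play operator $\PP=\mathrm{Id}-\S$ is a projection-type map and $\S$ is the ``truncation'' whose modulus of continuity is governed by the diameter $2r$ of $Z$ together with $\|g\|_\infty$. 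Once \eqref{eq;LipschitzSinfty} holds on a dense subset of $CBV[0,T]$ (in the $\|\cdot\|_\infty$-topology), it extends to all of $CBV[0,T]$ by the density/limit argument of the previous paragraph, since $\S$ has already been shown to be the $\|\cdot\|_\infty$-limit of the smooth approximants.

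For the middle claim, that $u\in W^{1,1}(0,T)$ implies $y=\S(u)\in W^{1,1}(0,T)$ with the pointwise inequality \eqref{eq;Vabscont}, I would show the two formulations of \eqref{eq:V} are equivalent for absolutely continuous data. One direction: if $y$ is the $W^{1,1}$-solution of the rate form, then integrating $(v(t)-y(t))(y'(t)-u'(t))\ge 0$ in $t$ and using that the Kurzweil-Stieltjes integral reduces to the Lebesgue integral $\int_0^T(v-y)(y'-u')\,\dd t$ for absolutely continuous integrators (see the appendix results on Kurzweil-Stieltjes integrals, e.g.\ \cite[section 6.12]{Monteiro2019}) yields \eqref{eq:V} for constant test functions $v$, and a density argument in $C([0,T];Z)$ gives it for all admissible $v$. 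Conversely, if $y\in CBV[0,T]$ solves \eqref{eq:V}, one shows $y$ is in fact absolutely continuous by the Lipschitz estimate (approximating $u$ in $W^{1,1}$ and using $\|y-y_n\|_\infty\le 2\|u-u_n\|_\infty$ together with uniform $W^{1,1}$-bounds coming from $\var(y_n)\le\var(u_n)+|y_0-u_n(0)|$, which are uniformly bounded when $u_n\to u$ in $W^{1,1}$), and then the pointwise inequality is recovered by a Lebesgue-point argument. The main obstacle throughout is the careful bookkeeping with the Kurzweil-Stieltjes integral — in particular justifying passage to the limit in $\int_0^T(v-y_n)\dd(y_n-u_n)$ — but since all integrators here are continuous (elements of $CBV$), the Kurzweil-Stieltjes integral coincides with the Riemann-Stieltjes / Lebesgue-Stieltjes integral and the usual convergence theorems under uniform convergence plus uniform variation bounds apply; I would invoke the relevant lemma from the appendix for this.
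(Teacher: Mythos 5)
The paper proves this theorem purely by citation: unique solvability in $CBV[0,T]$ and the pointwise characterization \eqref{eq;Vabscont} are taken from Krej\v{c}\'i \cite[Theorem 4.1, Proposition 4.1]{Krejci1999}, and the Lipschitz bound \eqref{eq;LipschitzSinfty} from \cite[Theorem 7.1]{Krejci1999} (see also \cite{Krejci1996,BrokateSprekels1996}). Since your primary fallback at every step is exactly these classical references, your proposal reaches the statement by essentially the same route the paper takes, and the overall architecture you sketch (rate formulation for $W^{1,1}$ inputs, density and a uniform variation bound to pass to $CBV$, uniqueness from the Lipschitz estimate) is indeed how the cited theory is built.

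However, the parts you offer as self-contained arguments do not close, and you should not present them as a viable alternative without repair. For the constant-$2$ estimate, the differential inequality $\tfrac12\frac{\mathrm{d}}{\mathrm{d}t}w^2 \le w\,g'$ with $w(0)=0$ gives, after integration by parts, $\tfrac12 w(t)^2 \le w(t)g(t) - \int_0^t g\,w'\dd s$, and the last term is only controlled by $\|g\|_\infty \var(w)$, which cannot be absorbed and is not bounded in terms of $\|g\|_\infty$; no Gronwall-type iteration of this inequality produces $\|w\|_\infty \le 2\|g\|_\infty$. Your heuristic that the modulus of continuity is ``governed by the diameter $2r$ of $Z$'' is also misleading: the estimate \eqref{eq;LipschitzSinfty} is independent of $r$, and the classical proof is a first-crossing-time/contradiction argument exploiting the piecewise monotone structure of $y-u$ (cf.\ \cite[Proposition 2.3.4]{BrokateSprekels1996}), not an energy estimate. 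Two smaller points: the assertion that $\var(\S(u))\le C\var(u)$ ``is false in general'' is incorrect (one has $\var(\S(u))\le\var(u)$; harmless here since any uniform bound suffices), and the limit passage in $\int_0^T (v-y_n)\dd(y_n-u_n)$ is not covered by the appendix results, whose bounded convergence theorem keeps the integrator fixed -- you would need a convergence theorem for uniformly convergent integrators with uniformly bounded variation from \cite{Monteiro2019}, or simply avoid the issue by citing \cite{Krejci1999} for existence in $CBV[0,T]$, as the paper does.
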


\begin{proof}
Proofs of the unique solvability of \eqref{eq:V}
in $CBV[0, T]$ and of \eqref{eq;Vabscont} can be found in
\cite[Theorem 4.1, Proposition 4.1]{Krejci1999}.
The Lipschitz estimate \eqref{eq;LipschitzSinfty} 
follows from \cite[Theorem 7.1]{Krejci1999};
see also \cite[p.\ 49f.]{Krejci1996} and 
\cite[Proposition 2.3.4]{BrokateSprekels1996}.
\end{proof}

\begin{lemma}[general test functions]\label{lemma:gentest}%
Let $u\in CBV[0,T]$ and $0\le s < \tau\le T$. Then $y := \S(u)$
satisfies
\begin{equation}\label{eq:gentest}
\int_s^\tau (v-y)\dd(y-u) \ge 0 \qquad \forall v\in G([s,\tau];Z).   
\end{equation}
\end{lemma}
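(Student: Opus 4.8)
The plan is to exploit the definition of the stop operator, which provides the inequality for continuous test functions, and then to pass from continuous to regulated test functions by approximation in the supremum norm, using the continuity of the Kurzweil--Stieltjes integral with respect to uniform convergence of the integrand. First I would reduce the statement on $[s,\tau]$ to a statement of the same type: since $y = \S(u)$ solves \eqref{eq:V} on $[0,T]$, and since the Kurzweil--Stieltjes integral is additive over adjacent subintervals, it is natural to first establish \eqref{eq:gentest} for \emph{continuous} $v \in C([s,\tau];Z)$. To do this, given $v \in C([s,\tau];Z)$ one extends it to a function $\tilde v \in C([0,T];Z)$, e.g.\ by setting $\tilde v \equiv v(s)$ on $[0,s]$ and $\tilde v \equiv v(\tau)$ on $[\tau,T]$; this extension stays in $Z$ because $Z$ is an interval (indeed convex). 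Testing \eqref{eq:V} with $\tilde v$ gives $\int_0^T (\tilde v - y)\dd(y-u) \ge 0$. On $[0,s]$ and $[\tau,T]$ the integrand $\tilde v - y$ must be handled: testing \eqref{eq:V} with the specific continuous function that agrees with $y$ on $[s,\tau]$ and with $\tilde v$ outside — or, more cleanly, using that \eqref{eq:V} holds and that $y(0)=y_0$, $y(t)\in Z$ — one shows the contributions from $[0,s]$ and $[\tau,T]$ vanish or have a sign that can be absorbed. A slicker route: test \eqref{eq:V} with $w_\lambda := y + \lambda(\tilde v - y)\mathds{1}_{[s,\tau]}$ is not continuous, so instead test with continuous functions interpolating near $s$ and $\tau$ and take a limit; the boundary terms produced by the interpolation tend to zero because $y$ is continuous and the interpolation region shrinks. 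This yields \eqref{eq:gentest} for continuous $v$.

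Second, I would upgrade from continuous to regulated test functions. Let $v \in G([s,\tau];Z)$. Since $Z=[-r,r]$ is a compact interval, one can approximate $v$ uniformly on $[s,\tau]$ by continuous functions $v_n \in C([s,\tau];Z)$: indeed every regulated function is a uniform limit of step functions, and step functions with values in $[-r,r]$ can in turn be uniformly approximated by continuous $[-r,r]$-valued functions (smoothing the jumps on small intervals, clipping to stay in $Z$). So $\|v_n - v\|_\infty \to 0$ with $v_n(t)\in Z$ for all $t$. Now $y - u \in CBV[0,T] \subset BV[0,T]$, so the integrator $y-u$ has finite variation on $[s,\tau]$, and the Kurzweil--Stieltjes integral $\int_s^\tau f \dd(y-u)$ depends continuously on $f \in G([s,\tau];\R)$ with respect to $\|\cdot\|_\infty$ (this is the standard estimate $|\int_s^\tau f\dd g| \le \|f\|_\infty \var_{[s,\tau]}(g)$, which holds for the Kurzweil--Stieltjes integral against a BV integrator; it should be available in the appendix). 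Hence $\int_s^\tau (v_n - y)\dd(y-u) \to \int_s^\tau (v - y)\dd(y-u)$, and since each term on the left is $\ge 0$ by the previous step, the limit is $\ge 0$, which is \eqref{eq:gentest}.

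The main obstacle I expect is the first step — carefully disposing of the boundary contributions on $[0,s]$ and $[\tau,T]$ when restricting \eqref{eq:V} to the subinterval $[s,\tau]$. One has to be attentive because the integrator $y-u$ may have nontrivial variation near $s$ and $\tau$, and the Kurzweil--Stieltjes integral is sensitive to behavior at single points when the integrator jumps; however, here $y$ and $u$ are continuous (elements of $CBV[0,T]$), so the integrator has no jumps, which removes the most delicate point-mass issues and makes the interpolation-and-limit argument clean. The approximation step itself is routine given the uniform-convergence continuity of the integral, the only care being to keep the approximants inside $Z$, which is immediate since $Z$ is convex and closed and one may post-compose with the projection onto $[-r,r]$ (a $1$-Lipschitz map) to enforce the constraint without spoiling uniform convergence.
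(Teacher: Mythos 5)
There is a genuine gap in your second step. A regulated function with a jump cannot be uniformly approximated by continuous functions: uniform limits of continuous functions are continuous, so your claim that every $v\in G([s,\tau];Z)$ (or even every $Z$-valued step function) is a uniform limit of functions $v_n\in C([s,\tau];Z)$ is false precisely in the cases the lemma is designed for. Smoothing the jumps of a step function on small intervals and clipping to $Z$ gives pointwise convergence, but near each jump point the uniform error remains at least half the jump height, however small the smoothing interval. Since your passage to the limit relies on uniform convergence of the integrands (via the estimate $|\int_s^\tau f\dd g|\le \|f\|_\infty\var(g)$), your argument only establishes \eqref{eq:gentest} for continuous $v$, which is essentially just the defining inequality \eqref{eq:V} localized to $[s,\tau]$ and not the new content of the lemma.

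The repair is to work with \emph{pointwise} convergence of uniformly bounded integrands and the bounded convergence theorem for the Kurzweil--Stieltjes integral (\cref{th:boundedconv}), which is exactly the paper's route: one first proves \eqref{eq:gentest} for $Z$-valued step functions by approximating them pointwise by continuous $Z$-valued functions, and then for arbitrary $v\in G([0,T];Z)$ by using that step functions (not continuous functions) are dense in $G([0,T];Z)$ in the supremum norm, invoking bounded convergence again. A smaller remark on your first step: the paper reduces the subinterval case to $[s,\tau]=[0,T]$ at the level of regulated test functions simply by testing with $y+\mathds{1}_{[s,\tau]}(v-y)\in G([0,T];Z)$ and applying \cref{lemma:subintervals}; this bypasses your extension-and-interpolation treatment of the boundary contributions, which can be made rigorous for continuous $v$ (using that $\var_{[s,s+\varepsilon]}(y-u)\to 0$ by continuity of $y-u$) but becomes unnecessary once the limit argument is set up with pointwise bounded convergence from the start.
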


\begin{proof}
Since  $y + \mathds{1}_{[s,\tau]}(v-y) \in G([0,T];Z)$
for all $v \in G([s,\tau];Z)$ and due to \cref{lemma:subintervals},
it suffices to consider the case $[s,\tau] = [0,T]$.
Let $v\colon[0,T]\to Z$ be a step function of the form 
\[
v = \sum_{j=1}^N \mathds{1}_{(t_{j-1},t_j)}\zeta_j + \sum_{j=0}^N \mathds{1}_{\{t_j\}}\hat{\zeta}_j
\]
with $\zeta_j,\hat{\zeta}_j\in Z$ and $0 = t_0 < ... < t_N = T$.  
Since $v = \lim_{n\to \infty} v_n$ pointwise for suitable $v_n\in C([0,T];Z)$, \eqref{eq:gentest} 
for $v$
follows from the bounded convergence theorem, \cref{th:boundedconv}.
As step functions are dense in $G([0,T];Z)$
by \cite[Theorem 4.1.5]{Monteiro2019},
\eqref{eq:gentest} holds for arbitrary $v\in G([0,T];Z)$, again by the bounded convergence theorem.
\end{proof}

\begin{lemma}[piecewise monotonicity]\label{lemma:yumon}%
Let $u\in CBV[0,T]$
and set $y := \S(u)$.
Let $J$ be an open nonempty subinterval of $[0,T]$.
\begin{enumerate}[label=\roman*)]
\item\label{lemma:yumon:item:i}
If $J \subset \{ t\in [0,T]\colon y(t) > -r\}$, then $y-u$ is nonincreasing on $\closure{(J)}$. 
\item\label{lemma:yumon:item:ii}
If $J\subset \{ t\in [0,T] \colon y(t) < r\}$, then 
$y-u$ is nondecreasing on $\closure{(J)}$.
\end{enumerate}
\end{lemma}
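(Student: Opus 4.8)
The plan is to prove statement \ref{lemma:yumon:item:i} and then obtain \ref{lemma:yumon:item:ii} by the symmetry $u \mapsto -u$, $y \mapsto -y$, $r \mapsto r$ (which maps \eqref{eq:V} to itself and exchanges the two obstacles). So I focus on \ref{lemma:yumon:item:i}: assuming $J \subset \{t \in [0,T] \colon y(t) > -r\}$, I want $y - u$ nonincreasing on $\closure(J)$. Since $y$ is continuous, for any compact subinterval $[s,\tau] \subset \closure(J)$ there is, by compactness, some $\varepsilon > 0$ with $y(t) \geq -r + \varepsilon$ for all $t \in [s,\tau]$; it then suffices to show $y - u$ is nonincreasing on each such $[s,\tau]$, and the conclusion on all of $\closure(J)$ follows by exhausting $\closure(J)$ with such subintervals and using continuity of $y-u$ at the endpoints of $J$.

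The key step is a clever choice of test function in the localized variational inequality from \cref{lemma:gentest}. Fix $[s,\tau] \subset \closure(J)$ with $y \geq -r+\varepsilon$ on $[s,\tau]$. Given any $s \leq a < b \leq \tau$, I would like to plug into \eqref{eq:gentest} a test function $v \in G([s,\tau];Z)$ that agrees with $y$ outside $[a,b]$ and is pushed toward the lower obstacle $-r$ on $[a,b]$; concretely, something like $v := y - \mathds{1}_{[a,b]}\min(y+r,\delta)$ for small $\delta \in (0,\varepsilon]$, which stays in $Z$ because $y \geq -r+\varepsilon \geq -r + \delta$ forces $v \geq -r$ on $[a,b]$, while $v \leq y \leq r$ everywhere. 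Then $v - y = -\mathds{1}_{[a,b]}\delta$ is (for $\delta \le \varepsilon$) a constant multiple of the indicator of $[a,b]$, so \eqref{eq:gentest} reads $-\delta \int_a^b \dd(y-u) \geq 0$, i.e. $(y-u)(b) - (y-u)(a) \leq 0$ after evaluating the Kurzweil–Stieltjes integral of the integrator's increment (using that $y-u$ is continuous, so no boundary jump corrections are needed). That is exactly monotonicity of $y-u$ on $[s,\tau]$.

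I expect the main obstacle to be purely technical bookkeeping with the Kurzweil–Stieltjes integral rather than a conceptual difficulty: I must make sure the test function $v$ is genuinely an admissible regulated function with values in $Z$ (it is, being a difference of continuous functions truncated), must correctly evaluate $\int_a^b \mathds{1}_{[a,b]} \dd(y-u)$ — here the continuity of the integrator $y-u$ is what guarantees $\int_{[a,b]} \dd(y-u) = (y-u)(b)-(y-u)(a)$ with no half-jump terms, cf.\ the basic properties in the \hyperref[sec:appendix]{appendix} — and must handle the indicator carefully at the common point structure. A slightly cleaner variant avoids truncation altogether: since $y \ge -r+\varepsilon$ on $[s,\tau]$, the constant shift $v := y - \varepsilon\,\mathds{1}_{[a,b]}$ already lies in $G([s,\tau];Z)$, and \eqref{eq:gentest} immediately gives $-\varepsilon\big((y-u)(b)-(y-u)(a)\big) \ge 0$. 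This is the route I would actually write up. Finally I would note that $\closure(J)$ may include endpoints of $J$ not covered by the open-interval hypothesis, but continuity of $y-u$ lets me pass monotonicity on $J$ to $\closure(J)$ by taking limits.
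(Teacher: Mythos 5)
Your argument is correct and is essentially the paper's own proof: the paper likewise applies \cref{lemma:gentest} on $[s,\tau]$ with $s,\tau\in J$ (where compactness of $[s,\tau]\subset J$ and continuity of $y$ give $y\ge -r+\varepsilon$) and tests with the constant downward shift $v=y-\varepsilon\in G([s,\tau];Z)$, obtaining $0\le \int_s^\tau(v-y)\dd(y-u)=-\varepsilon\bigl((y-u)(\tau)-(y-u)(s)\bigr)$, then extends from $J$ to $\closure(J)$ by continuity of $y-u$ and treats \ref{lemma:yumon:item:ii} by the sign-flip symmetry (``analogous'' in the paper); your indicator variant $v=y-\varepsilon\,\mathds{1}_{[a,b]}$ is an equivalent, slightly more elaborate version of the same test. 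One small slip: the uniform bound $y\ge -r+\varepsilon$ holds on compact subintervals of $J$, not of $\closure(J)$ (an endpoint of $J$ may satisfy $y=-r$), but the continuity step at the endpoints that you already invoke repairs this.
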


\begin{proof}
We prove \ref{lemma:yumon:item:i}. (The proof of \ref{lemma:yumon:item:ii} 
is analogous.) Let $s,\tau\in J$ with $s < \tau$.
Then $y \ge - r + \varepsilon$ on $[s,\tau]$ for some $\varepsilon > 0$. 
As $v: = y - \varepsilon \in G([s,\tau];Z)$, we can apply \cref{lemma:gentest} to obtain
\[
0 \le \int_s^\tau (v-y)\dd(y-u) = - \varepsilon ((y-u)(\tau) - (y-u)(s)).
\]
Thus, $y-u$ is nonincreasing on $J$, and hence on $\closure{(J)}$ since $y-u$ is continuous.
\end{proof}

A proof of the foregoing lemma based on an explicit representation of $y-u$ can be found in \cite[section 5]{Brokate2015}.

\begin{lemma}[comparison principle]%
\label{lemma:monotonicity}%
Let $u_1, u_2 \in CBV[0, T]$
be given such that $u_2 - u_1$ 
is nondecreasing in $[0, T]$.
Then it holds $\S(u_2)(t) \geq \S(u_1)(t)$ for all $t \in [0, T]$.
\end{lemma}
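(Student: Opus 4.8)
The plan is to reduce the claim to the absolutely continuous case and then use the pointwise variational inequality~\eqref{eq;Vabscont} together with a standard Gronwall-type argument on the difference of the two states. First I would reduce to $u_1, u_2 \in W^{1,1}(0,T)$ by approximation: for general $u_1, u_2 \in CBV[0,T]$ with $u_2 - u_1$ nondecreasing, one can choose sequences $u_1^n, u_2^n \in C^\infty[0,T]$ with $u_i^n \to u_i$ uniformly and such that $u_2^n - u_1^n$ remains nondecreasing (for instance by mollifying $u_1$ and $u_2 - u_1$ separately and adding, since mollification preserves monotonicity and then adjusting endpoints). By the Lipschitz estimate~\eqref{eq;LipschitzSinfty} in \cref{th:Swellposed}, $\S(u_i^n) \to \S(u_i)$ uniformly, so the inequality $\S(u_2^n)(t) \ge \S(u_1^n)(t)$ passes to the limit for every $t$. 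Hence it suffices to treat smooth — or at least $W^{1,1}$ — inputs.

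For $u_1, u_2 \in W^{1,1}(0,T)$, set $y_i := \S(u_i) \in W^{1,1}(0,T)$ and $w := y_1 - y_2$, and consider the open set $A := \{t \in (0,T) : w(t) > 0\} = \{y_1 > y_2\}$. The goal is to show $A = \emptyset$; since $w(0) = y_0 - y_0 = 0$ and $w$ is continuous, it is enough to show that on every connected component $(a,b)$ of $A$ one has $w(a) = 0$ but $w$ cannot increase away from $0$, i.e.\ $w'(t) \le 0$ a.e.\ on $(a,b)$, which forces $w \equiv 0$ there, a contradiction. On $(a,b)$ we have $y_1(t) > y_2(t) \ge -r$, so $y_1(t) > -r$; applying~\eqref{eq;Vabscont} for $y_1$ with the test value $v = y_1(t) - \varepsilon$ (admissible for small $\varepsilon$ since $y_1(t) > -r$) gives $y_1'(t) - u_1'(t) \le 0$ a.e.\ on $(a,b)$. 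Symmetrically, $y_2(t) < y_1(t) \le r$, so $y_2(t) < r$, and~\eqref{eq;Vabscont} for $y_2$ with $v = y_2(t) + \varepsilon$ yields $y_2'(t) - u_2'(t) \ge 0$ a.e.\ on $(a,b)$. Subtracting, $w'(t) = y_1'(t) - y_2'(t) \le u_1'(t) - u_2'(t) \le 0$ a.e.\ on $(a,b)$, where the last inequality is exactly the hypothesis that $u_2 - u_1$ is nondecreasing. Since $w$ is absolutely continuous on $[a,b]$ with $w \ge 0$, $w(a) = 0$ (as $a \notin A$ or $a = 0$) and $w' \le 0$ a.e.\ on $(a,b)$, we conclude $w \equiv 0$ on $[a,b]$, contradicting $w > 0$ on $(a,b)$. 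Therefore $A = \emptyset$, i.e.\ $\S(u_2)(t) \ge \S(u_1)(t)$ for all $t$.

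I expect the main obstacle to be the reduction step: one must verify carefully that general $CBV$ inputs can be approximated by smooth (or $W^{1,1}$) inputs in such a way that the monotonicity of the \emph{difference} $u_2 - u_1$ is preserved along the approximating sequence, not just the uniform convergence of each $u_i$. The clean way is to write $u_2 = u_1 + g$ with $g \in CBV[0,T]$ nondecreasing, mollify $u_1$ and $g$ individually (mollification of a nondecreasing continuous function is nondecreasing, after a harmless restriction/extension to handle the boundary), and set $u_1^n := (u_1)_n + c_n$, $u_2^n := (u_1)_n + g_n + c_n'$ with small constants chosen to enforce $u_i^n(0)$ matching the requirements for $\S$; then $u_2^n - u_1^n = g_n + (c_n' - c_n)$ is nondecreasing. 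Once the pointwise inequality~\eqref{eq;Vabscont} and the Lipschitz bound~\eqref{eq;LipschitzSinfty} are available — both already recorded in \cref{th:Swellposed} — the rest of the argument is routine. An alternative, if one prefers to avoid the approximation entirely, is to run the monotonicity argument directly on the integrated inequality~\eqref{eq:gentest} from \cref{lemma:gentest}: on a component $(a,b)$ of $\{y_1 > y_2\}$ one tests the VI for $y_1$ with $v = y_1 - \varepsilon \in G([s,\tau];Z)$ and the VI for $y_2$ with $v = y_2 + \varepsilon$ for $[s,\tau] \subset (a,b)$, obtaining that $y_1 - u_1$ is nonincreasing and $y_2 - u_2$ nondecreasing on $[s,\tau]$, hence $w = (y_1 - u_1) - (y_2 - u_2) + (u_1 - u_2)$ is nonincreasing on $\closure{((a,b))}$; combined with $w \ge 0$ and $w$ vanishing at the left endpoint this again gives the contradiction. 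I would present the second route, since it uses only results already proved in the excerpt and needs no mollification bookkeeping.
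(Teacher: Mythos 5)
Your proposal is correct, and the route you say you would actually present (the second one) is genuinely different from the paper's proof. The paper first treats $u_1,u_2\in W^{1,1}(0,T)$ by testing the pointwise inequality \eqref{eq;Vabscont} with the Stampacchia-type functions $v=y_1(t)-\max\{0,y_1(t)-y_2(t)\}$ and $v=y_2(t)+\max\{0,y_1(t)-y_2(t)\}$, adding, and applying the chain rule for $\tfrac12\max\{0,y_1-y_2\}^2$; the general $CBV$ case is then obtained by piecewise affine interpolation (which automatically interpolates $u_2-u_1$ and so preserves its monotonicity) together with the Lipschitz estimate \eqref{eq;LipschitzSinfty}. Your first route is a close cousin of this: same reduction-plus-pointwise-VI structure, but with mollification of $u_1$ and $g=u_2-u_1$ in place of interpolation (fine, though the constants $c_n,c_n'$ are unnecessary, since $\S$ is defined on all of $CBV[0,T]$ with the fixed initial value $y_0$), and with a connected-component ODE comparison in place of Stampacchia's lemma. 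Your second route is the more interesting one: working directly in $CBV[0,T]$, on each component $(a,b)$ of $\{y_1>y_2\}$ you have $y_1>-r$ and $y_2<r$, so \cref{lemma:yumon} gives that $y_1-u_1$ is nonincreasing and $y_2-u_2$ nondecreasing on $[a,b]$; writing $y_1-y_2=(y_1-u_1)-(y_2-u_2)+(u_1-u_2)$ and using the hypothesis, $y_1-y_2$ is nonincreasing on $[a,b]$ and $\le 0$ at $a$, contradicting positivity on $(a,b)$ (the endpoint $t=T$ is covered by continuity). This avoids both the approximation step and the Stampacchia argument and uses only \cref{lemma:gentest,lemma:yumon}, which the paper establishes before this lemma, so it is self-contained and arguably simpler; what the paper's argument buys in exchange is a technique (max-function testing plus chain rule) that does not rely on the scalar ordering and interval decomposition and hence carries over to vector-valued or spatially distributed analogues where your component argument would not.
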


\begin{proof}
First, let us assume that $u_1,u_2\in W^{1,1}(0,T)$.
From \eqref{eq;Vabscont}, we obtain that $y_1 := \S(u_1)$ and $y_2 := \S(u_2)$ satisfy
\begin{equation}
\label{eq:randomeq2736}
(v  - y_i(t))(y_i'(t)- u_i'(t))\geq 0\qquad \forall v \in Z \qquad 
\text{for a.a.}~t \in (0, T) \qquad i=1,2.
\end{equation}
Testing \eqref{eq:randomeq2736} for $i=1$ with 
$v = y_1(t) - \max\{0, y_1(t) - y_2(t)\} \in Z$
and for $i=2$ with $v = y_2(t) + \max\{0, y_1(t) - y_2(t)\} \in Z$
and adding the resulting inequalities gives
\[
\max\{0, y_1  - y_2 \}\cdot (y_1' - y_2') 
 \leq \max\{0, y_1 - y_2\}\cdot (u_1' - u_2') \leq 0
 \quad \text{a.e.\ in $(0,T)$}
\]
as $u_2 - u_1$ is nondecreasing.
By a classical result of Stampacchia, see, for instance,
\cite[Lemmas~7.5 and 7.6]{GilbargTrudinger1977},
we have
\[
\ddt \frac12 \big(
 \max\{0, y_1 - y_2\}\big)^2 = 
 \max\{0, y_1 - y_2\}\cdot(y_1' - y_2')
 \qquad \text{a.e.\ in $(0,T)$.}
\]
Since $y_2(0) = y_1(0)$, 
we conclude that $\max\{0, y_1 - y_2\} \le 0$ on $[0,T]$. Thus, $y_2 \ge y_1$ on $[0,T]$ as claimed.
In the general case $u_1,u_2\in CBV[0, T]$,
we choose piecewise affine interpolants $u_1^n,u_2^n$ of $u_1,u_2$ on
partitions $\Delta_n$ of $[0,T]$ whose widths go to zero for $n \to \infty$. Since $u_2^n - u_1^n$ is nondecreasing, too,
it follows that $\S(u_2^n) \ge \S(u_1^n)$ on $[0,T]$ for all $n$. As $u_i^n \to u_i$ uniformly, by virtue 
of \eqref{eq;LipschitzSinfty}, we may pass to the limit, 
and the claim follows.
\end{proof}

\begin{theorem}[pointwise directional differentiability of $\S$]%
\label{th:dirdiff}%
The solution operator $\S\colon CBV[0, T]  \to CBV[0, T]$
of \eqref{eq:V} is pointwise directionally differentiable
in the sense that, 
for all $u, h \in CBV[0, T]$, there is a unique  $\S'(u;h) \in BV[0, T]$
satisfying 
\[
\lim_{\alpha \to 0^+} \frac{\S(u + \alpha h)(t) - \S(u)(t)}{\alpha} = \S'(u;h)(t)\qquad \forall t \in [0,T].
\]
\end{theorem}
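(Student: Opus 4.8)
The result is essentially the content of \cite{Brokate2015,Brokate2020}, and the proof I would follow is the one given there. The strategy is to reduce the directional differentiability of $\S$ to that of the \emph{cumulated maximum} by means of the semi-explicit solution formula for the scalar stop operator. Concretely, I would start from $\S(u) = u - \PP(u)$, where $\PP$ is the scalar play operator, and recall the closed-form representation of $\PP(u)$ from \cite[section~5]{Brokate2015} (see also \cite{BrokateSprekels1996,Krejci1996}): for $u \in CBV[0,T]$, the value $\PP(u)(t)$ is assembled from $u$, the constants $\pm r$, and the initial datum $y_0$ by finitely many, nested applications of the moving-window operations $w \mapsto \big(t \mapsto \sup_{s \in [\sigma, t]} w(s)\big)$ and $w \mapsto \big(t \mapsto \inf_{s \in [\sigma, t]} w(s)\big)$, $0 \le \sigma \le t$, together with pointwise minima, maxima, and additions. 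Each of these building blocks maps $CBV[0,T]$ into $CBV[0,T]$ and is nonexpansive with respect to $\|\cdot\|_\infty$, which in passing re-proves \eqref{eq;LipschitzSinfty}.

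The heart of the argument is a Danskin-type differentiation lemma for the moving maximum: given $w \in CBV[0,T]$ and a family $(w_\alpha)_{\alpha > 0} \subset CBV[0,T]$ with $w_\alpha \to w$ and $(w_\alpha - w)/\alpha \to \delta$ uniformly on $[0,T]$ and $\sup_{0 < \alpha \le 1} \var(w_\alpha) < \infty$ (and analogous data for the nondecreasing, continuous left endpoints $\sigma_\alpha(t)$), one shows that for every $t$ the difference quotient of $M_\alpha(t) := \sup_{s \in [\sigma_\alpha(t), t]} w_\alpha(s)$ converges, uniformly in $t$, to $\sup \{ \delta(s) : s \in [\sigma_0(t), t],\ w(s) = M_0(t) \}$, and that this limit belongs to $BV[0,T]$. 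The lower estimate is obtained by inserting near-optimal points $s$; the upper estimate follows from the upper semicontinuity of the maximizer set $t \mapsto \{ s \in [\sigma_0(t), t] : w(s) = M_0(t)\}$ together with a compactness argument; the bounded-variation bound and the uniformity in $t$ are more delicate and rely on the monotone structure of $t \mapsto M_0(t)$ and on the uniform variation bounds. The analogous statement holds for the moving infimum, while the pointwise minima, maxima, and additions are trivially directionally differentiable at each fixed $t$, being piecewise affine in the relevant real arguments.

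With these ingredients in hand, I would conclude by a chain-rule induction. Writing $\S = F_N \circ \dots \circ F_1$ with each $F_j$ one of the elementary operations above, and setting $v_\alpha^{(j)} := F_j \circ \dots \circ F_1(u + \alpha h)$, one proves inductively that $(v_\alpha^{(j)} - v_0^{(j)})/\alpha$ converges uniformly in $t$ to some $\delta^{(j)} \in BV[0,T]$ with $\sup_{0 < \alpha \le 1} \var(v_\alpha^{(j)}) < \infty$; the base case has the constant difference quotient $h \in CBV[0,T]$, and the inductive step applies the Danskin lemma (and its elementary analogues) to $F_{j+1}$. After $N$ steps this produces $\S'(u;h) := \delta^{(N)} \in BV[0,T]$ with pointwise --- in fact uniform --- convergence of the difference quotients along the ray $\alpha \mapsto u + \alpha h$. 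Uniqueness of $\S'(u;h)$ is immediate, since a pointwise limit, when it exists, is unique.

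\textbf{Main obstacle.} The genuinely technical parts are, first, the propagation of $BV$-regularity and of uniform variation bounds for the intermediate quantities through the composition and, second, the handling of the $t$-dependent window endpoints $\sigma_\alpha(t)$, which themselves depend on $u + \alpha h$ via the earlier stages, so that their directional derivatives enter the Danskin formula and must be controlled uniformly in $t$. Moreover, the maximizer sets occurring in the Danskin lemma may depend on $t$ in a complicated, only upper semicontinuous way; this is precisely why the upper estimate requires a compactness argument rather than differentiation of a smooth selection, and it is also the reason why $\S'(u;h)$ is in general merely of bounded variation and fails to be continuous, in contrast with $\S(u)$ itself.
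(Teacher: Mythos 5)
Your overall route is the same one the paper takes: the paper does not reprove this theorem but simply invokes \cite[Corollary 5.4, Proposition 6.3]{Brokate2015} and \cite[Theorem 2.1]{Brokate2021}, i.e., exactly the semi-explicit cumulated-maximum representation and the Danskin-type analysis of moving suprema that you outline. So in spirit your proposal matches the intended proof.

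However, as written your reconstruction contains a step that cannot work: the claims of \emph{uniform} convergence of the difference quotients. For each $\alpha>0$ the quotient $\bigl(\S(u+\alpha h)-\S(u)\bigr)/\alpha$ is a continuous function of $t$ (both terms lie in $CBV[0,T]$), so if the convergence were uniform the limit $\S'(u;h)$ would be continuous; but $\S'(u;h)$ has genuine jump discontinuities even for smooth $u,h$ (see \cite[Example 4.1]{Brokate2021}, a point the paper stresses repeatedly -- this is why the adjoint state and derivatives only live in $BV[0,T]$). The same objection applies to your Danskin lemma: the limit $\sup\{\delta(s)\colon s\in[\sigma_0(t),t],\ w(s)=M_0(t)\}$ is in general discontinuous in $t$ because the maximizer set is only upper semicontinuous, while the quotients of $M_\alpha(t)$ are continuous in $t$, so uniform-in-$t$ convergence is impossible there as well. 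This is not merely an overstatement of the conclusion: your chain-rule induction \emph{uses} uniform convergence of $(v_\alpha^{(j)}-v_0^{(j)})/\alpha$ as the hypothesis that licenses the next application of the Danskin lemma, so after the first sup/inf stage (where the limit may already be discontinuous) the inductive step can no longer be invoked and the scheme collapses. The arguments in \cite{Brokate2015,Brokate2020} work instead with pointwise (Hadamard, for each fixed $t$) limits together with uniform sup-norm and variation bounds, and they handle the nested $\sup_{s\le t}\inf_{\tau\in[s,t]}$ structure of the representation directly (the inner window depends on the outer maximization variable $s$, not on the solution through earlier ``stages''), rather than as a finite composition of one-variable window operators. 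To repair your sketch you would have to reformulate both the Danskin lemma and the induction purely in terms of pointwise convergence plus uniform bounds, and separately establish the $BV$ bound on the limit, which is where the real work in the cited papers lies.
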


\begin{proof}
See \cite[Corollary 5.4, Proposition 6.3]{Brokate2015} and also \cite[Theorem 2.1]{Brokate2021}.
\end{proof}\pagebreak

Similarly to the classical result \eqref{eq:dirdiffcharobstacleproblem}
for the obstacle problem,
the derivatives $\S'(u;h)$
in \cref{th:dirdiff} are characterized by an auxiliary 
variational inequality. To be able to state this 
inequality, we require some additional notation
from \cite{Brokate2021}.

\begin{definition}[inactive, biactive, and 
strictly active set]\label{def:biactiveetc}%
Let $u \in CBV[0,T]$ be a control with 
state $y := \S(u) \in CBV[0,T]$. 
We introduce:
\begin{itemize}
\item the inactive set:
\[
I(y) := \{t \in [0, T] \colon |y (t)| < r\},
\]
\item the biactive set associated with the upper bound of $Z$:
\[
B_+(y,u):= \{t \in [0, T] \colon y (t) = r \text{ and } \exists \varepsilon > 0 \text{ s.t. } 
 y -  u = \mathrm{const}\text{ on } [t, t + \varepsilon)\},
\]
\item the biactive set associated with the lower bound of $Z$:
\[
B_-(y,u) := \{t \in [0, T] \colon  y (t) = -r \text{ and } \exists \varepsilon > 0 \text{ s.t. } 
 y - u = \mathrm{const}\text{ on } [t, t + \varepsilon)\},
\]
\item the biactive set:
\[
B(y,u) := B_+(y,u) \cup B_-(y,u),
\]
\item the strictly active set:
\[
 A(y,u) := \{t \in [0, T) \colon | y (t)| = r \text{ and } \nexists \varepsilon > 0 \text{ s.t. } 
 y - u = \mathrm{const}\text{ on } [t, t + \varepsilon)\}.
\]
\end{itemize}
Here and in what follows, 
we use the convention $T \in B_\pm(y,u)$ in the case $y(T) = \pm r$.
\end{definition} 

\begin{definition}[radial and critical cone mapping]
\label{def:ptwcritcone}%
Given an input function $u \in CBV[0,T]$ with 
state $y := \S(u) \in CBV[0,T]$, we define:
\begin{itemize}
\item the set-valued pointwise radial cone mapping:
\[
K_\rad^\ptw(y)\colon [0, T] \rightrightarrows \R,
\qquad
K_\rad^\ptw(y)(t):=
\begin{cases}
\R & \text{ if } |y(t)| < r,
\\
(-\infty, 0] & \text{ if } y(t) = r,
\\
 [0, \infty) & \text{ if } y(t) = -r,
 \end{cases}
\]
\item the set-valued pointwise critical cone mapping:
\[
K^\ptw_{\crit}(y,u) \colon [0, T] \rightrightarrows \R,
\qquad
K^\ptw_{\crit}(y,u) (t):=
\begin{cases}
\R & \text{ if } t \in I(y),
\\
(-\infty, 0] & \text{ if } t \in B_+(y,u),
\\
 [0, \infty) & \text{ if } t \in B_-(y,u),
 \\
 \{0\} & \text{ if } t \in  A(y,u).
 \end{cases}
\]
\end{itemize}
\end{definition}

Obviously,
\[
K^\ptw_{\crit}(y,u)(t) 
\subset K_\rad^\ptw(y)(t) \quad\forall 
t \in [0,T].
\]

Note that 
a function $z \in C^\infty[0, T]$ satisfying $z(t) \in K_\rad^\ptw(y)(t)$ for all $t \in [0, T]$
is not necessarily an element of the ``global'' 
radial cone associated with \eqref{eq:V}, i.e., 
does not necessarily satisfy $y(t) + \alpha z(t) \in Z$ for all $t \in [0, T]$ for a number $\alpha > 0$
independent of $t$. 
A possible counterexample here is $r=1$, $y_0 = 0$, $T = \pi/2$, $y(t) = u(t) = \sin(t)$,
and $z(t) = \sin(2t)$. 
Indeed, for these $r$, $y_0$, $T$, $y$, $u$, and $z$, we clearly have $y = \S(u)$,
$z(T) = 0 \in K_\rad^\ptw(y)(T) = (-\infty, 0]$, and 
$z(t) \in K_\rad^\ptw(y)(t) = \R$ for all $t \in [0, T)$.
Due to the identities $y(T) = 1 = r$, 
$y'(T) = 0$, and $z'(T) = -2$, it further holds $y'(T) + \alpha z'(T) = - 2 \alpha < 0$ for all $\alpha > 0$.
This implies that, for all $\alpha > 0$, there exists $t \in [0,T]$ satisfying $y(t) + \alpha z(t) > r$.
We thus have $z(t) \in K_\rad^\ptw(y)(t)$ for all $t \in [0,T]$ but there does not exist 
$\alpha > 0$ satisfying $y(t) + \alpha z(t) \in Z$ for all $t \in [0, T]$.

\cref{prop:classicalcritical,cor:classicalcritical} 
below establish a connection between the 
pointwise critical cone mapping 
$K^\ptw_{\crit}(y,u)\colon [0,T] \rightrightarrows \R$
and the classical notion of criticality, 
that is, 
the property of being an element of the kernel of 
the multiplier that appears in the 
variational inequality \eqref{eq:V}, cf.\ 
the definition of $K_\crit(y,u)$ in \eqref{eq:dirdiffcharobstacleproblem}.
As a preparation for \cref{prop:classicalcritical,cor:classicalcritical}, 
we prove the following lemma.

\begin{lemma}\label{lemma:ortho}
Let $u \in CBV[0,T]$ be a control with state $y := \S(u) \in CBV[0,T]$ and let $z\in G[0,T]$ 
be a function satisfying $z = 0$ on $ A(y,u)$. Then
\begin{equation}\label{eq:ortho}
\int_s^\tau z\dd (y-u) = 0 \qquad \forall\;0\le s < \tau\le T.
\end{equation}
\end{lemma}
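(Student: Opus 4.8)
The plan is to reduce the statement to the piecewise monotonicity already established in \cref{lemma:yumon} together with the characterization of $y-u$ being locally constant on the biactive set. First I would observe that the complement of $A(y,u)$ in $[0,T]$ decomposes, up to the set where $|y|<r$ and the set where $y-u$ is locally constant, into pieces controlled by \cref{lemma:yumon}. Concretely, writing $g := y - u \in CBV[0,T]$, I want to show that the Kurzweil--Stieltjes measure $\dd g$ "lives" on $A(y,u)$, i.e.\ that $\int_s^\tau z \dd g = 0$ whenever $z$ vanishes on $A(y,u)$. Since both sides of \eqref{eq:ortho} are additive in the interval and since it suffices (by the density of step functions in $G[0,T]$ from \cite[Theorem 4.1.5]{Monteiro2019} together with the bounded convergence theorem \cref{th:boundedconv}, exactly as in the proof of \cref{lemma:gentest}) to treat $z = \mathds{1}_{[a,b]}$, $z=\mathds{1}_{(a,b)}$, and $z = \mathds{1}_{\{a\}}$ for points/subintervals on which $z$ can be nonzero, the whole problem collapses to showing $g$ is constant on closures of the relevant intervals and has no jump at the relevant isolated points.

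The key structural step is the following dichotomy for a point $t\notin A(y,u)$: either $|y(t)| < r$, or $|y(t)| = r$ and $y - u$ is constant on some $[t,t+\varepsilon)$ (that is how $A(y,u)$ is defined, using the convention for $t=T$). In the first case, by continuity of $y$ there is an open interval $J \ni t$ with $|y| < r$ on $J$, and then \cref{lemma:yumon}\ref{lemma:yumon:item:i} and \ref{lemma:yumon:item:ii} together force $g = y-u$ to be \emph{both} nonincreasing and nondecreasing on $\closure{(J)}$, hence constant there; in particular $\dd g$ restricted to $J$ vanishes and $g$ has no jump inside $J$. In the second case $g$ is locally constant to the right of $t$ by definition, and a symmetric argument using the other one-sided biactivity/strict-activity bookkeeping (or again \cref{lemma:yumon} applied on a one-sided neighbourhood where $|y|<r$, should such exist) handles the left side; the only genuinely delicate points are those where $y$ touches $\pm r$ and $g$ is constant on one side but a priori not the other. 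I would argue that at such a point the jump of $g$ must still vanish: a jump of $g = y-u$ at $t$ is a jump of $y$ (since $u \in CBV$ is continuous), but $y = \S(u)$ is continuous by \cref{th:Swellposed}, so $g$ has no jumps at all. That last remark actually simplifies things considerably: $g \in CBV[0,T]$, so only the "constancy on intervals" part matters and the singleton terms $\mathds{1}_{\{a\}}$ contribute nothing.

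Putting this together: the set $O := [0,T] \setminus A(y,u)$ restricted to which $z$ may be nonzero; I cover $O$ by countably many open intervals on each of which $g$ is constant — on components meeting $I(y)$ this is the two-sided \cref{lemma:yumon} argument, and on components lying in the active set it follows from the $B_\pm$-constancy clauses (local constancy on the right) combined with continuity of $g$ and a connectedness argument extending constancy across the whole component. On each such interval $(a,b)$, $\int_a^b z \dd g = z\cdot(g(b)-g(a)) = 0$ for constant $z$, hence $\int z \dd g = 0$ for step functions supported off $A(y,u)$, and then for all of $G[0,T]$ with that support property by the bounded convergence theorem. Finally, given an arbitrary $z \in G[0,T]$ with $z = 0$ on $A(y,u)$ and arbitrary $0 \le s < \tau \le T$, apply this with $z$ replaced by $\mathds{1}_{[s,\tau]} z$ (still regulated, still vanishing on $A(y,u)$) to obtain \eqref{eq:ortho}. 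The main obstacle I anticipate is the careful interval bookkeeping at boundary points of the active set — points where $y$ detaches from $\pm r$ — to be sure constancy of $g$ propagates correctly across the connected components of $[0,T]\setminus A(y,u)$; the continuity of $y=\S(u)$ is the lever that makes this clean.
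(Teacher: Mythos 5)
Your proposal is correct and is essentially the paper's own argument: both rest on decomposing $[0,T]\setminus A(y,u)$ into (at most countably many) intervals on which $y-u$ is constant -- two-sided use of \cref{lemma:yumon} on the inactive set, the defining right-constancy on $B_\pm(y,u)$, and continuity of $y-u$ to propagate constancy to the closures of the connected components -- followed by summation over the components via \cref{th:boundedconv}, \cref{lemma:subintervals}, and localization with $\mathds{1}_{[s,\tau]}z$. The only deviations are cosmetic: the extra step-function approximation layer is unnecessary (and its implicit claim that the approximating step functions can be taken to vanish on $A(y,u)$ needs a one-line truncation argument, since plain density of step functions in $G[0,T]$ does not give this), and one cannot literally cover $[0,T]\setminus A(y,u)$ by \emph{open} intervals on which $y-u$ is constant (this fails at left endpoints of components abutting the strictly active set), but your own component-based description with the connectedness/continuity argument already repairs both points.
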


\begin{proof}
Define 
$D := \left ( I(y) \cup B(y,u) \right )\setminus \{T\}$. 
The continuity of $y$ and the definitions
of $I(y)$ and $B(y,u)$ imply that, for every $t \in D$, there exists  
$\varepsilon > 0$ with $[t, t + \varepsilon) \subset D$. This entails that the set $D$
decomposes into disjoint connected components 
$\{D_i\}_{i \in \II}$
with $\II$ being finite or equal to $\mathbb{N}$
and $D_i$ being an interval with a 
nonempty interior for all $i \in \II$.
Using \cref{lemma:yumon} and again the definition of $B(y,u)$, one easily checks that,
for each $t \in D$, there exists $\varepsilon > 0$ such that 
$y - u$ is constant on $[t, t + \varepsilon)$.
Since $y-u$ is continuous, 
this implies $y-u =: c_i = \mathrm{const}$ 
on each $[a_i,b_i] := \closure{(D_i)}$.
Now $\smash{\int_0^T \mathds{1}_{\{T\}}z\dd(y-u) = 0}$ by \cref{eq:singlepointmass}.
Using this identity, 
the fact that $z=0$ holds on $ A(y,u)$,
\cref{lemma:subintervals}, and (in the case 
$\II = \mathbb{N}$) the bounded convergence theorem (\cref{th:boundedconv}), we see that
\begin{equation}
    \label{eq:randomeq2536}
\begin{aligned}
\int_0^T z\dd (y-u)
&= \int_0^T \mathds{1}_D z\dd (y-u)
= \int_0^T \sum_{i\in \II} \mathds{1}_{D_i} z\dd (y-u)
\\ &= \sum_{i\in \II} \int_0^T \mathds{1}_{D_i} z\dd (y-u)
= \sum_{i\in \II} \int_{a_i}^{b_i} \mathds{1}_{D_i} z\dd c_i
= 0.
\end{aligned}
\end{equation}
Choosing 
$\mathds{1}_{[s,\tau]}z$ instead of $z$
in \eqref{eq:randomeq2536}
yields \eqref{eq:ortho}, again due to \cref{lemma:subintervals}.
\end{proof}

\begin{proposition}[relation to the classical notion of criticality]
\label{prop:classicalcritical}%
Suppose that a control 
$u \in CBV[0,T]$ with state $y := \S(u) \in CBV[0,T]$
and a function $z\in G[0,T]$ 
satisfying $z(t) \in K_\rad^\ptw(y)(t)$
for all $t \in [0,T]$ are given.
Then it holds
\begin{equation}\label{eq:classicalcritical:item:ii}
\int_s^\tau z\dd (y-u) \ge 0 \qquad \forall\;0\le s < \tau\le T.
\end{equation}
Moreover, it is true that \vspace{0.05cm}
\begin{equation}\label{eq:classicalcritical:item:i}
z(t) \in K^\ptw_{\crit}(y,u)(t)~\forall t \in [0,T]
\quad\Rightarrow\quad
\int_s^\tau z\dd(y-u) = 0~~\forall\,0 \leq s < \tau \leq T,
\end{equation}
and, if $z$ possesses the additional regularity
$z\in G_r[0,T]$,
then we also have 
\begin{equation}\label{eq:classicalcritical:item:iii}
\int_0^T z\dd(y-u) = 0
\quad \Rightarrow\quad
z(t) \in K^\ptw_{\crit}(y,u)(t)~\forall t \in [0,T].
\end{equation}
\end{proposition}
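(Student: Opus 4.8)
The plan is to prove the three assertions in sequence, exploiting the structure of the pointwise radial and critical cones together with Lemma~\ref{lemma:ortho} and the sign information carried by $y-u$. Throughout, recall that $z(t)\in K_\rad^\ptw(y)(t)$ means: $z$ is unconstrained where $|y|<r$, satisfies $z\le 0$ where $y=r$, and satisfies $z\ge 0$ where $y=-r$.

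For \eqref{eq:classicalcritical:item:ii}, I would decompose $[0,T]$ according to the contact behavior. On the open set $\{|y|<r\}$ the function $y-u$ need not be monotone, but there $z$ is arbitrary; on the closed sets where $y=r$ or $y=-r$ I use \cref{lemma:yumon} to control the monotonicity of $y-u$. More precisely, I would like to split the set into the strictly active part $A(y,u)$, the biactive part $B(y,u)$, and the inactive part $I(y)$, and argue that the contributions of the first two are handled directly while the third requires care. The cleanest route: first observe that the set $\{t : y(t)=r\}$ decomposes into maximal closed intervals on each of which, by \cref{lemma:yumon}\ref{lemma:yumon:item:i} applied to a neighborhood, $y-u$ is nonincreasing from the right; since $z\le 0$ there, the Kurzweil--Stieltjes integral over such a piece is $\ge 0$. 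A symmetric statement holds on $\{y=-r\}$ with $z\ge 0$ and $y-u$ nondecreasing. It remains to show the contribution over $I(y)$ is zero; but on the interior of $I(y)$ \emph{both} conclusions of \cref{lemma:yumon} apply, forcing $y-u$ to be locally constant, exactly as in the proof of \cref{lemma:ortho}, so the integral over $I(y)$ vanishes. Summing, and using \cref{lemma:subintervals} and the bounded convergence theorem (\cref{th:boundedconv}) to justify the countable decomposition and the passage to subintervals $[s,\tau]$, gives \eqref{eq:classicalcritical:item:ii}.

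For \eqref{eq:classicalcritical:item:i}, the hypothesis $z(t)\in K^\ptw_{\crit}(y,u)(t)$ in particular forces $z=0$ on $A(y,u)$, so \cref{lemma:ortho} applies verbatim and yields $\int_s^\tau z\dd(y-u)=0$ for all $0\le s<\tau\le T$. This step is essentially immediate once one notices that the critical cone is exactly the radial cone with the extra constraint $z=0$ on $A(y,u)$.

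The implication \eqref{eq:classicalcritical:item:iii} is where I expect the real work. Here I am given $z\in G_r[0,T]$ with $z(t)\in K_\rad^\ptw(y)(t)$ everywhere and $\int_0^T z\dd(y-u)=0$, and I must upgrade membership from $K_\rad^\ptw$ to $K^\ptw_{\crit}$, i.e.\ show $z(t)=0$ for every $t\in A(y,u)$. The idea is that \eqref{eq:classicalcritical:item:ii} (already proved) shows the integrand $z\dd(y-u)$ is, in a suitable sense, a nonnegative measure: indeed \eqref{eq:classicalcritical:item:ii} applied to all subintervals says the Kurzweil--Stieltjes ``distribution function'' $\tau\mapsto\int_0^\tau z\dd(y-u)$ is nondecreasing, hence the associated Lebesgue--Stieltjes measure $\nu$ of $z\dd(y-u)$ is nonnegative; the global hypothesis $\nu([0,T])=0$ then forces $\nu\equiv 0$. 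Consequently the jump of $\nu$ at each $t$ vanishes, which (using that a discontinuity point of $y-u$ carries an atom) must be translated into pointwise information about $z$ at points of $A(y,u)$. The technical crux is the behavior at $t\in A(y,u)$: there $|y(t)|=r$ but $y-u$ is \emph{not} locally constant to the right, so $y-u$ genuinely moves away from the obstacle; using the sign of $z$ dictated by $K_\rad^\ptw$ and the right-continuity of $z$, I would show that if $z(t)\ne 0$ at such a point then one can find a subinterval on which $\int z\dd(y-u)>0$ strictly, contradicting $\nu\equiv 0$. Making this rigorous requires a local analysis near $t$: pick $\varepsilon>0$ small so that $z$ has constant sign on $[t,t+\varepsilon)$ (right-continuity) and $y-u$ is strictly monotone there in the direction forced by $y(t)=\pm r$ via \cref{lemma:yumon} and the definition of $A(y,u)$ — this is precisely the subtle point, since $A(y,u)$ is defined by the \emph{absence} of a constancy interval, and one must extract genuine strict monotonicity of $y-u$ from that, which may need a short argument distinguishing whether $t$ is approached from inside $\{|y|<r\}$ or is a limit of contact points. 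Once strict monotonicity of $y-u$ and constant nonzero sign of $z$ on $[t,t+\varepsilon)$ are in hand, the Kurzweil--Stieltjes integral over $[t,t+\varepsilon)$ is strictly positive, giving the desired contradiction and completing the proof.
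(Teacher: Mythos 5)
Your step (i)-analogue (the implication \eqref{eq:classicalcritical:item:i} via \cref{lemma:ortho}) is exactly the paper's argument and is fine, but the other two parts have genuine gaps. For \eqref{eq:classicalcritical:item:ii}, your decomposition of $[s,\tau]$ into $\{y=r\}$, $\{y=-r\}$ and $I(y)$ with a sum over the ``maximal closed intervals'' of the contact sets does not work: $\{y=\pm r\}$ are just closed sets whose connected components may be uncountably many singletons, and the measure $\dd(y-u)$ can charge such a set while charging no individual component, so summing over components cannot reconstruct $\int_s^\tau z\dd(y-u)$ and the bounded convergence theorem (\cref{th:boundedconv}) only legitimizes countable decompositions. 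Concretely, take $y\in CBV[0,T]$ with values in $Z$ and $\{y=r\}=C$ a Cantor set (e.g.\ $y=\max\{-r,\,r-\operatorname{dist}(\cdot,C)\}$), let $w$ be a Cantor function increasing exactly on $C$, and set $u:=y+w$; then $\S(u)=y$, $y-u=-w$, every single component of $\{y=r\}$ is a null set for $\dd(y-u)$ and $y-u$ is constant on each component of $I(y)$, so your three contributions sum to $0$, whereas for the admissible direction $z\equiv-1$ one has $\int_s^\tau z\dd(y-u)=w(\tau)-w(s)>0$ in general. The inequality \eqref{eq:classicalcritical:item:ii} is of course still true, but your route does not prove it; the paper avoids any decomposition of the contact sets by splitting $[s,\tau]$ into \emph{finitely many} subintervals on which $y>-r$ (resp.\ $y<r$) throughout, applying \cref{lemma:ortho} to the positive part $\max\{0,z\}\mathds{1}_{[s,\tau]}$ (which vanishes on $A(y,u)$ there), and using the monotonicity of $u-y$ from \cref{lemma:yumon} on the remaining part $\max\{0,-z\}$. (A measure-theoretic repair of your idea exists -- $\dd(y-u)$ restricted to the relatively open set $\{y>-r\}$, which has only countably many components, is nonpositive and $\{y=r\}\subset\{y>-r\}$ -- but that is a different argument from the one you wrote.)

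For \eqref{eq:classicalcritical:item:iii} your localization at a point $t\in A(y,u)$ with $z(t)\neq 0$ is the right idea, but you aim at an intermediate claim that is both unnecessary and in general false: membership in $A(y,u)$ only excludes constancy of $y-u$ on right-neighborhoods $[t,t+\varepsilon)$; it does \emph{not} yield strict monotonicity there ($y-u$ may be constant on subintervals of $(t,t+\varepsilon)$), so the ``short argument'' you defer to cannot deliver what you ask of it. What is needed is weaker and immediate: say $y(t)=r$; by continuity of $y$ and right-continuity of $z$ choose $\varepsilon,c>0$ with $y\in(-r,r]$ and $z\le -c$ on $[t,t+\varepsilon]$; then \cref{lemma:yumon} makes $y-u$ nonincreasing there, and non-constancy on $[t,t+\varepsilon)$ already forces the strictly negative net change $(y-u)(t+\varepsilon)<(y-u)(t)$, whence $\int_t^{t+\varepsilon}z\dd(y-u)\ge c\,\bigl((u-y)(t+\varepsilon)-(u-y)(t)\bigr)>0$, and adding the nonnegative integrals over $[0,t]$ and $[t+\varepsilon,T]$ from \eqref{eq:classicalcritical:item:ii} contradicts $\int_0^T z\dd(y-u)=0$. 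Your parenthetical about a discontinuity of $y-u$ carrying an atom is moot, since $y-u$ is continuous here; and note that your proof of \eqref{eq:classicalcritical:item:iii} leans on \eqref{eq:classicalcritical:item:ii}, so the gap described above propagates into it as well.
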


\begin{proof}
In order to prove \eqref{eq:classicalcritical:item:ii},
let $0 \leq s < \tau \leq T$ be given. 
We first assume that \mbox{$y(t) \in (-r,r]$} holds 
for all $t \in [s, \tau]$. 
By \cref{lemma:yumon}, $u - y$ is nondecreasing on $[s, \tau]$. 
Using the definition of $K_\rad^\ptw(y)$, it is easy to check that 
$\hat{z}(t) := \max\{0,z(t)\} \mathds{1}_{[s,\tau]}(t)$ satisfies
the assumptions of \cref{lemma:ortho}. Therefore,
\[
\int_s^\tau z\dd(y - u) 
= 
\int_s^\tau  \min\{0,z\} \dd(y - u)
=
\int_s^\tau \max\{0,-z\} \dd(u - y) \geq 0. 
\]
This proves \eqref{eq:classicalcritical:item:ii}
in the case $y(t) \in (-r,r]$ for all $t \in [s, \tau]$.
In the case $y(t) \in [-r,r)$  for all $t \in [s, \tau]$,
we can use the exact same arguments as above with reversed
signs to establish \eqref{eq:classicalcritical:item:ii}. 
To finally obtain \eqref{eq:classicalcritical:item:ii}
for arbitrary $[s,\tau]$, it suffices to consider 
a subdivision of $[s,\tau]$ into subintervals of the above two 
types and to use \eqref{eq:decomposeInterval}. 

The implication \eqref{eq:classicalcritical:item:i} 
follows directly from \cref{lemma:ortho} since 
$z(t)\in K^\ptw_{\crit}(y,u)(t)$ for all $t \in [0,T]$
implies $z = 0$ on $ A(y,u)$.

It remains to prove \eqref{eq:classicalcritical:item:iii}.
Since $z(t)\in K_\rad^\ptw(y)(t) \setminus K^\ptw_{\crit}(y,u)(t)$ for some $t \in [0,T]$
if and only if $z(t) \neq 0$
and $t\in  A(y,u)$, it
suffices to show that the integral on the left side of
\eqref{eq:classicalcritical:item:iii} is nonzero 
if a time $t$ with the latter property exists. 
So let $t \in  A(y,u)$ be arbitrary but fixed and suppose that 
$z(t) \neq 0$. We assume w.l.o.g.\ that $y(t) = r$. (The case $y(t) = -r$ is analogous.)
From $0 \neq z(t) \in K_\rad^\ptw(y)(t)$, we obtain that $z(t) < 0$ holds,
and from the right-continuity of $z$, the definition of $ A(y,u)$, and the continuity of $y$,
that $t \neq T$ and that there exist numbers $c, \varepsilon > 0$ such that $z(s) \leq -c $ and $y(s) \in (-r, r]$ 
holds for all
$s \in [t, t + \varepsilon] \subset [0, T]$ and 
such that $y-u$ is not constant on $[t,t+\varepsilon)$. 
By \cref{lemma:yumon}, $y-u$ is nonincreasing on $[t, t + \varepsilon]$. It thus follows that
\[
\int_{t}^{t + \varepsilon} z \dd(y-u) \geq c \int_{t}^{t+\varepsilon} \dd(u-y)
= c\left ( (u - y)(t + \varepsilon)  - (u - y)(t)\right ) > 0.
\]
Using \eqref{eq:classicalcritical:item:ii}, we conclude
\[
\int_{0}^{T} z \dd(y-u)  
= \int_{0}^{t} z \dd(y-u) + \int_{t}^{t + \varepsilon} z \dd(y-u)  + \int_{t + \varepsilon}^{T} z \dd(y-u) 
> 0.
\]
\end{proof}

\begin{corollary}\label{cor:classicalcritical}
Let $u \in CBV[0,T]$ be a control with state $y := \S(u)$
and let $z\in G_r[0,T]$ be a given function. 
Then
\begin{equation*}
 z(t) \in K^\ptw_{\crit}(y,u)(t)~\forall t \in [0,T]\quad\Leftrightarrow\quad
 \left \{~~
 \begin{aligned}
&z(t) \in K^\ptw_{\rad}(y)(t)~\forall t \in [0,T] \text{ and } 
\\
&
\int_s^\tau z\dd(y-u) = 0~~\forall\,0 \leq s < \tau \leq T. \end{aligned}\right.
\end{equation*}
\end{corollary}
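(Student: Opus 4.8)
The plan is to obtain the corollary as an immediate consequence of the various implications already collected in \cref{prop:classicalcritical}, so essentially no new work is required. For the forward implication ``$\Rightarrow$'', suppose that $z(t) \in K^\ptw_{\crit}(y,u)(t)$ for all $t \in [0,T]$. The inclusion $K^\ptw_{\crit}(y,u)(t) \subset K^\ptw_{\rad}(y)(t)$ holds for every $t$ (as noted directly after \cref{def:ptwcritcone}), so the first condition on the right-hand side is automatic. The second condition — namely $\int_s^\tau z \dd(y-u) = 0$ for all $0 \le s < \tau \le T$ — is exactly the conclusion of implication \eqref{eq:classicalcritical:item:i}, whose hypothesis is precisely $z(t) \in K^\ptw_{\crit}(y,u)(t)$ for all $t$. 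I would remark in passing that this direction uses neither the right-continuity of $z$ nor \eqref{eq:classicalcritical:item:ii}.

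For the reverse implication ``$\Leftarrow$'', I would assume that $z(t) \in K^\ptw_{\rad}(y)(t)$ for all $t \in [0,T]$ and that $\int_s^\tau z \dd(y-u) = 0$ for all $0 \le s < \tau \le T$, and specialize the integral identity to $s = 0$, $\tau = T$ to get $\int_0^T z \dd(y-u) = 0$. Since additionally $z \in G_r[0,T]$, the hypotheses of implication \eqref{eq:classicalcritical:item:iii} are met (it needs exactly $z(t) \in K^\ptw_{\rad}(y)(t)$ for all $t$ together with $z \in G_r[0,T]$), and it yields $z(t) \in K^\ptw_{\crit}(y,u)(t)$ for all $t \in [0,T]$, which closes the equivalence.

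Since the heavy lifting is all in \cref{lemma:ortho} and \cref{prop:classicalcritical}, there is no real obstacle here; the only point worth emphasizing is that the reverse direction genuinely depends on the regularity assumption $z \in G_r[0,T]$ rather than merely $z \in G[0,T]$. This is because the proof of \eqref{eq:classicalcritical:item:iii} exploits the right-continuity of $z$ at points of the strictly active set $A(y,u)$ to force $\int_0^T z \dd(y-u)$ to be strictly signed whenever $z$ fails to vanish there; dropping right-continuity would invalidate the characterization. Hence $G_r[0,T]$ is the natural regularity class for this equivalence, and I would note that the single integral identity over $[0,T]$ already suffices for ``$\Leftarrow$'', while the full family over all subintervals is what ``$\Rightarrow$'' provides.
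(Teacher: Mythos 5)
Your argument is correct and is exactly the paper's intended derivation: the corollary is stated without a separate proof precisely because, as you show, the forward direction is the inclusion $K^\ptw_{\crit}(y,u)(t)\subset K^\ptw_{\rad}(y)(t)$ together with \eqref{eq:classicalcritical:item:i}, and the reverse direction is \eqref{eq:classicalcritical:item:iii} applied with $s=0$, $\tau=T$. Your side remarks on where right-continuity enters and on the single integral over $[0,T]$ sufficing for ``$\Leftarrow$'' are also consistent with the paper's discussion following the corollary.
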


As \cref{cor:classicalcritical} shows, 
a function $z \in G_r[0,T]$ is 
``critical in the pointwise sense'' 
if and only if it takes values in
$\smash{K_\rad^\ptw(y)(t)}$ for all $t\in [0,T]$ 
and is contained in the kernel of the 
linear and continuous function
$
\smash{G[0,T] \ni v \mapsto \int_s^\tau v \dd(y-u) \in \R}
$
for all $0 \leq s < \tau \leq T$.
For elements of $G_r[0,T]$, 
the pointwise notion of criticality introduced in 
\cref{def:ptwcritcone} is thus 
closely related to the 
notion of criticality appearing in the context of the 
classical obstacle problem, cf.\ \eqref{eq:dirdiffcharobstacleproblem}. 
This relation does not exist anymore in general
when the assumption of right-continuity is dropped. 
Indeed, as the integrator $y-u$
of the integrals in \cref{prop:classicalcritical,cor:classicalcritical} does not assign mass to singletons due to the continuity
of $u$ and $y$
and \cref{eq:singlepointmass}, 
for every $t \in  A(y,u)$, the function 
$z(s) := -\sgn(y(t)) \mathds{1}_{\{t\}}(s)$
satisfies $z \in G[0, T]$, 
\smash{$z(s) \in K_\rad^\ptw(y)(s)$} for all $s \in [0, T]$,
and $\smash{\int_s^\tau z\dd(y - u) = 0}$
for all $0 \leq s < \tau \leq T$ but does not vanish 
on the strictly active set $ A(y,u)$. In all situations in which $ A(y,u)$ is nonempty,
the pointwise notion of criticality 
in \cref{def:ptwcritcone} thus differs from the 
ordinary, multiplier-based one as soon as the regularity of 
the considered functions is too poor.

We are now in the position to state the auxiliary problem 
that characterizes the pointwise directional derivatives $\S'(u;h)$
of $\S$ in the situation of \cref{th:dirdiff}.

\begin{theorem}[variational inequality for directional derivatives]%
\label{th:dirdiffVI}%
Consider a fixed control $u \in CBV[0, T]$ with 
associated state $y := \S(u) \in CBV[0, T]$.
Then, for \mbox{every} $h \in CBV[0, T]$, the pointwise directional 
derivative $\delta := \S'(u; h) \in BV[0, T]$ of $\S$ at $u$ in 
direction $h$ is the unique solution in $BV[0, T]$ of the system 
\begin{equation}
\label{eq:dirdiffVI}
\begin{gathered}
\int_0^s (z  - \delta_+)\dd(\delta - h) \geq 0\quad \forall z \in G\left ([0, s]; K^\ptw_{\crit}(y,u) \right)
\quad \forall s \in (0, T],
\\
 \delta_+(t) \in  K^\ptw_{\crit}(y,u)(t)~\forall t \in [0, T], \qquad \delta(0) = 0.
\end{gathered}
\end{equation}
Moreover, it holds $\delta(t) \in \{\delta(t+), \delta(t-)\}$ for all $t \in [0, T]$ and $\var(\delta) \leq 2\var(h)$.
\end{theorem}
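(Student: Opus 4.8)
The plan is to build the proof in three layered steps: first verify that $\delta := \S'(u;h)$ actually satisfies the system \eqref{eq:dirdiffVI}, then prove that any solution in $BV[0,T]$ is unique, and finally establish the auxiliary regularity/variation claims. For the first part I would work from the pointwise definition of $\delta$ in \cref{th:dirdiff} together with the abstract-convexity description of the stop operator: write $y_\alpha := \S(u+\alpha h)$, form the difference quotients $\delta_\alpha := (y_\alpha - y)/\alpha$, and test the variational inequality \eqref{eq:V} for $u + \alpha h$ against suitable competitors built from $y$ plus a perturbation in the critical-cone directions. The key is to choose, for a given $z \in G([0,s]; K^\ptw_{\crit}(y,u))$, test functions of the form $y_\alpha + \mathds{1}_{[0,s]}(\alpha z - (y_\alpha - y))$ (extended appropriately to $[0,T]$ via \cref{lemma:gentest}); dividing by $\alpha$, passing $\alpha \to 0^+$ using the pointwise convergence $\delta_\alpha(t) \to \delta(t)$ together with uniform $BV$-bounds ($\var(\delta_\alpha)\le 2\var(h)$, which itself follows from \eqref{eq;LipschitzSinfty}-type estimates applied to $y_\alpha$) and the bounded convergence theorem for Kurzweil--Stieltjes integrals (\cref{th:boundedconv}), yields the inequality in \eqref{eq:dirdiffVI}. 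The feasibility condition $\delta_+(t) \in K^\ptw_{\crit}(y,u)(t)$ I would extract from a careful examination of the three cases in \cref{def:ptwcritcone}: on $I(y)$ there is nothing to show; on $B_\pm(y,u)$ one uses that $y_\alpha$ stays in $Z$ and the local constancy of $y-u$ to sign the one-sided limit of $\delta_\alpha$; on $A(y,u)$ one uses \cref{lemma:yumon} plus the strict-activity definition to force the right limit to vanish. The condition $\delta(0)=0$ is immediate since $y_\alpha(0)=y(0)=y_0$.

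For uniqueness, suppose $\delta_1,\delta_2 \in BV[0,T]$ both solve \eqref{eq:dirdiffVI}. Since $\delta_{i,+}(t) \in K^\ptw_{\crit}(y,u)(t)$ for all $t$ and these cones are convex, I would test the inequality for $\delta_1$ on $[0,s]$ with the competitor $z = \delta_{2,+}$ and the inequality for $\delta_2$ with $z = \delta_{1,+}$, then add. This produces
\[
\int_0^s (\delta_{1,+} - \delta_{2,+})\dd\big((\delta_1 - \delta_2)\big) \le 0 \qquad \forall s \in (0,T].
\]
The task is then to convert this into a Gronwall-type or monotonicity argument forcing $\delta_1 = \delta_2$; here I would invoke an integration-by-parts / chain-rule identity for the Kurzweil--Stieltjes integral — namely that $\int_0^s w_+ \dd w$ controls $\tfrac12(w(s)^2 - w(0)^2)$ from below up to a nonnegative jump term (a substitute for the Stampacchia lemma used in \cref{lemma:monotonicity}) — applied to $w := \delta_1 - \delta_2$, which vanishes at $0$. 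This yields $w(s)^2 \le 0$ up to the jump contributions, and a separate argument at the (at most countably many) jump points, again using the cone membership of the one-sided limits, closes the gap.

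For the final regularity assertions, $\var(\delta) \le 2\var(h)$ follows by passing to the limit in the uniform bound $\var(\delta_\alpha) \le 2\var(h)$ (lower semicontinuity of the variation under pointwise convergence); I would either cite this directly from the difference-quotient estimates or re-derive it from the comparison principle \cref{lemma:monotonicity} applied to monotone decompositions of $h$. The inclusion $\delta(t) \in \{\delta(t+),\delta(t-)\}$ — i.e.\ $\delta$ has no "removed" values, its actual value at each point coincides with one of its one-sided limits — I would obtain by returning to the pointwise formula for $\delta_\alpha$ and the local structure of $\S$: at a point $t$ the value $y_\alpha(t)$, hence $\delta_\alpha(t)$, is governed by whether $t$ lies in the interior or at an endpoint of a plateau of $y-u$, and in each case one of the two one-sided behaviours is inherited in the limit.

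\textbf{Main obstacle.} The hard part will be the limit passage $\alpha\to 0^+$ in the variational inequality, specifically controlling the integrals $\int_0^s \delta_\alpha \dd(\delta_\alpha - h)$ where \emph{both} integrand and integrator move with $\alpha$ and may develop common discontinuities in the limit; the Kurzweil--Stieltjes bounded convergence theorem applies to the integrand but one must simultaneously handle the integrator, so the estimate has to be split — uniform variation bounds plus an Abel-summation/integration-by-parts rewriting to move one of the two dependences onto a fixed (continuous) integrator $y-u$. Getting the jump terms to behave correctly at points of $A(y,u)$, where $\delta_+$ must vanish but $\delta$ itself may not, is the delicate technical core and is precisely where the right-continuity in the formulation of \eqref{eq:dirdiffVI} (via $\delta_+$ rather than $\delta$) earns its keep.
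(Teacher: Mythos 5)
First, a point of reference: the paper does not prove this theorem at all --- its ``proof'' is a citation of \cite[Theorem 2.1]{Brokate2021} (stated there for the play operator $\PP = \mathrm{id} - \S$), where the result is obtained via explicit representation formulas, a careful analysis of jump directions, and approximation arguments. So your attempt is necessarily a different route, and it has to be judged on its own merits.

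The central step of your existence argument has a genuine gap. You propose to test \eqref{eq:V} (for $u+\alpha h$) with competitors that on $[0,s]$ equal $y + \alpha z$ for $z \in G([0,s];K^\ptw_{\crit}(y,u))$. Such competitors are in general \emph{not admissible}: test functions in \eqref{eq:V}, or in \cref{lemma:gentest}, must take values in $Z$ pointwise, and a pointwise critical direction need not be globally radial. The paper itself exhibits a counterexample right after \cref{def:ptwcritcone} ($r=1$, $y=u=\sin$, $z(t)=\sin(2t)$, where $A(y,u)=\emptyset$ so $z$ is even pointwise critical, yet $y+\alpha z \notin Z$ for every $\alpha>0$). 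Repairing this requires exactly the nontrivial approximation machinery that Brokate 2021 develops (and that motivates the ``temporal polyhedricity'' result \cref{theorem:tempoly} in this paper); it is not a routine limit passage, and your ``main obstacle'' paragraph does not identify this as the issue. Two further points are glossed over. For uniqueness in $BV[0,T]$, your add-and-subtract argument only pins down the right-continuous representative unless one uses the convention, stressed in the remark after the theorem, that $\delta_+$ in $\int_0^s(\cdot)\dd(\delta-h)$ is the right limit \emph{relative to} $[0,s]$ (so $\delta_+(s)=\delta(s)$ at the endpoint); without this the value of $\delta$ at jump points is not determined by \eqref{eq:dirdiffVI}, so your ``separate argument at the jump points'' is precisely the missing content, not a footnote. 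Finally, $\var(\delta_\alpha)\le 2\var(h)$ does not follow from the sup-norm Lipschitz estimate \eqref{eq;LipschitzSinfty}; it is a $BV$-type estimate for the stop operator that needs its own proof (in the cited references it comes out of the explicit derivative formulas).
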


\begin{proof}
This follows from \cite[Theorem 2.1]{Brokate2021}, where the result 
is stated for the scalar play operator $\PP(u) := u - \S(u)$.
\end{proof}

As $z=0$ and $z = 2\delta_+$ are admissible test functions in \eqref{eq:dirdiffVI}, this variational inequality 
implies in 
particular that
\begin{equation}
\label{eq:dirdiffeq}
\int_0^s \delta_+\dd(\delta - h) = 0
\quad \forall s \in (0, T].
\end{equation}

We remark that, 
using the inclusion  $\delta(t) \in \{\delta(t+), \delta(t-)\}$ and  \cite[Lemma 6.3.3]{Monteiro2019},
it is easy to check that
the inequality in
\eqref{eq:dirdiffVI} is satisfied by $\delta$
regardless of whether the right limit $\delta_+$ in the integral
is defined w.r.t.\ $[0, s]$ 
or w.r.t.\ $[0, T]$.
To achieve that $\delta$ is uniquely characterized by  \eqref{eq:dirdiffVI},
the definition w.r.t.\ $[0,s]$ and the corresponding 
convention for the endpoint $s$ have to be used, 
see \cite[proof of Theorem 2.1]{Brokate2021}.

Regarding the regularity properties of the
derivatives $\S'(u;h)$ in \cref{th:dirdiffVI}, 
it should be noted that $\S'(u;h)$ can
satisfy $\S'(u;h)_+ \neq \S'(u;h) \neq \S'(u;h)_-$ even when $u$
and $h$ are smooth, see \cite[Example 4.1]{Brokate2021}.
There is, however, a logic behind the 
jumps of $\S'(u;h)$ as the following corollary shows.

\begin{corollary}[direction of jumps]%
\label{corollary:dirdiffjumps}%
Consider the situation in \cref{th:dirdiffVI} for some fixed $u, h \in CBV[0, T]$.
Then, for all $t\in [0,T]$, it holds
\begin{gather}
\label{eq:dirdiffjumps1}
 (\delta(t+)-\delta(t-))\zeta \geq 0 \quad \forall \zeta \in K^\ptw_{\crit}(y,u) (t),
 \\ 
 \label{eq:dirdiffjumps2}
\delta(t+)(\delta(t+)-\delta(t-)) = \delta(t+)(\delta(t+)-\delta(t)) = 0.   
\end{gather}
In particular, if $t \in [0, T]$ is a point of discontinuity of $\delta = \S'(u;h) \in BV[0, T]$,
i.e., if $\delta(t+) \neq \delta(t-)$,
then it holds $\delta(t+) = 0$.
Moreover, we have  $\delta(0+) = \delta(0) = 0$.
\end{corollary}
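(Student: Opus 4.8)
The plan is to derive everything from the characterization in \cref{th:dirdiffVI} together with \cref{lemma:ortho}, exploiting that $z=0$ and $z=2\delta_+$ are always admissible test functions (which already gives \eqref{eq:dirdiffeq}). First I would prove \eqref{eq:dirdiffjumps2}. Fix $t \in [0,T)$ (the case $t=T$ being covered by the endpoint conventions, or trivial) and subtract the instances of \eqref{eq:dirdiffeq} for $s$ slightly larger than $t$ and $s = t$: this produces an integral of $\delta_+$ against $\dd(\delta - h)$ over a shrinking interval $[t, t+\varepsilon]$. Letting $\varepsilon \to 0^+$ and using that for a $BV$ integrator the Kurzweil–Stieltjes integral over a degenerate interval picks up exactly the jump contribution at the left endpoint (see \cref{eq:singlepointmass} and the relevant appendix lemmas), one obtains $\delta(t+)\big((\delta - h)(t+) - (\delta-h)(t)\big)=0$. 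Since $h$ is continuous, $(\delta-h)(t+)-(\delta-h)(t) = \delta(t+)-\delta(t)$, which is the second equality in \eqref{eq:dirdiffjumps2}; combining with the inclusion $\delta(t) \in \{\delta(t-),\delta(t+)\}$ from \cref{th:dirdiffVI} then yields the first equality $\delta(t+)(\delta(t+)-\delta(t-))=0$ as well (if $\delta(t)=\delta(t+)$ the factor vanishes trivially, if $\delta(t)=\delta(t-)$ it is the same statement).

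Next I would establish \eqref{eq:dirdiffjumps1}. The idea is to test the variational inequality in \eqref{eq:dirdiffVI} with functions that are supported near $t$: for $\zeta \in K^\ptw_{\crit}(y,u)(t)$, take $z = \delta_+ + \varepsilon' \mathds{1}_{[t,t+\varepsilon]}(\zeta - \delta_+(t))$ or a similar localization that stays in $G([0,s];K^\ptw_{\crit}(y,u))$ — one has to be slightly careful here because membership in the critical cone is required pointwise on the whole interval, so I would instead perturb only at the single point $t$, i.e. use $z = \delta_+ + \lambda \mathds{1}_{\{t\}}(\zeta - \delta(t+))$ with $\lambda > 0$ small, which lies in the constraint set since $K^\ptw_{\crit}(y,u)(t)$ is a cone containing $0$ and $\delta(t+) \in K^\ptw_{\crit}(y,u)(t)$. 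Plugging this into \eqref{eq:dirdiffVI} and subtracting \eqref{eq:dirdiffeq}, the resulting inequality is $\lambda\int_0^s \mathds{1}_{\{t\}}(\zeta - \delta(t+)) \dd(\delta - h) \geq 0$; evaluating the point-mass integral against the $BV$ integrator $\delta - h$ at $t$ gives $\lambda(\zeta - \delta(t+))\big(\delta(t+)-\delta(t)\big) \geq 0$ when $t < s$ and $\delta$ jumps at $t$, and a one-sided version at the right endpoint. Using \eqref{eq:dirdiffeq} again to cancel the $\delta(t+)^2$-type terms and the already-proven \eqref{eq:dirdiffjumps2}, this collapses to $\zeta\big(\delta(t+)-\delta(t)\big)\geq 0$, and then invoking $\delta(t)\in\{\delta(t-),\delta(t+)\}$ converts $\delta(t+)-\delta(t)$ into $\delta(t+)-\delta(t-)$ (it is either that or zero), which is \eqref{eq:dirdiffjumps1}.

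The two displayed consequences then follow quickly. If $\delta(t+)\neq\delta(t-)$, then \eqref{eq:dirdiffjumps1} applied with both $\zeta$ and $-\zeta$ in $K^\ptw_{\crit}(y,u)(t)$ whenever the cone is all of $\R$ forces $\delta(t+)=\delta(t-)$, a contradiction; hence at a genuine jump the cone must be one-sided or $\{0\}$, and in the one-sided cases \eqref{eq:dirdiffjumps2} gives directly $\delta(t+)=0$ unless $\delta(t+)=\delta(t)$, which combined with $\delta(t)\in\{\delta(t-),\delta(t+)\}$ and $\delta(t+)\neq\delta(t-)$ again forces $\delta(t+)=0$; the $\{0\}$ case is immediate. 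The final claim $\delta(0+)=\delta(0)=0$ is just the initial condition $\delta(0)=0$ in \eqref{eq:dirdiffVI} together with the inclusion $\delta_+(0)\in K^\ptw_{\crit}(y,u)(0)$ and \eqref{eq:dirdiffjumps2} at $t=0$. I expect the main obstacle to be the careful bookkeeping of Kurzweil–Stieltjes point-mass integrals against the $BV$ (hence possibly discontinuous) integrator $\delta - h$ — in particular making sure the test functions I perturb with genuinely lie in $G([0,s];K^\ptw_{\crit}(y,u))$ and that the right-limit $\delta_+$ is interpreted consistently with the convention fixed after \cref{th:dirdiffVI} — rather than any deep idea.
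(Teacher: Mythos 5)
There is a genuine gap, and it sits at the first equality in \eqref{eq:dirdiffjumps2}. Your attempt to deduce $\delta(t+)(\delta(t+)-\delta(t-))=0$ from the second equality fails precisely in the case $\delta(t)=\delta(t+)\neq\delta(t-)$: there $\delta(t+)-\delta(t)=0$, so the second equality is vacuous and gives no control on $\delta(t+)(\delta(t+)-\delta(t-))$; your parenthetical ``the factor vanishes trivially'' is not correct, since neither factor of $\delta(t+)(\delta(t+)-\delta(t-))$ need vanish when $\delta(t)=\delta(t+)$. This case is not peripheral -- left-discontinuous, right-continuous behavior of $\delta$ does occur, and it is exactly where the corollary has content. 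The gap propagates: in your argument for \eqref{eq:dirdiffjumps1} the correctly evaluated point mass (note that by \eqref{eq:singlepointmass} it produces $(\delta-h)(t+)-(\delta-h)(t-)=\delta(t+)-\delta(t-)$, not $\delta(t+)-\delta(t)$ as you wrote) yields $(\zeta-\delta(t+))(\delta(t+)-\delta(t-))\geq 0$, so you need the missing first equality of \eqref{eq:dirdiffjumps2} to cancel the term $\delta(t+)(\delta(t+)-\delta(t-))$; hence \eqref{eq:dirdiffjumps1}, and with it the ``in particular'' claim, is also not established at such points.

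The repair is simple and is what the paper does: since $K^\ptw_{\crit}(y,u)(t)$ is a convex cone, the functions $z=\delta_+ + \mathds{1}_{\{t\}}\zeta$ and $z=\delta_+\pm\mathds{1}_{\{t\}}\delta_+(t)$ are admissible in \eqref{eq:dirdiffVI}; combined with \eqref{eq:dirdiffeq} and \eqref{eq:singlepointmass} they give directly $\zeta(\delta(t+)-\delta(t-))\geq 0$ and $\pm\,\delta(t+)(\delta(t+)-\delta(t-))\geq 0$, i.e.\ \eqref{eq:dirdiffjumps1} and the first equality of \eqref{eq:dirdiffjumps2}, with no shrinking-interval limits and no circularity; the second equality then follows from $\delta(t)\in\{\delta(t-),\delta(t+)\}$ (that direction of the case distinction is the harmless one). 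Alternatively, if you want to keep your shrinking-interval route -- which for the right-sided limit does correctly yield $\delta(t+)(\delta(t+)-\delta(t))=0$, via the jump formula for the indefinite Kurzweil--Stieltjes integral, a fact you would need to cite since it is not among the paper's appendix results -- you must complement it with the left-sided limit of $\int_{t-\varepsilon}^{t}\delta_+\dd(\delta-h)=0$, which gives $\delta(t+)(\delta(t)-\delta(t-))=0$; adding the two identities produces the missing first equality. Once \eqref{eq:dirdiffjumps2} is in place, your concluding observations simplify: $\delta(t+)(\delta(t+)-\delta(t-))=0$ with $\delta(t+)\neq\delta(t-)$ forces $\delta(t+)=0$ immediately, with no discussion of whether the cone is one-sided, and $\delta(0+)=\delta(0)=0$ follows from \eqref{eq:dirdiffjumps2} at $t=0$ together with $\delta(0)=0$, as you say.
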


\begin{proof}
For the test function $z = \mathds{1}_{\{t\}}\zeta$ with $t\in [0,T]$ and $\zeta \in K^\ptw_{\crit}(y,u) (t)$,
we obtain from \eqref{eq:dirdiffVI}, using \eqref{eq:dirdiffeq} as well as \cref{eq:singlepointmass},
\[
0 \leq \int_0^T \mathds{1}_{\{t\}}\zeta\dd(\delta - h) 
= \zeta((\delta - h)(t+) - (\delta - h)(t-))= \zeta(\delta(t+) - \delta(t-))
\]
with the conventions $\delta(0-) = \delta(0)$ and 
$\delta(T+) = \delta(T)$. 
This proves \eqref{eq:dirdiffjumps1}. Using the test functions 
$z = \delta_+ \pm \mathds{1}_{\{t\}}\delta_+(t)$
in \eqref{eq:dirdiffVI}, we obtain analogously
\[
0 \leq \int_0^T \pm \mathds{1}_{\{t\}}\delta_+(t)\dd(\delta - h) 
= \pm \delta(t+)(\delta(t+) - \delta(t-)).
\]
Since $\delta(t) \in \{\delta(t-),\delta(t+)\}$, both equalities in \eqref{eq:dirdiffjumps2} follow. 
All other assertions are immediate consequences of \eqref{eq:dirdiffjumps1}, \eqref{eq:dirdiffjumps2},
and the initial condition $\delta(0) = 0$.
\end{proof}

We would like to point out that jump conditions 
similar to those in \cref{corollary:dirdiffjumps}
also have to be studied in order to establish the 
system \eqref{eq:dirdiffVI}, see 
\cite[section 5]{Brokate2021}. We deduce \cref{corollary:dirdiffjumps}
from \cref{th:dirdiffVI} here to simplify the presentation
and to avoid recalling major parts of the analysis in \cite{Brokate2021}.
As an immediate consequence of 
\cref{th:dirdiffVI,corollary:dirdiffjumps}, we obtain:

\begin{corollary}[variational inequality for the right limits of the derivatives]%
\label{cor:dirdiffVI+}%
Consider an arbitrary but fixed $u \in CBV[0, T]$ with state $y := \S(u) \in CBV[0, T]$. 
Then, for every $h \in CBV[0, T]$,
the right limit $\eta := \S'(u; h)_+ \in BV_r[0, T]$ of the 
pointwise directional derivative $\S'(u; h)$ of $\S$ at $u$ in 
direction $h$ is the unique solution in $BV_r[0, T]$ of the variational inequality  
\begin{equation}
\label{eq:dirdiffVI+}
\begin{gathered}
\int_0^T (z  - \eta)\dd(\eta- h) \geq 0\quad \forall z \in G\left ([0, T]; K^\ptw_{\crit}(y,u) \right),
\\
\eta(t) \in  K^\ptw_{\crit}(y,u)(t)~\forall t \in [0, T], \qquad \eta(0) = 0.
\end{gathered}
\end{equation}
Moreover, for all $s\in (0,T]$, it is true that 
\begin{equation}
\label{eq:dirdiffVI+s}
\int_0^s (z  - \eta)\dd(\eta- h) \geq 0\quad \forall z \in G\left ([0, s]; K^\ptw_{\crit}(y,u) \right).
\end{equation}
\end{corollary}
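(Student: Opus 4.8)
The plan is to verify that $\eta := \S'(u;h)_+$ solves \eqref{eq:dirdiffVI+}, to upgrade this to \eqref{eq:dirdiffVI+s} by a localization argument, and to obtain uniqueness by tracing an arbitrary solution back to \cref{th:dirdiffVI}. Write $\delta := \S'(u;h)\in BV[0,T]$, so that $\eta = \delta_+$. The easy facts come first: as $\delta\in BV[0,T]$, its right-limit function $\eta = \delta_+$ is again of bounded variation and right-continuous, hence $\eta\in BV_r[0,T]$ (with $\var(\eta)\le\var(\delta)\le 2\var(h)$); the inclusion $\eta(t)\in K^\ptw_\crit(y,u)(t)$ for all $t$ is literally the second line of \eqref{eq:dirdiffVI}; and $\eta(0) = \delta(0+) = 0$ by \cref{corollary:dirdiffjumps}.

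The heart of the matter is the inequality in \eqref{eq:dirdiffVI+}. Given $z\in G([0,T];K^\ptw_\crit(y,u))$, set $w := z - \eta\in G[0,T]$. Taking $s = T$ in \eqref{eq:dirdiffVI} -- where the endpoint convention yields $\delta_+(T) = \delta(T)$, so that its integrand coincides with $w$ -- gives $\int_0^T w\dd(\delta - h)\ge 0$, and it suffices to show $\int_0^T w\dd(\delta - \eta) = 0$, since then $\int_0^T w\dd(\eta - h) = \int_0^T w\dd(\delta - h)\ge 0$. Now $\varphi := \delta - \eta$ vanishes outside the set of $t$ with $\delta(t)\ne\delta(t+)$; by \cref{corollary:dirdiffjumps} every such $t$ lies in $(0,T)$ and satisfies $\delta(t+) = 0$, while $\varphi(t-) = \varphi(t+) = 0$ at every $t$. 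Representing $\varphi$ as a series of single-point masses (convergent uniformly and, because $\varphi\in BV[0,T]$, in variation) and applying the elementary single-point-mass formula for the Kurzweil--Stieltjes integral (cf.\ \cref{eq:singlepointmass} and the bounded convergence theorem \cref{th:boundedconv}), the contributions cancel: the two one-sided contributions at an interior point of the support are opposite, and the endpoints $0$ and $T$ lie outside the support. Hence $\int_0^T w\dd\varphi = 0$, and $\eta$ solves \eqref{eq:dirdiffVI+}.

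Next I would establish \eqref{eq:dirdiffVI+s}, simultaneously for $\eta$ and for any other solution. Let $\eta\in BV_r[0,T]$ solve \eqref{eq:dirdiffVI+}, fix $s\in(0,T]$ and $\tilde z\in G([0,s];K^\ptw_\crit(y,u))$, and take the competitor $z := \eta + \mathds{1}_{[0,s]}(\tilde z - \eta)\in G([0,T];K^\ptw_\crit(y,u))$, which equals $\tilde z$ on $[0,s]$ and $\eta$ on $(s,T]$. Then $0\le\int_0^T(z-\eta)\dd(\eta - h) = \int_0^T\mathds{1}_{[0,s]}(\tilde z - \eta)\dd(\eta - h)$, and, $\eta - h$ being right-continuous, \cref{lemma:subintervals} rewrites the right-hand side as $\int_0^s(\tilde z - \eta)\dd(\eta - h)$; this is \eqref{eq:dirdiffVI+s}. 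For uniqueness: if $\eta\in BV_r[0,T]$ solves \eqref{eq:dirdiffVI+}, then it satisfies \eqref{eq:dirdiffVI+s} by the above, and, being right-continuous (so $\eta_+ = \eta$), it meets all three requirements in \eqref{eq:dirdiffVI}; the uniqueness part of \cref{th:dirdiffVI} then forces $\eta = \S'(u;h) = \delta$, hence $\eta = \delta_+ = \S'(u;h)_+$. Thus $\S'(u;h)_+$ is the only solution of \eqref{eq:dirdiffVI+} in $BV_r[0,T]$.

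The step I expect to be most delicate is showing $\int_0^T w\dd\varphi = 0$, i.e.\ that replacing the $BV$-integrator $\delta$ by its right-continuous modification $\delta_+$ leaves the Kurzweil--Stieltjes integral unchanged; this rests entirely on the jump structure of $\S'(u;h)$ from \cref{corollary:dirdiffjumps} (that $\delta(t+) = 0$ at every discontinuity and that $\delta(0\pm) = 0$) together with the point-mass calculus of the integral. The localization is otherwise routine, the one point to watch being that \cref{lemma:subintervals} is applied with the merely right-continuous integrator $\eta - h$, for which the boundary term at $s$ vanishes precisely because of right-continuity.
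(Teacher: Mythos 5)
Your verification that $\eta:=\S'(u;h)_+$ solves \eqref{eq:dirdiffVI+}, and your localization to \eqref{eq:dirdiffVI+s}, follow essentially the paper's route: the key identity $\int_0^T w\dd(\delta_+-\delta)=0$ that you re-derive by decomposing $\delta_+-\delta$ into point masses is precisely \cref{lemma:intgzeroexcept}, which the paper invokes in one step, and your hybrid test function for the localization is the same as the paper's. (Two citation slips there: \eqref{eq:singlepointmass} and \cref{th:boundedconv} concern point masses and limits in the \emph{integrand}, not the integrator, and \cref{lemma:subintervals} covers $J=(s,\tau]$ for merely right-continuous integrators, so $J=[0,s]$ needs the extra observation that the point mass at $0$ contributes nothing; these are easily repaired, e.g., via \eqref{eq:decomposeInterval} and \eqref{eq:singlepointmass} as in the paper.)

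The uniqueness step, however, contains a genuine gap. You argue that any solution $\eta\in BV_r[0,T]$ of \eqref{eq:dirdiffVI+} satisfies, via \eqref{eq:dirdiffVI+s} and $\eta_+=\eta$, the full system \eqref{eq:dirdiffVI}, and must therefore equal $\S'(u;h)$ by the uniqueness part of \cref{th:dirdiffVI}. This proves too much: applied to $\eta=\S'(u;h)_+$ itself, which you have just shown to solve \eqref{eq:dirdiffVI+} and hence \eqref{eq:dirdiffVI+s}, the same reasoning would give $\S'(u;h)_+=\S'(u;h)$ for all $u,h\in CBV[0,T]$, i.e., the derivative would always be right-continuous --- contradicting \cite[Example~4.1]{Brokate2021} as quoted directly after \cref{th:dirdiffVI}, and erasing exactly the distinction between $\delta$ and $\delta_+$ on which \cref{corollary:dirdiffjumps}, \cref{def:6.1} and the left-limit conditions in \eqref{eq:strongstatsys-2} are built. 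The delicate point is the one the paper flags after \cref{th:dirdiffVI}: the unique characterization by \eqref{eq:dirdiffVI} hinges on the convention that, on each window $[0,s]$, the integrand and the integrator carry the \emph{function value} $\delta(s)$ at the endpoint, so the values of $\S'(u;h)$ at its jump times enter the system in an essential way, and the uniqueness assertion cannot be transferred to a right-continuous competitor whose values at those times differ. The paper therefore proves uniqueness of \eqref{eq:dirdiffVI+} directly and self-containedly: for two solutions $\eta_1,\eta_2\in BV_r[0,T]$, test the inequality for $\eta_1$ with $z=\mathds{1}_{[0,s]}\eta_2+\mathds{1}_{(s,T]}\eta_1$ and the one for $\eta_2$ with the roles swapped, add, and use \eqref{eq:decomposeInterval}, \eqref{eq:singlepointmass}, \cref{prop:partialIntegration} and $\eta_1(0)=\eta_2(0)=0$ to conclude $(\eta_1(s)-\eta_2(s))^2\le 0$ for all $s\in[0,T]$. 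Replacing your last step by this cross-testing argument closes the gap; as it stands, the appeal to the uniqueness of \eqref{eq:dirdiffVI} is not sound within the paper's framework.
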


\begin{proof}
That $\eta$
satisfies the second line of \eqref{eq:dirdiffVI+} 
follows from \cref{th:dirdiffVI} and \cref{corollary:dirdiffjumps}.
Since $\S'(u;h)\in BV[0,T]$ has at most countably many discontinuity points
by \cite[Theorem 2.3.2]{Monteiro2019}, and because $(\eta - \S'(u;h))(T) = 0$ by
convention and $(\eta - \S'(u;h))(0) = 0$ by \cref{corollary:dirdiffjumps}, it follows from \cref{lemma:intgzeroexcept}
that
 \[
 \int_0^T f\dd(\eta- \S'(u;h)) = 0\quad \forall f \in G[0, T].
 \]
If we combine this identity with \eqref{eq:dirdiffVI} 
for $s = T$ and the linearity of 
the Kurzweil-Stieltjes integral, then the 
variational inequality in \eqref{eq:dirdiffVI+}
follows immediately. 
To establish \eqref{eq:dirdiffVI+s},
it suffices to consider functions of the form 
$z := \mathds{1}_{[0,s]}\tilde z + \mathds{1}_{(s,T]}\eta$,
$s \in (0, T]$, $\smash{\tilde z \in  G\left ([0, s]; K^\ptw_{\crit}(y,u) \right )}$, in \eqref{eq:dirdiffVI+}
and to exploit \eqref{eq:decomposeInterval}
and \eqref{eq:singlepointmass}.

Suppose now that there are two 
$\eta_1, \eta_2 \in BV_r[0,T]$ satisfying \eqref{eq:dirdiffVI+}. In this case, we 
can consider functions of the form
$z := \mathds{1}_{[0,s]}\eta_2 + \mathds{1}_{(s,T]}\eta_1$
and
$z := \mathds{1}_{[0,s]}\eta_1 + \mathds{1}_{(s,T]}\eta_2$
in the inequalities for $\eta_1$
and $\eta_2$, respectively, 
and add the resulting estimates to obtain
with 
\eqref{eq:decomposeInterval} and \eqref{eq:singlepointmass}
that 
$
\int_0^s (\eta_2  - \eta_1)\dd(\eta_2 - \eta_1) \le 0
$ holds for all $s\in (0,T]$. 
Due to
\cref{prop:partialIntegration} and 
$\eta_1(0) = \eta_2(0) = 0$, this yields 
$(\eta_1(s) - \eta_2(s))^2 \leq 0$ 
for all $s \in [0,T]$. This proves 
that \eqref{eq:dirdiffVI+} 
possesses at most one 
solution in $BV_r[0,T]$.
\end{proof}

Note that the system \eqref{eq:dirdiffVI+} 
has the same structure as ``usual'' rate-independent systems 
posed in $BV_r[0,T]$, cf.\ \cite[Theorem 3.3]{Recupero2020}. 
Because of this, \eqref{eq:dirdiffVI+} is easier to work with than 
\eqref{eq:dirdiffVI}, which involves the additional varying parameter $s \in (0,T]$.

\section{First consequences for the optimal control problem (P)}\label{sec:5}

As a direct consequence of the results
for $\S$ in the last section, we 
obtain:

\begin{corollary}[existence of solutions]%
\label{cor:solex}%
Assume, in addition to the conditions 
in our standing \cref{ass:standing},
that:
\begin{itemize}
\item $(U, \|\cdot\|_U)$ is a reflexive Banach space that is compactly embedded into $C[0,T]$,
\item $\Uad$ is a closed subset of $(U, \|\cdot\|_U)$,
\item $\JJ$ is lower semicontinuous 
in the sense that, for 
all $\{(y_n, z_n, u_n) \} \subset C[0, T] \times \R \times U$
satisfying $y_n \to y$ in $C[0, T]$, $z_n \to z$ in $\R$, and $u_n \weakly u$ in $U$,
we have 
\[
\liminf_{n \to \infty} \JJ(y_n, z_n, u_n) \geq \JJ(y,z,u),
\]
\item $\JJ$ is radially unbounded in the sense that there exists a function $\rho\colon [0, \infty) \to \R$
satisfying $\rho(s) \to \infty$ for $s \to \infty$ and 
\[
\JJ(y, z, u) \geq \rho\left ( \|u\|_U \right ) \qquad \forall (y, z, u) \in C[0, T] \times \R \times U.
\]
\end{itemize}
Then the problem \eqref{eq:P} possesses at least one globally optimal control-state pair $(\bar u, \bar y)$.
\end{corollary}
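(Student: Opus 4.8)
The statement is a textbook application of the direct method of the calculus of variations, so the plan is to first argue that the feasible set is nonempty, then extract a minimizing sequence, pass to a weak limit using reflexivity and compactness, and finally verify that the limit is feasible and optimal using the continuity of $\S$ and the semicontinuity of $\JJ$. In more detail: since $\Uad$ is nonempty we may pick any $u_0 \in \Uad$; by \cref{th:Swellposed} the pair $(\S(u_0), u_0)$ is feasible for \eqref{eq:P}, so the infimum $m := \inf \JJ(\S(u),u(T),u)$ over $u \in \Uad$ is well defined. By radial unboundedness of $\JJ$ it holds $m > -\infty$ and, moreover, any minimizing sequence $\{u_n\} \subset \Uad$ satisfies $\rho(\|u_n\|_U) \le \JJ(\S(u_n), u_n(T), u_n) \to m$, so $\{\|u_n\|_U\}$ is bounded (otherwise $\rho(\|u_n\|_U)$ would be unbounded above along a subsequence, contradicting convergence to the finite value $m$).

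Next I would use reflexivity of $(U,\|\cdot\|_U)$ to extract a subsequence (not relabeled) with $u_n \weakly \bar u$ in $U$ for some $\bar u \in U$; since $\Uad$ is convex and closed in $(U,\|\cdot\|_U)$, it is weakly closed, hence $\bar u \in \Uad$. The compact embedding $U \hookrightarrow C[0,T]$ then yields $u_n \to \bar u$ in $C[0,T]$, i.e.\ $\|u_n - \bar u\|_\infty \to 0$. Setting $y_n := \S(u_n)$ and $\bar y := \S(\bar u)$, the Lipschitz estimate \eqref{eq;LipschitzSinfty} of \cref{th:Swellposed} gives $\|y_n - \bar y\|_\infty \le 2\|u_n - \bar u\|_\infty \to 0$, so $y_n \to \bar y$ in $C[0,T]$ and in particular $y_n(T) \to \bar y(T)$ in $\R$. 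By construction $\bar y = \S(\bar u) \in CBV[0,T]$, so $(\bar y, \bar u)$ is feasible for \eqref{eq:P}.

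Finally, applying the assumed lower semicontinuity of $\JJ$ with the sequences $y_n \to \bar y$ in $C[0,T]$, $y_n(T) \to \bar y(T)$ in $\R$, and $u_n \weakly \bar u$ in $U$, we obtain
\[
\JJ(\bar y, \bar y(T), \bar u) \le \liminf_{n\to\infty} \JJ(y_n, y_n(T), u_n) = m,
\]
and since $(\bar y,\bar u)$ is feasible, $\JJ(\bar y, \bar y(T), \bar u) \ge m$ as well; hence equality holds and $(\bar u,\bar y)$ is globally optimal. The only genuinely nontrivial ingredient is the Lipschitz property \eqref{eq;LipschitzSinfty}, which is what lets us upgrade uniform convergence of the controls to uniform convergence of the states without needing any weak-star compactness of $\S$ (indeed such compactness fails, as emphasized later in the paper); everything else is a routine compactness-and-semicontinuity argument, so I expect no real obstacle here.
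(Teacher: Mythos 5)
Your proposal is correct and follows exactly the route the paper intends, since its proof is just the remark ``direct method of the calculus of variations plus the Lipschitz estimate \eqref{eq;LipschitzSinfty}'', which your argument spells out in full (boundedness of a minimizing sequence via $\rho$, weak compactness and weak closedness of the convex closed set $\Uad$, compact embedding into $C[0,T]$, Lipschitz continuity of $\S$, then lower semicontinuity of $\JJ$). The only cosmetic point is your claim that $m>-\infty$ follows directly from radial unboundedness (the assumptions do not force $\rho$ to be bounded below on bounded sets); this is harmless because your boundedness argument does not need finiteness of $m$, and $m>-\infty$ then follows a posteriori from the lower semicontinuity step since $\JJ$ is real-valued.
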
\pagebreak

\begin{proof}
This follows straightforwardly 
from the direct method of the calculus of variations
and the Lipschitz continuity property in 
\eqref{eq;LipschitzSinfty}.
\end{proof}

A prototypical example of a space $U$ satisfying the 
conditions in \cref{cor:solex} is $H^1(0,T)$. 
We would like 
to point out 
that it is, in general, not possible to use the 
direct method of the calculus of variations in the 
situation of \cref{cor:solex} if 
the control space $U$ is not compactly embedded 
into $C[0,T]$ and 
if the convergence
$u_n \weakly u$ in $U$
only implies \smash{$u_n \weaklystar u$} in $BV[0,T]$.
To see this, suppose that 
$r = y_0 = 1$, that $T = 2$, and 
that $\varphi \in C^\infty(\R)$ is a function that 
is identical zero in $\R \setminus (0,2)$, equal to 2 at $t=1$,
monotonously increasing in $[0,1]$, and monotonously 
decreasing in $[1,2]$.
For such $r$, $y_0$, $T$, and $\varphi$,
it is easy to check that the controls 
$u_n(t) := \varphi(n t)$, $t \in [0,T]$, $n \in \mathbb{N}$,
satisfy $u_n \in C^\infty[0,T]$, 
$\|u_n\|_{BV} = \var(u_n) = 4$, 
and
$\S(u_n) = \mathds{1}_{[0, 1/n)} + \mathds{1}_{[1/n, T]} (u_n - 1)$
for all $n$
as well as $u_n(t) \to 0$ for all $t \in [0,T]$ and $n \to \infty$.
In particular, we have
$\|\S(u_n)\|_{BV} = 1 + \var(\S(u_n)) = 3$
for all $n$
and $\S(u_n)(t) \to \mathds{1}_{\{0\}}(t) - \mathds{1}_{(0, T]}(t)$
for all $t \in [0,T]$ and $n \to \infty$. 
In view of \cite[Proposition~3.13]{Ambrosio2000},
this yields
\smash{$C^\infty[0,T] \ni u_n \weaklystar 0$} 
and 
\smash{$CBV[0,T] \ni \S(u_n) \weaklystar 
\mathds{1}_{\{0\}} - \mathds{1}_{(0, T]} \neq \mathds{1}_{[0, T]} = \S(0)$}
in $BV[0,T]$.
The map $\S$ is thus not continuous 
w.r.t.\ weak-star convergence in $BV[0,T]$ -- even along sequences 
of smooth functions -- and we may conclude that
it is indeed not possible to apply the direct method of the 
calculus of variations to establish the solvability of 
\eqref{eq:P} if the space $U$ only provides
weak-star convergence in $BV[0,T]$ for minimizing 
sequences. We remark that
the compact embedding $U \hookrightarrow C[0,T]$
needed in \cref{cor:solex} 
significantly complicates the 
derivation of the strong stationarity 
system \eqref{eq:strongstatsys-2}
since it makes it impossible to 
find sequences that converge weakly or strongly in $U$
to the discontinuous directional derivatives $\S'(u;h)$. 
In fact, this difficulty already arises due to the 
embedding $U \hookrightarrow C[0,T]$ in \cref{ass:standing}.
We will circumvent this problem in \cref{sec:6} by means of a 
careful analysis of pointwise limits. The next 
corollary is concerned with the Bouligand stationarity condition 
that arises from \cref{th:dirdiff}.

\begin{corollary}[Bouligand stationarity condition]%
Suppose that $\bar u \in \Uad$ is a locally optimal control of  \eqref{eq:P} with associated state $\bar y := \S(\bar u)$.
Then it holds 
\begin{equation}
\label{eq:Bouligand}
\begin{aligned}
\left \langle \partial_1 \JJ(\bar y, \bar y(T), \bar u), \S'(\bar u; h) \right \rangle_{L^\infty}
&+
\partial_2 \JJ(\bar y, \bar y(T), \bar u)\S'(\bar u; h)(T)
\\
&+
\left \langle \partial_3 \JJ(\bar y, \bar y(T), \bar u), h \right \rangle_{U}
\geq 
0
\quad 
\forall h \in \R_+(\Uad - \bar u). 
\end{aligned}
\end{equation}
Here, $ \partial_1 \JJ(\bar y, \bar y(T), \bar u)\in L^1(0, T)$,  
$\partial_2 \JJ(\bar y, \bar y(T), \bar u) \in \R$, 
and $\partial_3 \JJ(\bar y, \bar y(T), \bar u) \in U^*$
are the partial Fr\'{e}chet derivatives of the objective function
$\JJ\colon L^\infty(0, T) \times \R \times U \to \R$.
\end{corollary}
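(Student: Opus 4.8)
The plan is to establish \eqref{eq:Bouligand} by the classical variational argument, namely by differentiating the reduced objective along admissible rays at the local minimizer $\bar u$. First I would fix $h \in \R_+(\Uad - \bar u)$ and write $h = \mu(w - \bar u)$ with $\mu \ge 0$ and $w \in \Uad$. If $\mu = 0$, then $h = 0$ and \eqref{eq:Bouligand} holds trivially (as $0 \ge 0$), so assume $\mu > 0$. For every $\alpha \in (0, 1/\mu]$ the point $\bar u + \alpha h = (1 - \alpha\mu)\bar u + \alpha\mu w$ lies in $\Uad$ by convexity of $\Uad$, and since $\|\alpha h\|_U \to 0$ as $\alpha \to 0^+$, for all sufficiently small $\alpha > 0$ the control $\bar u + \alpha h$ is an admissible competitor contained in the neighborhood of $\bar u$ in which $\bar u$ is optimal. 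Abbreviating $y_\alpha := \S(\bar u + \alpha h)$ and $\bar y := \S(\bar u)$, local optimality of $\bar u$ then yields $\JJ(y_\alpha, y_\alpha(T), \bar u + \alpha h) \ge \JJ(\bar y, \bar y(T), \bar u)$ for all small $\alpha > 0$.

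The core of the proof is to divide this inequality by $\alpha$ and pass to the limit $\alpha \to 0^+$. I would use the Lipschitz estimate \eqref{eq;LipschitzSinfty}, which gives $\|y_\alpha - \bar y\|_\infty \le 2\alpha\|h\|_\infty$; hence the perturbation $(y_\alpha - \bar y,\, y_\alpha(T) - \bar y(T),\, \alpha h)$ has norm $O(\alpha)$ in $L^\infty(0,T) \times \R \times U$, so the Fr\'{e}chet remainder of $\JJ$ at $(\bar y, \bar y(T), \bar u)$ is $o(\alpha)$ and the difference quotient of $\JJ$ equals
\begin{multline*}
\big\langle \partial_1 \JJ(\bar y, \bar y(T), \bar u),\, \tfrac{1}{\alpha}(y_\alpha - \bar y)\big\rangle_{L^\infty}
+ \partial_2 \JJ(\bar y, \bar y(T), \bar u)\, \tfrac{1}{\alpha}\big(y_\alpha(T) - \bar y(T)\big)
\\
+ \big\langle \partial_3 \JJ(\bar y, \bar y(T), \bar u),\, h\big\rangle_{U} + o(1).
\end{multline*}
By \cref{th:dirdiff}, $\tfrac{1}{\alpha}(y_\alpha - \bar y) \to \S'(\bar u;h)$ pointwise on $[0,T]$, in particular at $t = T$, so the second term converges to $\partial_2 \JJ(\bar y, \bar y(T), \bar u)\,\S'(\bar u;h)(T)$. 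For the first term, the difference quotients $\tfrac{1}{\alpha}(y_\alpha - \bar y)$ are bounded by $2\|h\|_\infty$ uniformly in $\alpha$, and since $\partial_1 \JJ(\bar y, \bar y(T), \bar u) \in L^1(0,T)$ by \cref{ass:standing}, the dominated convergence theorem (with dominating function $2\|h\|_\infty\,|\partial_1 \JJ(\bar y, \bar y(T), \bar u)|$) yields convergence to $\langle \partial_1 \JJ(\bar y, \bar y(T), \bar u), \S'(\bar u;h)\rangle_{L^\infty}$. The third term does not depend on $\alpha$. Passing to the limit in the optimality inequality divided by $\alpha > 0$ then gives exactly \eqref{eq:Bouligand}.

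The one step that requires care is the estimate that the Fr\'{e}chet remainder of $\JJ$ is $o(\alpha)$: this rests crucially on the $L^\infty$-Lipschitz bound \eqref{eq;LipschitzSinfty}, which guarantees that the state perturbation $y_\alpha - \bar y$ is of order $\alpha$ in the topology ($L^\infty(0,T)$) in which $\JJ$ is differentiable in its first argument; without such a uniform bound the expansion argument would not close. All remaining points are routine: admissibility of $\bar u + \alpha h$ is immediate from the convexity of $\Uad$, and the interchange of the limit with the dual pairing in the $\partial_1 \JJ$-term is a direct application of the dominated convergence theorem using the pointwise convergence from \cref{th:dirdiff} together with the uniform $L^\infty$-bound on the difference quotients. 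Note also that everything is consistent with the positive homogeneity of $h \mapsto \S'(\bar u;h)$, so it would in fact suffice to treat the directions $h = w - \bar u$ with $w \in \Uad$ and invoke scaling; I nevertheless would carry out the argument directly for all $h \in \R_+(\Uad - \bar u)$.
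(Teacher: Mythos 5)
Your argument is correct and is precisely the ``standard lines'' argument the paper invokes by reference: convexity of $\Uad$ for admissibility of $\bar u + \alpha h$, the Fr\'{e}chet differentiability of $\JJ$ together with the $L^\infty$-Lipschitz estimate \eqref{eq;LipschitzSinfty} to control the remainder, the pointwise directional differentiability from \cref{th:dirdiff}, and dominated convergence via the $L^1$-regularity of $\partial_1\JJ$. Since the paper's proof consists of exactly these ingredients (citing \cite{ChristofPhd2018,Herzog2013} for the details you spell out), no further comment is needed.
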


\begin{proof}
This 
follows along standard lines from the convexity of 
$\Uad$,
the Fr\'{e}chet differentiability of $\JJ$, 
 \cref{th:dirdiff}, 
the Lipschitz estimate \eqref{eq;LipschitzSinfty},
and
the $L^1$-regularity of $ \partial_1 \JJ(\bar y, \bar y(T), \bar u)$. See, e.g., 
\cite[Proposition 6.1.2]{ChristofPhd2018}
or 
\cite[section 3]{Herzog2013}.
\end{proof}

The last result motivates:

\begin{definition}[Bouligand stationary point]%
\label{def:Bouligandstationary}%
A control $\bar u \in \Uad$ with 
associated state $\bar y := \S(\bar u)$
is called a Bouligand stationary point of \eqref{eq:P}
if $(\bar u,\bar y)$ satisfies \eqref{eq:Bouligand}.
\end{definition}

Due to its implicit nature, the Bouligand stationarity condition \eqref{eq:Bouligand} 
is typically not very helpful in practice.
This is one of the main motivations for the derivation of 
strong stationarity systems. 
To establish such a system for \eqref{eq:P}, we study:

\section{Temporal polyhedricity properties}
\label{sec:6}
Throughout this section, 
we assume that an arbitrary but fixed $u \in CBV[0,T]$
with state $y := \S(u) \in CBV[0,T]$ is given.
For these $u$ and $y$, we introduce:

\begin{definition}[reduced critical cone and smooth critical radial directions]%
\label{def:6.1}
We define the reduced critical cone in $G_r[0, T]$ associated with
$(y,u)$ to be the set 
\begin{equation*}
\begin{aligned}
\KK_{G_r}^{\red,\crit}(y,u)  
:=
\big \{
z \in G_r[0, T]
\colon
&z(t) \in K_{\crit}^{\ptw}(y,u)(t)\,\forall t \in [0,T],~ z(0) = 0,
\\
&\text{and } z(t) = 0~\forall t \in [0,T] \text{ with } z(t-) \neq z(t)
\big\}
\end{aligned}
\end{equation*}
and the cone of smooth critical radial directions associated with $(y,u)$ to be the set
\begin{equation*}
\begin{aligned}
\KK_{C^\infty}^{\rad,\crit}(y,u) 
:=
\big\{
z \in C^\infty[0, T]
\colon
&z(t) \in K^\ptw_{\crit}(y,u)(t)~\forall t \in [0,T], \, z(0) = 0, 
\\
&\text{and } \exists \alpha > 0 \text{ s.t. } y(t) + \alpha z(t) \in Z~\forall t \in [0, T] 
\big\}.
\end{aligned}
\end{equation*}
\end{definition}

Note that 
$\smash{\KK_{C^\infty}^{\rad,\crit}(y,u)}$
is a subset of 
$\smash{\KK_{G_r}^{\red,\crit}(y,u)}$,
that both $\smash{\KK_{C^\infty}^{\rad,\crit}(y,u)}$
and $\smash{\KK_{G_r}^{\red,\crit}(y,u)}$ are cones
containing the zero function,
and that $\smash{\KK_{C^\infty}^{\rad,\crit}(y,u)}$
is convex. The cone
\smash{$\KK_{G_r}^{\red,\crit}(y,u)$} is typically not convex 
due to the additional conditions on the points of discontinuity.
From \cref{corollary:dirdiffjumps,cor:dirdiffVI+},
it follows that 
$\S'(u;h)_+$ is an element of 
\smash{$\KK_{G_r}^{\red,\crit}(y,u)$}
for all $h \in CBV[0,T]$. 
In fact, $\smash{\KK_{G_r}^{\red,\crit}(y,u)}$
collects all information 
about the pointwise properties of the right limits of the
derivatives $\S'(u;h)$
that we have derived so far. 
This motivates the name ``reduced critical cone'', cf.\ the analysis 
for elliptic variational inequalities
in \cite{ChristofPhd2018}.
From
\cref{prop:classicalcritical},
we obtain that
\begin{align*}
\KK_{G_r}^{\red,\crit}(y,u)  
&=
\Bigg\{
z \in G_r[0, T]
\colon
 z(t) \in K_{\rad}^{\ptw}(y)(t)\,\forall t \in [0,T], \int_0^T z \dd(y-u) = 0,
\\[-0.1cm]
&\hspace{2.8cm}z(0) = 0, z(t) = 0~\forall t \in [0,T] \text{ with } z(t-) \neq z(t)
\Bigg\}
\end{align*}
and
\begin{align*}
\KK_{C^\infty}^{\rad,\crit}(y,u) 
&=
\Bigg \{
z \in C^\infty[0, T]
\colon
z(0) = 0, ~ \int_{0}^{T} z \dd(y-u)  = 0, \text{ and}
\\[-0.1cm]
&\hspace{2.9cm}\exists \alpha > 0 \text{ s.t. } y(t) + \alpha z(t) \in Z~\forall t \in [0, T] 
\Bigg \}.
\end{align*}

The main result of this section 
--
\cref{theorem:tempoly}
-- shows that the 
cone $\smash{\KK_{C^\infty}^{\rad,\crit}(y,u)}$ 
is, in a suitably defined sense, dense in
$\smash{\KK_{G_r}^{\red,\crit}(y,u)}$. 
This density property extends the concept of polyhedricity to
the setting considered in this paper.
In the case of the elliptic problem \eqref{eq:optstaclecontrol}, 
polyhedricity expresses that the set
$ K_{\rad}(y) \cap (u + \Delta y)^\perp$
is $H_0^1(\Omega)$-dense in the critical cone
$K_{\tan}(y) \cap (u + \Delta y)^\perp$,
see \cite{Haraux1977,Wachsmuth2019}. 
For the study 
of the inequality \eqref{eq:V}, the set \smash{$\KK_{C^\infty}^{\rad,\crit}(y,u)$}
is relevant because of the following observation. 

\begin{lemma}[directional derivative in smooth critical radial directions]%
\label{lemma:dirdifcritrad}%
Let $h$ be an arbitrary but fixed element of the set \smash{$\KK_{C^\infty}^{\rad,\crit}(y,u)$}.
Then there exists $\alpha > 0$ such that $\S(u + \beta h) = \S(u) + \beta h$
holds for all $\beta \in (0, \alpha)$. In particular,  $\S'(u;h) = h$. 
\end{lemma}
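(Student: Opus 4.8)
The plan is to verify that the candidate $y + \beta h$ solves the variational inequality \eqref{eq:V} for $u + \beta h$ whenever $\beta>0$ is small enough, and then invoke uniqueness from \cref{th:Swellposed}. Since $h \in \KK_{C^\infty}^{\rad,\crit}(y,u)$, there is $\alpha>0$ with $y(t)+\alpha z(t)\in Z$ for all $t$ when $z=h$; by convexity of $Z=[-r,r]$ and $y(t)\in Z$, this yields $y(t)+\beta h(t)\in Z$ for all $t\in[0,T]$ and all $\beta\in[0,\alpha]$. Also $(y+\beta h)(0) = y_0 + 0 = y_0$ since $h(0)=0$. So the state constraint and initial condition in \eqref{eq:V} are met by $y+\beta h$ for every $\beta\in(0,\alpha)$.

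It remains to check the integral inequality: for $\beta\in(0,\alpha)$ and every $v\in C([0,T];Z)$,
\begin{equation*}
\int_0^T (v - (y+\beta h))\dd\big((y+\beta h)-(u+\beta h)\big) = \int_0^T (v - y - \beta h)\dd(y-u) \geq 0,
\end{equation*}
using that the integrator collapses to $y-u$. I would split this as $\int_0^T (v-y)\dd(y-u) - \beta\int_0^T h\dd(y-u)$. The first term is $\geq 0$ directly from \eqref{eq:V} for the original pair $(y,u)$ (the test function $v$ is admissible). The second term vanishes: $h\in\KK_{C^\infty}^{\rad,\crit}(y,u)$ means in particular $h(t)\in K_{\rad}^{\ptw}(y)(t)$ for all $t$, so \cref{prop:classicalcritical} gives $\int_s^\tau h\dd(y-u)=0$ for all $0\le s<\tau\le T$ — in particular $\int_0^T h\dd(y-u)=0$. (Equivalently, this is precisely the integral condition $\int_0^T z\dd(y-u)=0$ appearing in the reformulation of $\KK_{C^\infty}^{\rad,\crit}(y,u)$ noted just before the lemma.) Hence the integral inequality holds, $y+\beta h$ solves \eqref{eq:V} with input $u+\beta h$, and by uniqueness $\S(u+\beta h)=y+\beta h=\S(u)+\beta h$ for all $\beta\in(0,\alpha)$.

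Finally, the statement $\S'(u;h)=h$ follows from \cref{th:dirdiff}: for $\alpha'\in(0,\alpha)$ the difference quotients are
\begin{equation*}
\frac{\S(u+\alpha' h)(t)-\S(u)(t)}{\alpha'} = \frac{(y+\alpha' h)(t) - y(t)}{\alpha'} = h(t)\qquad\forall t\in[0,T],
\end{equation*}
so the pointwise limit as $\alpha'\to 0^+$ is $h$. I do not anticipate a serious obstacle here; the only point requiring care is the bookkeeping that the integrator in the shifted variational inequality really is $y-u$ (so the perturbation enters only through the integrand), and the correct invocation of \cref{prop:classicalcritical} to kill the term $\int_0^T h\dd(y-u)$. $\qquad\square$
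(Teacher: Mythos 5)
Your proof is correct and follows essentially the same route as the paper: verify that $y+\beta h$ satisfies \eqref{eq:V} with input $u+\beta h$ (state constraint by convexity of $Z$, initial condition by $h(0)=0$, integral inequality by splitting off $\beta\int_0^T h\dd(y-u)=0$), then invoke uniqueness from \cref{th:Swellposed} and pass to the difference quotients. One small imprecision: membership in $K_\rad^\ptw(y)(t)$ alone only yields $\int_s^\tau h\dd(y-u)\ge 0$ via \eqref{eq:classicalcritical:item:ii}; the vanishing of the integral needs $h(t)\in K^\ptw_{\crit}(y,u)(t)$ for all $t$ (which the definition of $\KK_{C^\infty}^{\rad,\crit}(y,u)$ supplies) together with \eqref{eq:classicalcritical:item:i} -- your parenthetical remark about the reformulation of the cone already covers this, so no gap remains.
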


\begin{proof}
According to the definition of the set $\KK_{C^\infty}^{\rad,\crit}(y,u) $,
we can find a number $\alpha > 0$ such that $y(t) + \alpha h(t) \in Z$ holds for all $t \in [0, T]$.
Since $Z$ is convex, this also yields $y(t) + \beta h(t) \in Z$  for all $t \in [0, T]$ and all $\beta \in (0, \alpha)$.
From \cref{prop:classicalcritical} and the 
variational inequality \eqref{eq:V} for $y$, 
we moreover obtain that 
\[
\int_0^T h\dd(y-u) = 0
\qquad
\text{and}
\qquad
\int_0^T (v - y)\dd (y - u)\geq 0 ~~\forall v \in C([0, T]; Z).
\]
If we combine the above with the initial conditions $y(0) = y_0$ and $h(0) = 0$
and our previous considerations, then it follows that 
\begin{equation*}
\begin{aligned}
& \int_0^T (v - (y + \beta h) )\dd (y + \beta h - (u + \beta h))\geq 0 &&\forall v \in C([0, T]; Z),
\\
&y(t) + \beta h(t) \in Z\quad \forall t \in [0, T], &&y(0)+ \beta h(0) = y_0,
\end{aligned}
\end{equation*}
holds for all $\beta \in (0, \alpha)$. Thus,
$\S(u + \beta h) = y + \beta h$
for all $\beta \in (0, \alpha)$ by \cref{th:Swellposed}  as claimed. The assertion about the directional derivative follows immediately 
from this identity. This completes the proof. 
\end{proof}

Note that 
\cref{lemma:dirdifcritrad} remains valid when 
the space $C^\infty[0, T]$
in the definition of  \smash{$\KK_{C^\infty}^{\rad,\crit}(y,u)$}
is replaced with the space $CBV[0,T]$. We consider 
smooth critical radial directions in our analysis 
because this gives rise to a stronger density result
in \cref{theorem:tempoly}.
As we will see in \cref{sec:7}, 
\cref{lemma:dirdifcritrad}
makes it possible to prove the strong stationarity 
system \eqref{eq:strongstatsys-2}
once the polyhedricity  property in \cref{theorem:tempoly}
is established. 
To  obtain the latter, 
we require the following result.

\begin{lemma}%
\label{lemma:polyprep1}%
Suppose that $z \in \KK_{G_r}^{\red,\crit}(y,u)$ and $\xi > 0$ are given.
Let $t \in [0, T]$ be an arbitrary but fixed point of continuity of $z$, i.e., 
a point with $z(t) = z(t-)$. 
Then there exists 
$\varepsilon > 0$ such that the step function
\[
\zeta \colon [0,T] \to \R, \quad \zeta(s) := z(t)\mathds{1}_{J_\varepsilon(t)}(s), \quad 
J_\varepsilon(t) := [t-\varepsilon,t+\varepsilon]\cap [0,T],
\]
possesses all of the following properties:
\begin{enumerate}[label=\roman*)]
\item\label{lemma:polyprep1:i} It is true that 
\[
\sup_{s \in [t-\varepsilon,t+\varepsilon]\cap [0,T]} 
\left |
z(s) - \zeta(s)
\right | \leq \xi.
\]
\item\label{lemma:polyprep1:ii}
 It holds 
\[
\zeta(s) 
\in
K^\ptw_{\crit}(y,u)(s)   \qquad \forall s \in [0,T].
\]
\item\label{lemma:polyprep1:iii}
 For every $0\leq \psi \in C_c^\infty(\R)$ 
 with support 
 $\supp(\psi) \subset (t-\varepsilon, t + \varepsilon)$, 
the function $\psi \zeta \in G[0,T]$ is an element of the cone $\KK_{C^\infty}^{\rad,\crit}(y,u)$. 
\end{enumerate}
\end{lemma}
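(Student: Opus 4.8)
The plan is to exploit the fact that $t$ is a point of continuity of $z$ together with the regularity $z \in G_r[0,T]$ to choose $\varepsilon$ so small that $z$ stays close to the value $z(t)$ on the whole window $J_\varepsilon(t)$, and simultaneously so small that the pointwise critical-cone constraint does not deteriorate when we replace $z(s)$ by the constant $z(t)$. Concretely, I would first dispose of the trivial case $z(t)=0$, where $\zeta \equiv 0$ works for any $\varepsilon$. So assume $z(t)\neq 0$; by the definition of $K^\ptw_{\crit}(y,u)$ this forces $t \notin A(y,u)$ and (if $|y(t)|=r$) a sign condition on $z(t)$. Since $z(t)=z(t-)$ and $z$ is right-continuous (hence $z(t)=z(t+)$ as well, because $z$ regulated with $z(t)=z(t-)$ and $z\in G_r$ means $z$ is continuous at $t$), there is $\varepsilon_1>0$ with $|z(s)-z(t)|\le\xi$ for all $s\in J_{\varepsilon_1}(t)$; shrinking further we can also guarantee $z(s)$ has the same sign as $z(t)$ on this window, so that $z(s)\neq 0$ there. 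This gives \ref{lemma:polyprep1:i} directly.

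For \ref{lemma:polyprep1:ii} I would argue by the structure of $Z=[-r,r]$ and continuity of $y$. If $|y(t)|<r$, continuity of $y$ gives $|y(s)|<r$ on a neighbourhood, so $K^\ptw_{\crit}(y,u)(s)=\mathbb{R}$ there and any real value — in particular $z(t)$ — is admissible; outside the window $\zeta=0\in K^\ptw_{\crit}(y,u)(s)$ always. If $y(t)=r$ (the case $y(t)=-r$ being symmetric), then $z(t-)=z(t)\in K^\ptw_{\crit}(y,u)(t)\subset(-\infty,0]$, and since $z(t)\neq0$ we have $z(t)<0$. I then need $\varepsilon$ small enough that for every $s\in J_\varepsilon(t)$ the value $z(t)$ lies in $K^\ptw_{\crit}(y,u)(s)$. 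The only way this can fail is if $s$ is an inactive point with... no — $\mathbb{R}$ is fine; it fails only at points $s$ where $y(s)=-r$ (forcing $K^\ptw_{\crit}\subset[0,\infty)$, incompatible with $z(t)<0$) or at points $s\in A(y,u)$ (forcing $\{0\}$). Since $y(t)=r>-r$, continuity of $y$ rules out $y(s)=-r$ near $t$. The remaining danger — strictly active points $s$ near $t$ — I would handle using \cref{lemma:yumon}: because $z(s)\neq 0$ and $z\in\KK_{G_r}^{\red,\crit}(y,u)$ takes values in $K^\ptw_{\crit}(y,u)(s)$, we must have $s\notin A(y,u)$ for every $s$ in the sign-controlled window, so in fact $s\in I(y)\cup B_+(y,u)$ throughout, and on all such $s$ a negative value is admissible. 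Hence $\zeta(s)\in K^\ptw_{\crit}(y,u)(s)$ for all $s\in[0,T]$.

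For \ref{lemma:polyprep1:iii}, fix $0\le\psi\in C_c^\infty(\mathbb{R})$ with $\supp(\psi)\subset(t-\varepsilon,t+\varepsilon)$. Then $\psi\zeta$ agrees (up to the constant $z(t)$) with $\psi$ restricted to $[0,T]$, so $\psi\zeta\in C^\infty[0,T]$ and $\psi\zeta(0)=0$ since $0\notin\supp(\psi)$ (shrink $\varepsilon$ so $t-\varepsilon>0$ when $t>0$; when $t=0$ the hypothesis $z(0)=0$ from the definition of $\KK_{G_r}^{\red,\crit}$ forces $z(t)=0$, a case already excluded). That $(\psi\zeta)(s)\in K^\ptw_{\crit}(y,u)(s)$ for all $s$ follows because $K^\ptw_{\crit}(y,u)(s)$ is a cone and $\psi\ge0$, combined with \ref{lemma:polyprep1:ii}. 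Finally I must exhibit $\alpha>0$ with $y(s)+\alpha(\psi\zeta)(s)\in Z$ for all $s$: on the support, $y(s)$ and $z(t)$ point in compatible directions ($y(s)<r$ with $z(t)<0$, or $|y(s)|<r$ with $|y(s)|$ bounded away from $r$ by uniform continuity of $y$ on the compact window after one more shrink of $\varepsilon$), and $\psi$ is bounded; taking $\alpha$ small enough relative to $\|\psi\|_\infty$, $|z(t)|$, and the gap $r-\max_{J_\varepsilon(t)}|y|$ at inactive points keeps $y+\alpha\psi\zeta$ inside $[-r,r]$. Also $\int_0^T\psi\zeta\dd(y-u)=0$ by \eqref{eq:classicalcritical:item:i} of \cref{prop:classicalcritical}, since $\psi\zeta$ is pointwise critical. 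Thus $\psi\zeta\in\KK_{C^\infty}^{\rad,\crit}(y,u)$.

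The main obstacle is \ref{lemma:polyprep1:ii}: one must rule out that arbitrarily close to the continuity point $t$ of $z$ there lurk strictly active times $s\in A(y,u)$ (or, in the boundary case, times where $y$ switches to the opposite obstacle), because at such times the critical cone collapses to $\{0\}$ and the nonzero constant $z(t)$ would be inadmissible. The resolution is that $z$ itself is nonzero on the sign-controlled window and $z(s)\in K^\ptw_{\crit}(y,u)(s)$ by hypothesis, which already forbids $s\in A(y,u)$ there; this is where the definition of $\KK_{G_r}^{\red,\crit}(y,u)$ and the continuity of $y$ do the real work. The rest is elementary bookkeeping: finitely many shrinkings of $\varepsilon$, each justified by continuity of $y$ or of $z$ at $t$, and the cone property of $K^\ptw_{\crit}(y,u)(s)$.
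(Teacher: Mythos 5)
Your proposal is correct and follows essentially the same route as the paper: dispose of the case $z(t)=0$ trivially, and otherwise use the pointwise criticality and nonvanishing of $z$ near the continuity point $t$ to exclude strictly active times from a small window (the paper phrases this as $t\notin\closure(A(y,u))$), then split according to $t\in I(y)$ or $y(t)=\pm r$, using continuity of $y$ to get a uniform margin $\gamma$ that yields both the cone inclusion and the radial condition for $\psi\zeta$ with $\alpha$ proportional to $\gamma\|\psi\|_\infty^{-1}\|\zeta\|_\infty^{-1}$. The only slightly underspecified point is that in the boundary case $y(t)=\pm r$ you also need the quantitative bound $\mp y\ge -r+\gamma$ on the (compact) window — not just direction compatibility — to keep $y+\alpha\psi\zeta$ away from the opposite obstacle, which is exactly what the paper records and follows from continuity of $y$.
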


\begin{proof}
Since $z$ is continuous at $t$, 
we can find $\varepsilon > 0$
such that \ref{lemma:polyprep1:i} holds. 
If $z(t) = 0$, then $\zeta = 0$ and 
\ref{lemma:polyprep1:ii}
and
\ref{lemma:polyprep1:iii}
hold trivially for this $\varepsilon$. 
Due
to the definition of the set \smash{$\KK_{G_r}^{\red,\crit}(y,u)$}
and the continuity of $z$ at $t$,
this case covers in particular 
the situations $t = 0$ and $t\in \closure(A(y,u))$. 
In what follows, we may thus  assume that 
\begin{equation}
\label{eq:randomeq3746}
z(t) \neq 0 \qquad \text{and}\qquad 
J_\varepsilon(t) \subset \big ( I(y) \cup B(y,u) \big ) 
\cap (0, T]
\end{equation}
and have to prove that, 
for a potentially smaller $\varepsilon$,
we have 
\ref{lemma:polyprep1:ii}
and 
\ref{lemma:polyprep1:iii}.
To this end, we distinguish between three cases.

Case 1: $t\in I(y)$. 
In this case, it follows from 
the continuity of $y$ that, 
after possibly making $\varepsilon$ smaller,
we have $J_\varepsilon(t) \subset I(y)$
and $|y| \le r-\gamma$ on $J_\varepsilon(t)$
for some $\gamma > 0$.
This implies in particular that 
$K^\ptw_{\crit}(y,u)(s)= \R$  for all $s\in J_\varepsilon(t)$.

Case 2: $t\in B_+(y,u)$.
In this case,
it follows from the continuity of $y$
that, after possibly making $\varepsilon$ smaller, 
we have $J_\varepsilon(t) \subset I(y)\cup B_+(y,u)$ and
$y \ge -r+\gamma$ on $J_\varepsilon(t)$ 
for some $\gamma > 0$.
Due to the definition of 
 $K^\ptw_{\crit}(y,u)$, 
this implies 
in particular that
$z(t) \in K^\ptw_{\crit}(y,u)(t)= (-\infty,0]
\subset K^\ptw_{\crit}(y,u)(s)$ for all 
$s \in J_\varepsilon(t)$. 

Case 3: $t\in B_-(y,u)$.
In this case,
it follows from the continuity of $y$
that, after possibly making $\varepsilon$ smaller, 
we have $J_\varepsilon(t) \subset I(y)\cup B_-(y,u)$ and 
$y \le r-\gamma$ on $J_\varepsilon(t)$ for some
$\gamma > 0$.
Due to the definition of 
 $K^\ptw_{\crit}(y,u)$, 
this implies 
in particular that
$z(t) \in K^\ptw_{\crit}(y,u)(t)= [0, \infty)
\subset K^\ptw_{\crit}(y,u)(s)$ for all 
$s \in J_\varepsilon(t)$. 

In all of the above cases, 
the resulting $\varepsilon > 0$ 
satisfies 
$z(t) = \zeta(s) 
\in
K^\ptw_{\crit}(y,u)(s)$
and 
$(y + \alpha \zeta)(s) \in Z$
for all $s\in J_\varepsilon(t)$
and all
$0 < \alpha \leq \gamma \|\zeta\|_\infty^{-1}$.
Since $\zeta(s) = 0$ 
for $s\notin J_\varepsilon(t)$,
these inclusions for $\zeta$ are also true for 
all $s \in [0,T]$. 
This proves \ref{lemma:polyprep1:ii}. 
Consider now a function $0 \leq \psi\in C_c^\infty(\R)$ 
with $\supp(\psi)\subset (t-\varepsilon,t+\varepsilon)$. 
Then $\psi\zeta\in C^\infty[0,T]$ 
and it follows from the nonnegativity of $\psi$, 
the properties of $\zeta$, 
the cone property of $K^\ptw_{\crit}(y,u)(s)$,
and \eqref{eq:randomeq3746} that 
$(\psi\zeta)(0) = 0$ holds and that 
$(\psi\zeta)(s)\in K^\ptw_{\crit}(y,u)(s)$
and $(y+\alpha\psi\zeta)(s)\in Z$ 
for all $s\in [0,T]$
and all $0 < \alpha \leq \gamma \|\psi \|_\infty^{-1}\|\zeta\|_\infty^{-1}$.
This shows \smash{$\psi \zeta \in \KK_{C^\infty}^{\rad,\crit}(y,u)$},
establishes \ref{lemma:polyprep1:iii}, and
completes the proof. 
\end{proof}

The next lemma is a version of \cref{lemma:polyprep1} 
for points of discontinuity. 

\begin{lemma}%
\label{lemma:polyprep2}%
Suppose that $z \in \KK_{G_r}^{\red,\crit}(y,u)$ and $\xi > 0$ are given.
Let $t \in [0, T]$ be an arbitrary but fixed point of discontinuity of $z$, i.e., 
a point with $z(t) \neq z(t-)$. 
Then there exists 
$\varepsilon > 0$ such that the step function
\[
\zeta \colon [0,T] \to \R, \qquad 
\zeta (s) := z(t-)\mathds{1}_{J_\varepsilon^-(t)}(s), \quad
J_\varepsilon^-(t) := [t-\varepsilon,t) \cap [0,T],
\]
possesses the following properties:
\begin{enumerate}[label=\roman*)]
\item\label{lemma:polyprep2:i}
 It is true that 
\[
\sup_{s \in [t-\varepsilon, t + \varepsilon] \cap [0,T]} 
\left |
z(s) - \zeta(s)
\right | \leq \xi.
\]
\item\label{lemma:polyprep2:ii} 
It holds 
\[
\zeta(s) 
\in
K^\ptw_{\crit}(y,u)(s)   \qquad \forall s \in [0,T].
\]
\item\label{lemma:polyprep2:iii} 
For every $0 \leq \psi \in C_c^\infty(\R)$
with support $\supp(\psi) \subset (t-\varepsilon, t)$, 
the function $\psi \zeta \in G[0,T]$ is an element of the cone $\KK_{C^\infty}^{\rad,\crit}(y,u)$. 
\end{enumerate}
\end{lemma}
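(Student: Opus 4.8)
The plan is to follow the proof of \cref{lemma:polyprep1} almost verbatim, but with the value $z(t-)$ taking over the role of $z(t)$, the left half-interval $J_\varepsilon^-(t)$ taking over the role of $J_\varepsilon(t)$, and the sign of the jump $z(t-)$ replacing the case distinction on $y(t)$. First I would record the elementary consequences of the hypotheses: since $t$ is a point of discontinuity of $z \in \KK_{G_r}^{\red,\crit}(y,u)$, the defining properties of this cone force $z(t) = 0$, hence $z(t-) \neq 0$; moreover $t > 0$, since the convention $z(0-) := z(0)$ rules out a discontinuity at $0$. Using that $z$ is regulated with $z(s) \to z(t-)$ as $s \to t^-$ and right-continuous with $z(s) \to z(t) = 0$ as $s \to t^+$, I would pick $\varepsilon_1 \in (0, t)$ such that $|z(s) - z(t-)| \le \xi$ for all $s \in [t-\varepsilon_1, t)$ and $|z(s)| \le \xi$ for all $s \in [t, t+\varepsilon_1] \cap [0, T]$. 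Since the step function $\zeta$ equals $z(t-)$ on $J_\varepsilon^-(t)$ and $0$ elsewhere, and since $z(t) = 0 = \zeta(t)$, every $\varepsilon \in (0, \varepsilon_1]$ then satisfies property \ref{lemma:polyprep2:i}.

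The crucial step -- and the point at which the discontinuity enters -- is the observation that the sign of $z(t-)$ confines the times immediately to the left of $t$ to a region on which the pointwise critical cone already contains $z(t-)$. I would treat the case $z(t-) > 0$; the case $z(t-) < 0$ is analogous, with $B_-$, $[0,\infty)$, and $r$ replaced by $B_+$, $(-\infty,0]$, and $-r$, and all inequalities reversed. Since $z(s) \to z(t-) > 0$, there is $\varepsilon_2 \in (0, \varepsilon_1]$ with $z(s) > 0$ for all $s \in (t-\varepsilon_2, t)$; for such $s$ the inclusion $z(s) \in K^\ptw_{\crit}(y,u)(s)$ rules out $s \in B_+(y,u)$ (where the cone equals $(-\infty, 0]$) and $s \in A(y,u)$ (where it equals $\{0\}$), so that $s \in I(y) \cup B_-(y,u)$, and in particular $y(s) < r$ there (as $y < r$ on $I(y)$ and $y = -r < r$ on $B_-(y,u)$). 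Fixing now any $\varepsilon \in (0, \varepsilon_2)$ -- which then also satisfies $0 < \varepsilon < t$ and the requirements from the first paragraph -- we obtain $[t-\varepsilon, t) \subset I(y) \cup B_-(y,u)$, hence $z(t-) \in [0,\infty) \subset K^\ptw_{\crit}(y,u)(s)$ for all $s \in [t-\varepsilon, t)$; combined with $\zeta \equiv 0$ off $J_\varepsilon^-(t)$ and $0 \in K^\ptw_{\crit}(y,u)(s)$ for every $s \in [0,T]$, this yields property \ref{lemma:polyprep2:ii}.

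For property \ref{lemma:polyprep2:iii} I would fix an arbitrary $0 \le \psi \in C_c^\infty(\R)$ with $\supp(\psi) \subset (t-\varepsilon, t)$ and check the three conditions in the definition of $\KK_{C^\infty}^{\rad,\crit}(y,u)$ for $\psi\zeta$. Because $\psi$ vanishes outside the compact set $K := \supp(\psi) \subset (t-\varepsilon, t)$ and $\zeta \equiv z(t-)$ on $J_\varepsilon^-(t)$, the function $\psi\zeta$ coincides on $[0,T]$ with the restriction of $z(t-)\psi \in C^\infty(\R)$; hence $\psi\zeta \in C^\infty[0,T]$ and $(\psi\zeta)(0) = 0$ (note $0 < t-\varepsilon$). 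For each $s \in [0,T]$, the value $(\psi\zeta)(s)$ is either $0$ or the nonnegative scalar multiple $\psi(s)\,z(t-)$ of $z(t-) \in K^\ptw_{\crit}(y,u)(s)$, so $(\psi\zeta)(s) \in K^\ptw_{\crit}(y,u)(s)$ because $K^\ptw_{\crit}(y,u)(s)$ is a cone. Finally, since $y$ is continuous with $y < r$ on the compact set $K$ and $y \ge -r$ on all of $[0,T]$, the number $\gamma_0 := r - \max_{K} y$ is positive, and any $\alpha \in \big(0, \gamma_0 (\|\psi\|_\infty\, z(t-))^{-1}\big]$ satisfies $y(s) + \alpha(\psi\zeta)(s) \in Z$ for all $s \in [0,T]$. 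This shows $\psi\zeta \in \KK_{C^\infty}^{\rad,\crit}(y,u)$, proves \ref{lemma:polyprep2:iii}, and finishes the argument.

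I expect the only genuinely new point, compared with \cref{lemma:polyprep1}, to be the case distinction in the second paragraph: one has to recognize that, near a jump, the sign of $z(t-)$ together with $z(s) \in K^\ptw_{\crit}(y,u)(s)$ forces $y$ to stay on one side of the obstacle ($y < r$, resp.\ $y > -r$) on an entire left neighbourhood of $t$. Once this is available, the verification of \ref{lemma:polyprep2:i}--\ref{lemma:polyprep2:iii} is routine bookkeeping; the only mild subtlety is that the constant $\alpha$ in \ref{lemma:polyprep2:iii} is allowed to depend on $\psi$ (through $K$), which is exactly what the quantifier $\exists\,\alpha>0$ in the definition of $\KK_{C^\infty}^{\rad,\crit}(y,u)$ permits -- $y$ need only stay below $r$ pointwise on $(t-\varepsilon,t)$, not uniformly, since $y(t)$ may equal $r$.
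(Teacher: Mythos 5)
Your proof is correct and follows essentially the same route as the paper's: establish \ref{lemma:polyprep2:i} from the one-sided limits of $z$, show that $z(t-)$ lies in $K^\ptw_{\crit}(y,u)(s)$ on a left-neighborhood of $t$ while $y$ stays strictly on one side of the relevant bound there, and then use compactness of $\supp(\psi)$ to obtain the radial property with an $\alpha$ that may depend on $\psi$. The only difference is cosmetic, namely how the case distinction is organized: you key it to the sign of $z(t-)$, excluding nearby points of $B_+(y,u)$ and $A(y,u)$ (resp.\ $B_-(y,u)$ and $A(y,u)$) via the pointwise cone constraint on $z$, whereas the paper distinguishes according to whether points of $B_+(y,u)$ or $B_-(y,u)$ accumulate at $t$ from the left and deduces the sign of $z(t-)$ and the inclusion $J_\varepsilon^-(t) \subset I(y)\cup B_\pm(y,u)$ from the continuity of $y$ -- the two arguments are interchangeable.
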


\begin{proof}
Since $z\in\KK_{G_r}^{\red,\crit}(y,u)$,
it necessarily holds $t > 0$ and $z(t) = 0$.
As $z$ is right-continuous, 
this implies that there exists $\varepsilon > 0$
such that 
\ref{lemma:polyprep2:i} is satisfied.
Moreover, $z(t-) \neq 0$ because $z$ is assumed to be discontinuous at $t$. Since $z = 0$ on $ A(y,u)$,
it follows that, 
for a potentially smaller $\varepsilon$, we have 
\begin{equation}\label{eq:polyprep2}
J_\varepsilon^-(t) 
 \subset \big ( I(y) \cup B(y,u) \big ) 
\cap (0, T].
\end{equation}
We now again distinguish between three cases.

Case 1: After possibly making $\varepsilon$
smaller, we have $J_\varepsilon^-(t)\subset I(y)$.
In this case, 
it holds $K^\ptw_{\crit}(y,u)(s)= \R$ for all $s\in J_\varepsilon^-(t)$ and it follows from the 
continuity of $y$ that, for every 
compact set $E \subset J_\varepsilon^-(t)$,
we can find a number $\gamma > 0$ with 
$|y| \le r-\gamma$ on $E$.

Case 2: There exists a sequence $\{s_n\}\subset B_+(y,u)$ with $s_n\to t^-$. 
In this case, we have $y(s_n) = r$ and $z(s_n)\in K^\ptw_{\crit}(y,u)(s_n)= (-\infty,0]$ for all $n$
and it follows that $y(t) = r$ and $z(t-)\le 0$. 
Due to the continuity of $y$ and \eqref{eq:polyprep2},
this implies that, after possibly making 
$\varepsilon$ smaller, we have
$J_\varepsilon^-(t)  \subset I(y) \cup B_+(y,u)$.
In particular, it holds 
$z(t-) \in (-\infty,0] \subset K^\ptw_{\crit}(y,u)(s)$ for all $s\in J_\varepsilon^-(t)$ and, 
for all compact $E \subset J_\varepsilon^-(t)$,
we can find a number $\gamma > 0$ with 
$y \ge -r + \gamma$ on $E$.

Case 3: There exists a sequence $\{s_n\}\subset B_-(y,u)$ with $s_n\to t^-$. 
In this case, we have $y(s_n) = -r$ and $z(s_n)\in K^\ptw_{\crit}(y,u)(s_n)= [0, \infty)$ for all $n$
and it follows that $y(t) = -r$ and $z(t-)\ge 0$. 
Due to the continuity of $y$ and \eqref{eq:polyprep2},
this implies that, after possibly making 
$\varepsilon$ smaller, we have
$J_\varepsilon^-(t)  \subset I(y) \cup B_-(y,u)$.
In particular, it holds 
$z(t-) \in [0, \infty) \subset K^\ptw_{\crit}(y,u)(s)$ for all $s\in J_\varepsilon^-(t)$ and, 
for all compact $E \subset J_\varepsilon^-(t)$,
we can find a number $\gamma > 0$ with 
$y \le r - \gamma$ on $E$.

In all of the above cases, 
the resulting $\varepsilon > 0$ 
satisfies $z(t-) = \zeta(s) 
\in
K^\ptw_{\crit}(y,u)(s)$
for all $s\in J_\varepsilon^-(t)$.
Since $\zeta(s) = 0$ 
for $s\notin J_\varepsilon^-(t)$, this proves \ref{lemma:polyprep1:ii}. Moreover, 
we obtain from the above construction
that, for every compact set $E \subset J_\varepsilon^-(t)$, there exists a number $\gamma > 0$
with $(y + \alpha \zeta)(s) \in Z$
for all $s \in E$ and all 
$0 < \alpha \leq \gamma \|\zeta\|_\infty^{-1}$.
If a function $\psi\in C_c^\infty(\R)$ with $\psi\ge 0$ and 
support $E := \supp(\psi)\subset (t-\varepsilon,t)$ 
is given, 
then this implies that
$(y+\alpha\psi\zeta)(s)\in Z$ 
holds for all $s\in [0,T]$ and all 
$0 < \alpha \leq \gamma \|\psi\|_\infty^{-1}\|\zeta\|_\infty^{-1}$. 
Due to the nonnegativity of $\psi$,
the properties of $\zeta$,
and the cone property of $K^\ptw_{\crit}(y,u)(s)$,
one further obtains that 
$(\psi\zeta)(s)\in K^\ptw_{\crit}(y,u)(s)$ holds 
for all $s\in [0,T]$, and due to \eqref{eq:polyprep2}
and the properties of $\supp(\psi)$,
that $(\psi\zeta)(0) = 0$
and $\psi \zeta \in C^\infty[0,T]$.
Thus,
\smash{$\psi \zeta \in \KK_{C^\infty}^{\rad,\crit}(y,u)$}.
This establishes 
\ref{lemma:polyprep2:iii} and completes 
the proof. 
\end{proof}

We can now prove the main result of this section.

\begin{theorem}[temporal polyhedricity]%
\label{theorem:tempoly}%
Let $z \in \KK_{G_r}^{\red,\crit}(y,u)$ be given.
Then there exist functions $z_{i,j}, z_j \in G_r[0,T]$, $i,j \in \mathbb{N}$, such that the following is true:
\begin{equation*}
\begin{gathered}
z_{i,j} \in \KK_{C^\infty}^{\rad,\crit}(y,u), 
\qquad \|z_{i,j}\|_\infty \leq \|z\|_\infty~\forall i,j,
\\
z_{j} \in \KK_{G_r}^{\red,\crit}(y,u),
\qquad \|z_{j}\|_\infty \leq \|z\|_\infty~\forall j,
\\
z_{i,j} \to z_j \text{ pointwise in } [0,T] \text{ for } i \to \infty \text{ for all }j,
\\
z_j \to z \text{ uniformly in } [0,T] \text{ for } j \to \infty.
\end{gathered}
\end{equation*}
\end{theorem}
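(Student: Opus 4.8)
The plan is to approximate $z$ in two stages that mirror the two convergences claimed: first a \emph{uniform} reduction of $z$ to an element of $\KK_{G_r}^{\red,\crit}(y,u)$ having only finitely many jumps, and then, for each such ``finitely-jumping'' element, a \emph{pointwise} smoothing assembled from the local blocks provided by \cref{lemma:polyprep1,lemma:polyprep2}.

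\emph{First stage (the functions $z_j$).} Fix $j$ and put $\xi_j := 1/j$. Since $z \in G_r[0,T]$ is regulated, it has only finitely many discontinuities $\tau_1 < \dots < \tau_m$ with $|z(\tau_l-)| \geq \xi_j$, all lying in $(0,T]$, and by the definition of $\KK_{G_r}^{\red,\crit}(y,u)$ we have $z(\tau_l) = z(\tau_l+) = 0$ at each of them. I would let $z_j$ agree with $z$ on small neighborhoods of the $\tau_l$ (thereby keeping these jumps-to-zero) and, on each of the finitely many remaining subintervals of $[0,T]$ -- on which $z$ has only ``small'' jumps -- replace $z$ by a \emph{continuous} function lying within $\xi_j$ of $z$ that still takes values in the time-dependent cone $K^\ptw_{\crit}(y,u)(s)$: concretely, mollify $z$ at a scale fine enough to gain both continuity and $\xi_j$-closeness, then compose with the pointwise metric projection onto $K^\ptw_{\crit}(y,u)(s)$, which is one of $\mathrm{id}$, $\min(\cdot,0)$, $\max(\cdot,0)$, $0$, is nonexpansive, fixes $z(s)$, and does not increase the modulus. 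Using the continuity of $y$ together with the description of $K^\ptw_{\crit}(y,u)$ near $I(y)$, $B_\pm(y,u)$ and $\closure(A(y,u))$ from \cref{sec:4}, one checks that this procedure can only create additional discontinuities that carry the value $0$, hence stays inside $\KK_{G_r}^{\red,\crit}(y,u)$. After shrinking the construction parameters this yields $z_j \in \KK_{G_r}^{\red,\crit}(y,u)$ with at most finitely many jumps, $\|z_j\|_\infty \leq \|z\|_\infty$ and $\|z_j - z\|_\infty \leq \xi_j$, so $z_j \to z$ uniformly.

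\emph{Second stage (the functions $z_{i,j}$).} Fix $j$, write $w := z_j$, and let $\tau_1 < \dots < \tau_m$ now be the finitely many discontinuities of $w$. Given $i$, apply \cref{lemma:polyprep2} to $w$ at each $\tau_l$ with $\xi = 1/i$ to get half-open intervals $J_{\varepsilon_l}^-(\tau_l) = [\tau_l - \varepsilon_l,\tau_l)$ (shrunk so that also $\varepsilon_l \leq 1/i$) and associated blocks, and apply \cref{lemma:polyprep1} to $w$ at every point $t$ of the \emph{compact} set $[0,T] \setminus \bigcup_l (\tau_l - \varepsilon_l, \tau_l + \varepsilon_l)$ -- all of which are continuity points of $w$ -- with $\xi = 1/i$ to get open intervals $(t - \varepsilon_t, t + \varepsilon_t)$ and associated blocks; a finite subcover gives finitely many blocks $\zeta_1,\dots,\zeta_p$, each of the form $\zeta_q = c_q \mathds{1}_{D_q}$, each accompanied by an open interval $V_q \subset [0,T]$ on which $|\zeta_q - w| \leq 1/i$ and such that, by parts iii) of the cited lemmas, $\psi \zeta_q \in \KK_{C^\infty}^{\rad,\crit}(y,u)$ for every $0 \leq \psi \in C_c^\infty(\R)$ with $\supp(\psi) \subset V_q$. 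I then choose $\psi_1,\dots,\psi_p \in C_c^\infty(\R)$ with $0 \leq \psi_q \leq 1$, $\supp(\psi_q) \subset V_q$, $\sum_q \psi_q \leq 1$ on $[0,T]$, and $\sum_q \psi_q = 1$ on $[0,T] \setminus \bigcup_l (\tau_l - \rho_{l,i}, \tau_l)$ for suitable $\rho_{l,i} \leq \varepsilon_l$, and set $z_{i,j} := \sum_{q=1}^p \psi_q \zeta_q$. Then $z_{i,j}$ is a finite sum of elements of the convex cone $\KK_{C^\infty}^{\rad,\crit}(y,u)$, hence lies in it; from $0 \leq \psi_q \leq 1$, $\sum_q \psi_q \leq 1$ and $\|\zeta_q\|_\infty \leq \|w\|_\infty \leq \|z\|_\infty$ we get $\|z_{i,j}\|_\infty \leq \|z\|_\infty$; and since $\psi_q(s) > 0$ forces $s \in V_q$ and hence $|\zeta_q(s) - w(s)| \leq 1/i$, we obtain $|z_{i,j}(s) - w(s)| \leq 1/i + (1 - \sum_q \psi_q(s))|w(s)|$ for all $s$. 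For fixed $s \in [0,T]$, either $s$ eventually lies in the set where $\sum_q \psi_q = 1$ (then $|z_{i,j}(s) - w(s)| \leq 1/i \to 0$), or $s = \tau_l$ for some $l$, in which case $w(\tau_l) = 0$ and the only blocks active at $\tau_l$ come from \cref{lemma:polyprep1} applied at points just to the right of $\tau_l$, whose constants converge to $w(\tau_l+) = 0$; in both cases $z_{i,j}(s) \to z_j(s)$. Hence $z_{i,j} \to z_j$ pointwise on $[0,T]$, as required.

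\emph{Main obstacle.} The hard part is the first stage: one must turn $z$ into an element of $\KK_{G_r}^{\red,\crit}(y,u)$ with only finitely many discontinuities while simultaneously respecting the cones $K^\ptw_{\crit}(y,u)(s)$ (which jump between $\R$, a half-line and $\{0\}$ as $s$ varies), keeping every surviving discontinuity at the value $0$, and preserving both the sup-norm bound and the $\xi_j$-closeness; the continuity of $y$ and the structural results of \cref{sec:4} are exactly what make such a modification possible. The second stage is, by contrast, a compactness-plus-partition-of-unity argument once \cref{lemma:polyprep1,lemma:polyprep2} are in hand, with the single caveat that the approximation near the finitely many jumps of $z_j$ can only be pointwise, not uniform.
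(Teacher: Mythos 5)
The decisive gap is in your first stage. The mechanism you propose for producing $z_j$ -- mollify $z$ and then compose with the pointwise metric projection onto $K^\ptw_{\crit}(y,u)(s)$ -- does not in general yield an element of $\KK_{G_r}^{\red,\crit}(y,u)$, because the cone mapping $s \mapsto K^\ptw_{\crit}(y,u)(s)$ switches discontinuously between $\R$, $(-\infty,0]$, $[0,\infty)$, and $\{0\}$, and these regimes may interleave infinitely often. Concretely, let $t \in B_+(y,u)$ be an accumulation point from the right of both inactive times and biactive times (this happens whenever $y$ repeatedly touches $r$ while $y-u$ is locally constant), and suppose the mollified function $\tilde z$ has $\tilde z(t) > 0$, which is perfectly possible even though $z(t+) = z(t) \leq 0$, since the mollification averages in values of $z$ from the left of $t$, where the cone may be all of $\R$. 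Then the projected function equals $\tilde z(s) \approx \tilde z(t) > 0$ along the inactive times and $\min(\tilde z(s),0) \approx 0$ along the biactive times, so it has no right limit at $t$: it is not regulated, hence not in $G_r[0,T]$, and the statement that ``additional discontinuities carry the value $0$'' is not even applicable. The same failure occurs at right accumulation points of $A(y,u)$ interleaved with inactive times. So the sentence ``one checks that this procedure \dots stays inside $\KK_{G_r}^{\red,\crit}(y,u)$'' is precisely the unproven hard point, and the proposed check fails: what is needed is a continuous (indeed admissible) selection for a discontinuously varying cone that vanishes on $\closure(A(y,u))$ and has the correct sign near limits of $B_\pm(y,u)$, and this is exactly what \cref{lemma:polyprep1,lemma:polyprep2} achieve through locally constant blocks (exploiting that the cone locally \emph{enlarges} around points where $z$ is nonzero), not through pointwise projections. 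There is also a smaller, repairable inconsistency in your second stage: the \cref{lemma:polyprep2} windows lie strictly to the left of $\tau_l$ and your compact set excludes the two-sided neighborhoods $(\tau_l-\varepsilon_l,\tau_l+\varepsilon_l)$, so the chosen windows need not cover $[\tau_l,\tau_l+\varepsilon_l)$ and one cannot in general demand $\sum_q \psi_q = 1$ off $\bigcup_l(\tau_l-\rho_{l,i},\tau_l)$.

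For comparison, the paper's proof never performs your first stage. It applies \cref{lemma:polyprep1,lemma:polyprep2} directly to $z$ at \emph{every} point of $[0,T]$ (continuity points via the first lemma, jump points via the second), extracts a single finite subcover, shrinks the intervals so that no $t_k$ lies in another window, takes a smooth partition of unity $\{\psi_k\}$ subordinate to the shrunken cover, and defines $z_{i,j}$ as the sum of the local constant blocks weighted by $\psi_k$, inserting a smooth left cutoff $\varphi((\cdot-t_k+1/i)/(1/i))$ in the jump terms; the function $z_j$ is then \emph{defined} as the pointwise limit $i \to \infty$ of these smooth functions and only afterwards verified to lie in $\KK_{G_r}^{\red,\crit}(y,u)$ and to satisfy $\|z-z_j\|_\infty \leq 1/j$. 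In particular, the reduction to finitely many jumps that you attempt to build by hand comes for free from compactness, and the delicate cone-respecting modification of $z$ is entirely absorbed into the two local lemmas. If you want to salvage your two-stage architecture, the cleanest fix is to construct $z_j$ itself by such a covering/partition-of-unity argument rather than by mollification and projection.
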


\begin{proof}
Consider an arbitrary but fixed $j \in \mathbb{N}$ and define $\xi := 1/j$. 
For every $t \in [0,T]$, we choose $\varepsilon_t > 0$ for this $\xi$ as in \cref{lemma:polyprep1,lemma:polyprep2}.
This results in a collection of open intervals $(t - \varepsilon_t, t + \varepsilon_t)$ that 
covers $[0,T]$. By compactness, we can choose a finite subcover of this collection.
We denote the time points of this cover with $t_k$, $k=1,...,N$, $N \in \mathbb{N}$, 
and the associated $\varepsilon_{t_k}$ with $\varepsilon_k$, $k=1,...,N$. 
We assume w.l.o.g.\ that there are no $k,l$ satisfying 
$(t_k - \varepsilon_k, t_k + \varepsilon_k) \subset (t_l - \varepsilon_l, t_l + \varepsilon_l)$
and $k \neq l$. In this case, 
by possibly 
making the intervals  $(t_k - \varepsilon_k, t_k + \varepsilon_k)$
smaller, we can construct intervals $(t_k - a_k, t_k + b_k)$, $a_k, b_k > 0$, such that 
\begin{equation*}
\begin{gathered}
(t_k - a_k, t_k + b_k) \subset (t_k - \varepsilon_k, t_k + \varepsilon_k) ~\forall k = 1,...,N,
\qquad
[0,T] \subset \bigcup_{k=1}^N (t_k - a_k, t_k + b_k),
\\
\text{and } t_k \notin  (t_l - a_l, t_l+ b_l)~\forall k \neq l.
\end{gathered}
\end{equation*}
Consider now a smooth partition of unity on $[0,T]$ 
subordinate to the modified cover $(t_k - a_k, t_k + b_k)$,
$k=1,...,N$,
i.e., a collection of functions $\psi_k$, $k=1,...,N$, satisfying 
\begin{equation*}
\begin{gathered}
\psi_k \in C_c^\infty(\R),\quad 0 \leq \psi_k(t) \leq 1~\forall t \in \R,
\quad 
\supp(\psi_k) \subset (t_k - a_k, t_k + b_k) ~\forall k=1,...,N,
\\
\sum_{k=1}^N \psi_k(t) = 1~\forall t \in [0,T],
\end{gathered}
\end{equation*}
see, e.g., \cite{Evans2010},
and choose an arbitrary but fixed function $\varphi \in C^\infty(\R)$ satisfying 
\[
0 \leq \varphi(t) \leq 1~\forall t \in \R,\quad 
\varphi(t) = 1~\forall t \in (-\infty, -1],\quad 
\varphi(t) = 0~\forall t \in [0,\infty).
\]
Define 
\begin{equation*}
z_{i,j}(s)
:=
\sum_{k \colon z(t_k)= z(t_k-)} z(t_k) \psi_k(s)
+
\sum_{k \colon z(t_k) \neq z(t_k-)} z(t_k-) \psi_k(s) \varphi\left ( \frac{s - t_k + 1/i}{1/i} \right )
\end{equation*}
for all $i \in \mathbb{N}$ and $s \in [0,T]$. We claim that $z_{i,j} \in \KK_{C^\infty}^{\rad,\crit}(y,u)$ holds 
for all $i \in \mathbb{N}$. To see this, we first note that we have 
\[
z(t_k) \psi_k(\cdot)\big |_{[0,T]} \in \KK_{C^\infty}^{\rad,\crit}(y,u)\quad \forall k\colon z(t_k)= z(t_k-)
\]
by \cref{lemma:polyprep1}\ref{lemma:polyprep1:iii} and the 
condition $\supp(\psi_k) \subset (t_k - a_k, t_k + b_k) \subset (t_k - \varepsilon_k, t_k + \varepsilon_k)$ for all $k$. 
Analogously, we also have 
\[
z(t_k-) \psi_k(\cdot) \varphi\left ( \frac{\cdot - t_k + 1/i}{1/i} \right )\Bigg |_{[0,T]}  \in \KK_{C^\infty}^{\rad,\crit}(y,u)\quad \forall k\colon z(t_k)\neq z(t_k-)
\]
by the properties of $\psi_k$ and $\varphi$ and \cref{lemma:polyprep2}\ref{lemma:polyprep2:iii}. 
By combining these facts with the observation 
that $\smash{ \KK_{C^\infty}^{\rad,\crit}(y,u)}$ is a convex cone, 
the inclusion $z_{i,j} \in \KK_{C^\infty}^{\rad,\crit}(y,u)$ follows immediately.
Due to the properties of $\varphi$, we further have 
\[
z_{i,j}(s) 
\to 
\sum_{k \colon z(t_k)= z(t_k-)} z(t_k) \psi_k(s)
+
\sum_{k \colon z(t_k) \neq z(t_k-)} z(t_k-) \psi_k(s) \mathds{1}_{(-\infty, t_k)}(s)
\]
for all $s \in [0,T]$ for $i \to \infty$.
Let us denote the function on the right of the last limit with $z_j$. 
By construction, 
the points of discontinuity of this function $z_j$
are precisely the points $t_k$ with $z(t_k) \neq z(t_k-)$. 
Further, at these points, the function $z_j$ is clearly right-continuous and,
by the choice of the functions $\psi_k$ and the 
condition $ t_k \notin  (t_l - a_l, t_l+ b_l)$ for all $k \neq l$, we have 
\[
z_j(t_k) = z(t_k-) \psi_k(t_k) \mathds{1}_{(-\infty, t_k)}(t_k) = 0 
\]
for all $k$ with $z(t_k) \neq z(t_k-)$. 
In combination with the choice of the functions $\psi_k$, 
this yields
$z_j \in G_r[0,T]$, $z_j(t) = z_j(t+) = 0$ for all $t \in [0,T]$ with $z_j(t) \neq z_j(t-)$,
and $z_j(0) = 0$. Due to the properties of $\psi_k$, the 
inclusion $(t_k - a_k, t_k + b_k) \subset (t_k - \varepsilon_k, t_k + \varepsilon_k)$ for all $k$,
the second points of \cref{lemma:polyprep1,lemma:polyprep2},
and the fact that $K^\ptw_{\crit}(y,u)(s)$ is a convex cone for all $s \in [0,T]$,
we also have $\smash{z_j(s) \in K^\ptw_{\crit}(y,u)(s)}$ for all $s \in [0,T]$.
In summary, this allows us to conclude that $z_j \in \KK_{G_r}^{\red,\crit}(y,u)$
holds as desired. It remains to establish the uniform convergence of $z_j$ to $z$ for $j \to \infty$.
To this end, we note that, due to the properties of the partition of 
unity $\{\psi_k\}$, we have 
\begin{equation*}
\begin{aligned}
&\sup_{s \in [0,T]}\left |
z(s) - z_j(s)
\right |
\\
&=
\sup_{s \in [0,T]}\left |
z(s) -
\sum_{k \colon z(t_k)= z(t_k-)} z(t_k) \psi_k(s)
-
\sum_{k \colon z(t_k) \neq z(t_k-)} z(t_k-) \psi_k(s) \mathds{1}_{(-\infty, t_k)}(s)
\right |
\\
&=
\sup_{s \in [0,T]}\left |
\sum_{k \colon z(t_k)= z(t_k-)} \left (z(s) - z(t_k) \right ) \psi_k(s) \right.
\\
&\qquad\qquad\quad \left. + 
\sum_{k \colon z(t_k) \neq z(t_k-)} \Big( z(s) - z(t_k-) \mathds{1}_{(-\infty, t_k)}(s)\Big )\psi_k(s)
\right |
\\
&\leq 
\sup_{s \in [0,T]} 
\left ( 
\sum_{k \colon z(t_k)= z(t_k-)} 
\sup_{\tau \in [t_k - \varepsilon_k, t_k + \varepsilon_k] \cap [0,T]}
\left | z(\tau) - z(t_k) \right | \psi_k(s)  \right.
\\
&\qquad\qquad\quad  + \left.
\sum_{k \colon z(t_k) \neq z(t_k-)} 
\sup_{\tau \in [t_k - \varepsilon_k, t_k + \varepsilon_k] \cap [0,T]} \Big| z(\tau) - z(t_k-) \mathds{1}_{(-\infty, t_k)}(\tau)\Big | \psi_k(s)
\right ).
\end{aligned}
\end{equation*} 
Due to the inequalities in  \cref{lemma:polyprep1}\ref{lemma:polyprep1:i} and  \cref{lemma:polyprep2}\ref{lemma:polyprep2:i},
our choice $\xi = 1/j$, and the properties of $\psi_k$,
the last estimate yields
\[
\sup_{s \in [0,T]}\left |
z(s) - z_j(s)
\right |
 \leq 
\sup_{s \in [0,T]} 
\left ( 
\sum_{k \colon z(t_k)= z(t_k-)} 
\frac{\psi_k(s)}{j}  +
\sum_{k \colon z(t_k) \neq z(t_k-)} 
\frac{\psi_k(s)}{j}   
\right )
=
\frac{1}{j}.
\]
This shows that the sequence $\{z_j\}$ indeed converges uniformly to $z$ for $j \to \infty$.
That we have $\|z_{i,j}\|_\infty \leq \|z\|_\infty$ and $\|z_{j}\|_\infty \leq \|z\|_\infty$
follows immediately from our construction and the properties of 
$\psi_k$ and $\varphi$. This completes the proof. 
\end{proof}

Note that, to be able to 
establish that
 $\smash{\KK_{C^\infty}^{\rad,\crit}(y,u)}$
is dense in $\smash{\KK_{G_r}^{\red,\crit}(y,u)}$,
one necessarily has to consider a type of convergence weaker than 
uniform convergence since otherwise it is not possible to 
leave the space $C[0,T] \supset \smash{\KK_{C^\infty}^{\rad,\crit}(y,u)}$. This is a major 
difference between the 
temporal polyhedricity result in
\cref{theorem:tempoly} and the classical
notion of polyhedricity for the 
elliptic obstacle problem in 
\eqref{eq:optstaclecontrol} which yields the density 
of the set of critical radial directions 
$ K_{\rad}(y) \cap (u + \Delta y)^\perp$
in the critical cone 
$K_{\tan}(y) \cap (u + \Delta y)^\perp$
in $(H_0^1(\Omega), \|\cdot\|_{H_0^1})$ 
and thus in the topology that 
is natural for the underlying variational inequality. 
For \eqref{eq:V},
this natural choice of the topology would be that 
of uniform convergence as the Lipschitz estimate \eqref{eq;LipschitzSinfty} shows.

Before we apply \cref{theorem:tempoly} to derive strong stationarity conditions for 
\eqref{eq:P}, we prove a further auxiliary result.

\begin{lemma}%
\label{lemma:polarconeatoms}%
Suppose that $t \in [0,T]$ is given
and let $c \in \R$ be an element of the polar cone $K^\ptw_{\crit}(y,u)(t)^\circ$, i.e., 
the set 
\begin{equation}
\label{eq:ptwnormalconedef}
K^\ptw_{\crit}(y,u)(t)^\circ 
:=
\begin{cases}
\{0\} &\text{ if } t \in I(y),
\\
[0, \infty) & \text{ if } t \in B_+(y,u),
\\
(-\infty, 0]& \text{ if } t \in B_-(y,u),
\\
\R &\text{ if } t \in  A(y,u).
\end{cases}
\end{equation}
Then there exists a sequence $\{h_i\} \subset C^\infty[0,T]$ such that 
the following holds:
\begin{equation*}
\begin{gathered}
\|h_i\|_\infty \leq |c| \text{ and } \|\S'(u;h_i)\|_\infty \leq 2|c|~\forall i \in \mathbb{N},
\\
\S'(u;h_i)_+ \to 0 \text{ pointwise in } [0,T] \text{ for } i \to \infty,
\\
h_i  \to c \mathds{1}_{[t,T]} \text{ pointwise in } [0,T] \text{ for } i \to \infty.
\end{gathered}
\end{equation*}
\end{lemma}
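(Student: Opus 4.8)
The plan is to take $h_i$ to be a smooth monotone ramp that climbs from $0$ to $c$ on the shrinking interval $[t-1/i,t]$ and equals $c$ on $[t,T]$, and to show that for this choice the perturbation of the state is, up to $o(\alpha)$, confined to a neighbourhood of $t$ that contracts to $\{t\}$. First the trivial cases: if $c=0$ take $h_i\equiv0$; if $t=0$ take $h_i\equiv c$ and use that \eqref{eq:V}, hence $\S$, is unchanged when a constant is added to $u$, so that $\S'(u;h_i)\equiv0$. So assume $c\neq0$ and $t\in(0,T]$; the form of the polar cone then forces $|y(t)|=r$, and we may assume $y(t)=r$ (the case $y(t)=-r$ being analogous), so that $t\in B_+(y,u)\cup A(y,u)$ and $c\geq0$ whenever $t\in B_+(y,u)$. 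For $i$ large, fix $\varphi_i\in C^\infty[0,T]$ nondecreasing with $0\leq\varphi_i\leq1$, $\varphi_i\equiv0$ on $[0,t-1/i]$ and $\varphi_i\equiv1$ on $[t,T]$, and put $h_i:=c\varphi_i$. Then $h_i\in C^\infty[0,T]$, $h_i\to c\mathds 1_{[t,T]}$ pointwise, $\|h_i\|_\infty=|c|$ and $\var(h_i)=|c|$, so by \cref{th:dirdiffVI} (which gives $\S'(u;h_i)(0)=0$ and $\var(\S'(u;h_i))\leq2\var(h_i)$) we obtain $\|\S'(u;h_i)\|_\infty\leq2|c|$.

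Write $\delta_i:=\S'(u;h_i)$ and $\eta_i:=\delta_{i,+}$. It remains to show $\eta_i\to0$ pointwise, and for this it suffices to prove $\eta_i\equiv0$ on $[0,t-1/i)\cup[t,T]$: indeed, any fixed $s\neq t$ avoids $[t-1/i,t)$ for all large $i$, and $\eta_i(t)=0$ follows from $\eta_i\equiv0$ on $[t,T]$. Since $h_i\equiv0$ on $[0,t-1/i]$, uniqueness of solutions of \eqref{eq:V} on $[0,t-1/i]$ (cf.\ \cref{th:Swellposed,lemma:gentest}) gives $\S(u+\alpha h_i)=y$ there for every $\alpha$, hence $\delta_i\equiv0$ and $\eta_i\equiv0$ on $[0,t-1/i)$. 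For the behaviour on $[t,T]$ the crucial point is that $(u+\alpha h_i)|_{[t,T]}=u|_{[t,T]}+\alpha c$ has the same Kurzweil--Stieltjes differential as $u|_{[t,T]}$; by \cref{lemma:gentest} and uniqueness of \eqref{eq:V} on $[t,T]$ with prescribed value at $t$, this forces $\S(u+\alpha h_i)=y$ on all of $[t,T]$ as soon as $\S(u+\alpha h_i)(t)=y(t)=r$. Hence it is enough to show that for each fixed $s\in(t,T]$ one has $\S(u+\alpha h_i)(s)=y(s)$ for all sufficiently small $\alpha>0$, since then $\delta_i(s)=0$ for every such $s$, and therefore $\delta_i\equiv0$ and $\eta_i=\delta_{i,+}\equiv0$ on $[t,T]$.

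If $c>0$ this is clean: $\alpha h_i$ is nondecreasing, so the comparison principle \cref{lemma:monotonicity} gives $\S(u+\alpha h_i)\geq\S(u)=y$, whence $\S(u+\alpha h_i)(t)\geq r$, i.e.\ $=r$. If $c<0$ then necessarily $t\in A(y,u)$; here \cref{lemma:monotonicity} and \eqref{eq;LipschitzSinfty} give $\S(u+\alpha h_i)\leq y$ and $b_\alpha:=\S(u+\alpha h_i)(t)\in[r-2\alpha|c|,r]$. Because $t\in A(y,u)$ and $y(t)=r$, \cref{lemma:yumon} shows that $y-u$ is nonincreasing on some $[t,t+\varepsilon_1]\subset[0,T]$ with $y>-r$ there, and — as $y-u$ is not constant on any right neighbourhood of $t$ — that $\theta(s):=(y-u)(t)-(y-u)(s)$ is continuous, nondecreasing, and strictly positive on $(t,t+\varepsilon_1]$. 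From the one-dimensional stop dynamics on $[t,t+\varepsilon_1]$ one then checks that $y(s)-\S(u+\alpha h_i)(s)=(r-b_\alpha-\theta(s))^+$ there (for $\alpha$ small, $\S(u+\alpha h_i)$ lies in the open interval $(-r,r)$ wherever it is strictly below $y$, and both $y$ and $\S(u+\alpha h_i)$ then vary only through $\dd u$, so $\dd(y-\S(u+\alpha h_i))=\dd(y-u)$); consequently $\S(u+\alpha h_i)$ meets $y$ at $\tau^*(\alpha):=\inf\{s\geq t:\theta(s)\geq r-b_\alpha\}\to t$ as $\alpha\to0^+$, and since on $[\tau^*(\alpha),T]$ the two functions solve \eqref{eq:V} with equal differentials and equal values at $\tau^*(\alpha)$, uniqueness yields $\S(u+\alpha h_i)=y$ on $[\tau^*(\alpha),T]$. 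Thus for fixed $s\in(t,T]$ and $\alpha$ small enough that $\tau^*(\alpha)<s$ we get $\S(u+\alpha h_i)(s)=y(s)$, which finishes the argument; collecting everything gives $\eta_i\to0$ pointwise, while the norm bounds and $h_i\to c\mathds 1_{[t,T]}$ were already established. I expect the $c<0$ case (equivalently $t\in A(y,u)$) to be the main obstacle: it forces one to quantify how fast the downward-perturbed state relaxes back onto $y$, which hinges on the elementary but non-obvious fact that $y-u$ is strictly decreasing just to the right of a strictly active point, together with careful bookkeeping of the stop dynamics — whereas the $c>0$ case is settled immediately by monotonicity and the constant-shift invariance of $\S$.
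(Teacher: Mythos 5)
Your proposal is correct, but it reaches the conclusion by a genuinely different mechanism than the paper. The paper argues entirely at the level of the derivative $\eta_i=\S'(u;h_i)_+$: it gets $\eta_i=0$ on $[0,t-1/i]$ by testing \eqref{eq:dirdiffVI+s} with $z=0$ and $z=2\eta_i$ together with \cref{prop:partialIntegration}, obtains $\eta_i(t)=0$ from the cone constraint (strictly active case) or the comparison principle (biactive cases), and then eliminates $\eta_i$ on $(t,T]$ by a truncation--uniqueness trick: since $h_i$ is constant on $[t,T]$, the truncated function $\mathds{1}_{[0,t)}\eta_i$ again solves \eqref{eq:dirdiffVI+}, hence coincides with $\eta_i$. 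You instead work at the level of the states: locality of \eqref{eq:V} gives $\S(u+\alpha h_i)=y$ on $[0,t-1/i]$; for $c>0$ the comparison principle forces $\S(u+\alpha h_i)(t)=r$, and constant-shift invariance plus uniqueness on $[t,T]$ gives coincidence with $y$ there; and in the strictly active case with $c<0$ you show the perturbed state catches up with $y$ at a time $\tau^*(\alpha)\to t$, exploiting that $y-u$ is nonincreasing but not constant to the right of $t$ while the perturbed state moves with $u$ as long as it is strictly inside $Z$. This yields a stronger statement than the paper's (the perturbed trajectories literally coincide with $y$ outside a shrinking window, not merely that the derivative vanishes), at the price of (i) invoking the restriction/uniqueness (``semigroup'') property of the stop on subintervals, which the paper never states explicitly but which indeed follows from \cref{lemma:gentest} together with \cref{th:Swellposed} applied on $[s,T]$, and (ii) the more delicate catch-up bookkeeping; your formula $y-\S(u+\alpha h_i)=(r-b_\alpha-\theta)^+$ is correct for small $\alpha$, though the phrase ``both $y$ and $\S(u+\alpha h_i)$ vary only through $\dd u$'' is imprecise --- only the perturbed state does so (wherever it lies in the open interval $(-r,r)$), which is all that the identity $\dd\bigl(y-\S(u+\alpha h_i)\bigr)=\dd(y-u)$ requires. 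You correctly identified the strictly active, $c<0$ case as the crux; the paper's derivative-level argument avoids that dynamical analysis altogether, which is precisely what its uniqueness argument for \eqref{eq:dirdiffVI+} buys.
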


\begin{proof}
If $t \in I(y)$, then we necessarily have $c=0$ and we can simply choose the sequence $h_i = 0$ for all $i$.
If $t=0$, then 
the sequence defined by $h_i = c$ for all $i$ satisfies all assertions because 
$\S'(u;c\mathds{1}_{[0,T]}) = 
\S'(u;c\mathds{1}_{[0,T]})_+ = 0$ by 
\cref{th:dirdiffVI} in view of \cref{eq:integralofconstant}.
We may thus assume that 
\[
0 < t \in B(y,u)\cup  A(y,u).
\]
Consider an arbitrary but fixed function $\varphi$ with the following properties 
\[
\varphi \in C^\infty(\R),
\quad \varphi(s) = 0 ~\forall s \in (-\infty,-1],
\quad \varphi(s) = 1 ~\forall s \in [0, \infty),
\quad \varphi'(s) \geq 0~\forall s \in \R.
\]
We define $\{h_i\}$ via 
\[
h_i(s) := c \varphi\left ( \frac{s - t}{1/i}\right )\quad \forall s \in [0,T]\quad \forall i \in \mathbb{N}. 
\]
This sequence clearly satisfies $\{h_i\} \subset C^\infty[0,T]$,
$h_i(s)  \to c \mathds{1}_{[t,T]}(s)$ for all $s \in [0,T]$ and $i \to \infty$,
and $\|h_i\|_\infty = |c|$ for all $i$.
 Due to the Lipschitz estimate
 \eqref{eq;LipschitzSinfty}, 
 this also implies that $\|\S'(u;h_i) \|_\infty \leq 2|c|$
holds for all $i$.

It remains to establish the pointwise convergence of $ \S'(u;h_i)_+$ to zero.
For this to hold, it suffices to prove that 
$\eta_i := \S'(u;h_i)_+$ satisfies
$\eta_i = 0$ on $[0,t-1/i] \cup [t,T]$ for all $i$ with $1/i < t$. 
That $\eta_i$ vanishes on $[0,t-1/i]$
follows easily from 
the fact that $h_i$ is zero 
on $[0, t-1/i]$,
\eqref{partialIntegration},
and 
\eqref{eq:dirdiffVI+s}
with $z = 0$, $z = 2 \eta$, and 
$0 < s \leq t - 1/i$.
Next, we prove that $\eta_i(t) = 0$ by distinguishing three cases.

Case 1: $t\in  A(y,u)$. In this case, we have $\eta_i(t) \in K^\ptw_{\crit}(y,u)(t) = \{0\}$. 

Case 2: $t\in B_+(y,u)$. In this case, 
we have
$\eta_i(t) \in K^\ptw_{\crit}(y,u)(t) = (-\infty, 0]$,
it holds $c \in [0, \infty)$, and 
 $h_i$ is nondecreasing on $[0,T]$. By  \cref{lemma:monotonicity},
 this yields 
 $\S(u+\alpha h_i) \geq \S(u)$ 
 in $[0, T]$ for all $\alpha > 0$ and all $i \in \mathbb{N}$.
 Hence, $\S'(u;h_i) \geq 0$ in $[0,T]$
 and, consequently, 
 $\eta_i = \S'(u;h_i)_+ \geq 0$ in $[0,T]$.
 It follows that $\eta_i(t) = 0$.

Case 3: $t\in B_-(y,u)$. In this case, it holds 
$\eta_i(t) \in K^\ptw_{\crit}(y,u)(t) = [0, \infty)$
and $c \in (-\infty, 0]$, and we can proceed 
completely analogously to Case 2 
(with reversed signs) to obtain that 
$\eta_i(t) = 0$.

It remains to prove that $\eta_i = 0$ on $(t,T]$ if $t < T$.
Let $\hat{\eta}_i :=  \mathds{1}_{[0,t]}\eta_i =  \mathds{1}_{[0,t)}\eta_i$.
By the definition of $h_i$, the function $\hat{\eta}_i - h_i$ has the constant value $-c$ on $[t,T]$.
Using \cref{eq:decomposeInterval} combined with \cref{eq:integralofconstant}, we obtain that,
for all $ z \in G\left ([0, T]; K^\ptw_{\crit}(y,u) \right)$, we have
\begin{align*}
\int_0^T (z  - \hat{\eta}_i)\dd(\hat{\eta}_i - h_i) 
&= \int_0^t (z  - \hat{\eta}_i)\dd(\hat{\eta}_i - h_i)
+ \int_t^T (z  - \hat{\eta}_i)\dd(\hat{\eta}_i - h_i)
\\ &=
\int_0^t (z  - \hat{\eta}_i)\dd(\hat{\eta}_i - h_i)
\\
&= \int_0^t (z  - \eta_i)\dd(\eta_i - h_i)
\ge 0,
\end{align*}
where the last inequality holds 
by \cref{cor:dirdiffVI+}. 
Since $\hat{\eta}_i(s) \in  K^\ptw_{\crit}(y,u)(s)$ for all $s\in  [0,T]$,
we conclude that $\hat{\eta}_i$ solves \cref{eq:dirdiffVI+} for $h = h_i$. As $\eta_i$ is the unique solution of
\cref{eq:dirdiffVI+}, we must have $\hat{\eta}_i = \eta_i$.
Thus, $\eta_i = 0$ on $(t,T]$ and the proof is complete.
\end{proof}

\section{Strong stationarity condition}\label{sec:7}

We are now in the position to prove 
the strong stationarity system \eqref{eq:strongstatsys-2}.

\begin{theorem}[strong stationarity]%
\label{th:main}%
Consider the situation in \cref{ass:standing} and suppose that 
$\bar u \in \Uad$ is a control with state $\bar y := \S(\bar u)$ 
such that the set $\R_+(\Uad - \bar u)$ is dense in $U$.
Then $\bar u$ 
is a Bouligand stationary point of \eqref{eq:P}, i.e., 
satisfies \eqref{eq:Bouligand} if and only if there exist
an adjoint state $\bar p \in BV[0,T]$ and a multiplier $\bar \mu \in G_r[0,T]^*$
such that the following system is satisfied:
\begin{equation}
\label{eq:strongstatsys}
\begin{gathered}
\bar p(0) = \bar p(T) = 0,
\quad 
\bar p(t) = \bar p(t-)~\forall t \in [0,T),
\\
\bar p(t-) \in K^\ptw_{\crit}(\bar y, \bar u)(t)~\forall t \in [0,T],
\\
\left \langle \bar \mu, z \right \rangle_{G_r} \geq 0\quad \forall z \in \KK_{G_r}^{\red,\crit}(\bar y, \bar u)  ,
\\
\int_0^T h \dd \bar p = \left \langle \partial_3 \JJ(\bar y, \bar y(T), \bar u), h \right \rangle_{U} ~\forall h \in U,
\\
-\int_0^T z \dd \bar p  
=
\left \langle \partial_1 \JJ(\bar y, \bar y(T), \bar u), z\right \rangle_{L^\infty}
+
\partial_2 \JJ(\bar y, \bar y(T), \bar u)z(T)
-
\left \langle \bar \mu, z \right \rangle_{G_r}
\\
\hspace{8.5cm}\forall z \in G_r[0,T]. 
\end{gathered}
\end{equation}
\end{theorem}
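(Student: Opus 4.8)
The proof establishes the asserted equivalence by treating the two implications separately; the substantial one is that Bouligand stationarity forces the existence of the multiplier pair.

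\emph{Sufficiency.} Assume $\bar p, \bar\mu$ satisfy \eqref{eq:strongstatsys}, fix $h \in \R_+(\Uad-\bar u)$, and set $\delta := \S'(\bar u;h)$, $\eta := \delta_+$. By \cref{cor:dirdiffVI+,corollary:dirdiffjumps} we have $\eta \in \KK_{G_r}^{\red,\crit}(\bar y,\bar u)$, and since $\delta$ and $\eta$ agree Lebesgue-a.e.\ on $(0,T)$ and at $t=T$, the left-hand side of \eqref{eq:Bouligand} equals $\langle\partial_1\JJ,\eta\rangle_{L^\infty} + \partial_2\JJ\,\eta(T) + \langle\partial_3\JJ,h\rangle_U$, all derivatives taken at $(\bar y,\bar y(T),\bar u)$. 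Substituting the fourth line of \eqref{eq:strongstatsys} with this $h$ and the fifth line with $z = \eta \in G_r[0,T]$ rewrites this expression as $\int_0^T(h-\eta)\dd\bar p + \langle\bar\mu,\eta\rangle_{G_r}$; the second summand is nonnegative by the third line of \eqref{eq:strongstatsys}. For the first, I would use the integration-by-parts formula (\cref{prop:partialIntegration}), the endpoint conditions $\bar p(0)=\bar p(T)=0$, and the left-continuity of $\bar p$ to identify $\int_0^T(h-\eta)\dd\bar p$ with $\int_0^T\bar p_-\dd(\eta-h)$, the boundary and jump contributions cancelling; testing \eqref{eq:dirdiffVI+} with $z=\bar p_- \in G([0,T];K^\ptw_{\crit}(\bar y,\bar u))$ (admissible by the second line of \eqref{eq:strongstatsys}) and with $z=0$, $z=2\eta$ then gives $\int_0^T\bar p_-\dd(\eta-h)\ge 0$. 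Hence \eqref{eq:Bouligand} holds.

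\emph{Necessity: reduction and construction.} First I would observe that $\Phi\colon U \to \R$, $\Phi(h):=\langle\partial_1\JJ,\S'(\bar u;h)_+\rangle_{L^\infty} + \partial_2\JJ\,\S'(\bar u;h)_+(T) + \langle\partial_3\JJ,h\rangle_U$, coincides with the left-hand side of \eqref{eq:Bouligand}, is positively homogeneous, and is $\|\cdot\|_U$-continuous -- the latter because $h\mapsto\S'(\bar u;h)_+$ is Lipschitz from $U$ into $(G[0,T],\|\cdot\|_\infty)$ by \eqref{eq;LipschitzSinfty} and the embedding $U\hookrightarrow C[0,T]$, because $\partial_1\JJ\in L^1(0,T)$, and because evaluation at $T$ is $\|\cdot\|_\infty$-continuous. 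Density of $\R_+(\Uad-\bar u)$ in $U$ then upgrades \eqref{eq:Bouligand} to $\Phi(h)\ge 0$ for all $h\in U$. Next, the adjoint state $\bar p$ is constructed as the unique solution of the adjoint of the linearized inclusion \eqref{eq:dirdiffVI+}: a backward-in-time rate-independent problem over $G([0,T];K^\ptw_{\crit}(\bar y,\bar u))$ whose datum carries $\partial_1\JJ$ as an absolutely continuous part and $\partial_2\JJ$ as a point mass at $t=T$ -- this terminal mass being responsible for the jump $\bar p(T-)\ne\bar p(T)=0$ -- corrected by a point mass at $t=0$ enforcing $\bar p(0)=0$; well-posedness and the properties $\bar p\in BV[0,T]$, $\bar p(t)=\bar p(t-)$ for $t<T$, $\bar p(t-)\in K^\ptw_{\crit}(\bar y,\bar u)(t)$ are obtained exactly as in \cref{cor:dirdiffVI+}. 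The multiplier $\bar\mu$ is then defined by the fifth line of \eqref{eq:strongstatsys}; it lies in $G_r[0,T]^*$ because $\partial_1\JJ\in L^1(0,T)$, evaluation at $T$ is continuous, and $z\mapsto\int_0^T z\dd\bar p$ is $\|\cdot\|_\infty$-continuous for $\bar p\in BV[0,T]$.

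\emph{Necessity: verification.} By construction the first two lines of \eqref{eq:strongstatsys} hold, so the remaining tasks are the coupling $\int_0^T h\dd\bar p=\langle\partial_3\JJ,h\rangle_U$ for $h\in U$ and the sign condition $\langle\bar\mu,z\rangle_{G_r}\ge 0$ for $z\in\KK_{G_r}^{\red,\crit}(\bar y,\bar u)$. For the coupling, I would test $\Phi\ge 0$ with the smooth critical radial directions of \cref{lemma:dirdifcritrad} (where $\S'(\bar u;h)=h$), pass to the full reduced critical cone via the temporal polyhedricity of \cref{theorem:tempoly}, and combine with the complementarity built into the adjoint problem and one more integration by parts. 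The sign condition follows along the same lines, with the ``atomic'' directions $c\mathds{1}_{[t,T]}$ that pin down the pointwise and jump behaviour of $\bar\mu$ provided by the smooth approximating sequences of \cref{lemma:polarconeatoms}, whose linearized derivatives $\S'(\bar u;h_i)_+$ tend to $0$ pointwise; a final check with \cref{lemma:polarconeatoms} and the pointwise description of $K^\ptw_{\crit}(\bar y,\bar u)$ confirms the endpoint relations and the sharp pointwise inclusion for $\bar p(t-)$.

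\emph{Main obstacle.} The crux is this last step. Since the control space $U$ must, by the discussion around \cref{sec:5}, be no larger than a space continuously embedded in $C[0,T]$, the discontinuous objects $\S'(\bar u;h)$, $\bar p$, $\bar\mu$ and the atomic directions $c\mathds{1}_{[t,T]}$ cannot be reached by sequences converging in $U$; overcoming this mismatch is exactly what \cref{theorem:tempoly,lemma:dirdifcritrad,lemma:polarconeatoms} are designed for, and the delicate part is to organize the pointwise limit passages so that the values $\bar p(t-)$, the directions of the jumps of $\bar p$, the relations $\bar p(0)=\bar p(T)=0$, and the sign of $\bar\mu$ are all preserved.
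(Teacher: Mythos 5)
Your sufficiency direction is essentially the paper's argument and is sound: testing \eqref{eq:dirdiffVI+} with a critical test function built from the adjoint state and undoing the duality by integration by parts, with the surviving jump term at $T$ controlled through $\bar p(T-)\in K^\ptw_{\crit}(\bar y,\bar u)(T)$ and \cref{corollary:dirdiffjumps} (your variant with $z=\bar p_-$ absorbs that term into the identity $\int_0^T(h-\eta)\dd\bar p=\int_0^T\bar p_-\dd(\eta-h)$, which is correct). One slip: the tool you need there is the general integration-by-parts formula for the Kurzweil--Stieltjes integral (\cite[Theorem 6.4.2]{Monteiro2019}, as used in the paper), not \cref{prop:partialIntegration}, which is only the special identity for $\int g\dd g$.

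The necessity direction, however, has a genuine gap. You propose to \emph{construct} $\bar p$ as the solution of an unspecified ``adjoint backward-in-time rate-independent problem'' driven by $\partial_1\JJ$ and a point mass carrying $\partial_2\JJ$, with well-posedness ``exactly as in \cref{cor:dirdiffVI+}''. No such problem is formulated, and \cref{cor:dirdiffVI+} concerns the forward problem for $\S'(\bar u;h)_+$ with continuous input and zero initial datum; it does not cover a backward constrained problem with measure data, so the existence, uniqueness, and the asserted pointwise properties of this $\bar p$ are not justified. More importantly, the fourth line of \eqref{eq:strongstatsys}, $\int_0^T h\dd\bar p=\langle\partial_3\JJ(\bar y,\bar y(T),\bar u),h\rangle_U$ for all $h\in U$, cannot be recovered the way you suggest: testing $\Phi\ge 0$ with elements of $\KK_{C^\infty}^{\rad,\crit}(\bar y,\bar u)$ and passing to $\KK_{G_r}^{\red,\crit}(\bar y,\bar u)$ via \cref{theorem:tempoly} only yields an inequality on a cone of critical directions (this is exactly what gives the sign condition on $\bar\mu$), and such directions are not dense in $U$, so no identity valid for \emph{all} $h\in U$ can follow. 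The missing idea is the representation step: from the two-sided validity of $\Phi(h)\ge 0$ on $U$ and the Lipschitz bound $\|\S'(\bar u;h_1)-\S'(\bar u;h_2)\|_\infty\le 2\|h_1-h_2\|_\infty$ one deduces $|\langle\partial_3\JJ,h\rangle_U|\le C\|h\|_\infty$, extends this functional by Hahn--Banach to $C[0,T]^*$, and obtains $\bar p\in BV[0,T]$ (normalized to be left-continuous on $(0,T)$ with $\bar p(T)=0$, using \cref{lemma:intgzeroexcept}) via the Riesz representation theorem; $\bar\mu$ is then \emph{defined} by the fifth line, the endpoint identity $\bar p(0)=\bar p(T)=0$ comes from testing with constants ($\S'(\bar u;c\mathds{1}_{[0,T]})_+=0$), and only afterwards do \cref{lemma:dirdifcritrad}, \cref{theorem:tempoly} (sign of $\bar\mu$) and \cref{lemma:polarconeatoms} (the inclusion $\bar p(t-)\in K^\ptw_{\crit}(\bar y,\bar u)(t)$, not the sign of $\bar\mu$ as your sketch suggests) enter. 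Without this Hahn--Banach/Riesz step your construction never connects $\bar p$ to $\partial_3\JJ$, and the claimed ``point mass at $t=0$ enforcing $\bar p(0)=0$'' has no counterpart in a correct argument.
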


\begin{proof}
We begin with the proof of the implication ``\eqref{eq:Bouligand} $\Rightarrow$ \eqref{eq:strongstatsys}'':
Suppose that a control $\bar u \in \Uad$ with state $\bar y := \S(\bar u)$ 
is given such that the set $\R_+(\Uad - \bar u)$ is dense in $U$
and such that \eqref{eq:Bouligand} holds. Then 
it follows from \eqref{eq:Bouligand},
the fact that \eqref{eq;LipschitzSinfty} implies that 
$
\|\S'(\bar u;h_1) - \S'(\bar u; h_2)\|_\infty \leq 2\|h_1 - h_2\|_\infty
$
holds for all $h_1, h_2 \in CBV[0,T]$,
the inclusion $U \subset CBV[0,T]$,
and the continuity 
of the embedding $U \hookrightarrow C[0,T]$ that
\begin{equation}
\label{eq:Bouligandstat2}
\begin{aligned}
\left \langle \partial_1 \JJ(\bar y, \bar y(T), \bar u), \S'(\bar u; h) \right \rangle_{L^\infty} &+
\partial_2 \JJ(\bar y, \bar y(T), \bar u)\S'(\bar u; h)(T)
\\
&+
\left \langle \partial_3 \JJ(\bar y, \bar y(T), \bar u), h \right \rangle_{U}
\geq 
0
\qquad 
\forall h \in U.
\end{aligned}
\end{equation}
Again due to \eqref{eq;LipschitzSinfty}
and since $-h \in U$ holds for all $h \in U$, \eqref{eq:Bouligandstat2} yields
\[
\left | \left \langle \partial_3 \JJ(\bar y, \bar y(T), \bar u), h \right \rangle_{U} \right |
\leq 2 \left ( \left \| \partial_1 \JJ(\bar y, \bar y(T), \bar u)\right \|_{L^1} 
 + \left | \partial_2 \JJ(\bar y, \bar y(T), \bar u) \right |
\right )\|h\|_{\infty}
\]
for all $h \in U$. In combination with the Hahn-Banach theorem, this shows that 
the linear functional $U \ni h \mapsto \left \langle \partial_3 \JJ(\bar y, \bar y(T), \bar u), h \right \rangle_{U}  \in \R$
can be extended to an element of 
the dual space $C[0,T]^*$.
In view of the classical Riesz representation theorem 
(see, e.g., \cite[section 8.1]{Monteiro2019})
and \cref{lemma:intgzeroexcept}, this means that there exists a function
$\bar p \in BV[0,T]$ satisfying $\bar p(t) = \bar p(t-)$ for all $t \in (0,T)$, $\bar p(T) = 0$, and 
\[
\int_0^T h \dd \bar p = \left \langle \partial_3 \JJ(\bar y, \bar y(T), \bar u), h \right \rangle_{U} ~\forall 
h \in U.
\]
Since $\partial_1 \JJ(\bar y, \bar y(T), \bar u) \in L^1(0,T)$, 
$\S'(\bar u; h) = \S'(\bar u; h)_+$ a.e.\ in $(0,T)$ by \cite[Theorem 2.3.2]{Monteiro2019},
and $\S'(\bar u; h)(T) 
= \S'(\bar u; h)(T+)
= \S'(\bar u; h)_+(T)$ by definition, we may now 
rewrite \eqref{eq:Bouligandstat2} as follows:
\begin{equation}
\label{eq:randomeq263536}
\begin{aligned}
&\int_0^T h \dd \bar p 
+
\left \langle \partial_1 \JJ(\bar y, \bar y(T), \bar u), \S'(\bar u; h)_+ \right \rangle_{L^\infty}
+
\partial_2 \JJ(\bar y, \bar y(T), \bar u)\S'(\bar u; h)_+(T) \geq 0
\\
&\hspace{10.5cm}
\forall h \in U.
\end{aligned}
\end{equation}
Note that, again due to
the Lipschitz estimate 
$\|\S'(\bar u;h_1) - \S'(\bar u; h_2)\|_\infty \leq 2\|h_1 - h_2\|_\infty$
for $h_1, h_2 \in CBV[0,T]$ and since $U$ is dense in $C[0,T]$,
\eqref{eq:randomeq263536} remains valid when the test space $U$
is replaced by $CBV[0,T]$.
We define $\bar \mu \in G_r[0,T]^*$ via 
\[
\left \langle \bar \mu, z \right \rangle_{G_r} := 
\int_0^T z \dd \bar p 
+
\left \langle \partial_1 \JJ(\bar y, \bar y(T), \bar u), z \right \rangle_{L^\infty}
+
\partial_2 \JJ(\bar y, \bar y(T), \bar u)z(T)
\quad 
\forall z \in G_r[0,T].
\]
Then the last line in \eqref{eq:strongstatsys} holds, and it follows from \eqref{eq:randomeq263536} with test space $CBV[0,T]$ and 
\cref{lemma:dirdifcritrad} that 
\[
\int_0^T z \dd \bar p 
+
\left \langle \partial_1 \JJ(\bar y, \bar y(T), \bar u), z \right \rangle_{L^\infty}
+
\partial_2 \JJ(\bar y, \bar y(T), \bar u)z(T) \geq 0
\quad \forall  z \in \KK_{C^\infty}^{\rad,\crit}(\bar y, \bar u).
\]
Due to \cref{theorem:tempoly} and the bounded convergence theorem (\cref{th:boundedconv}),
we can extend the last inequality to the set $\KK_{G_r}^{\red,\crit}(\bar y, \bar u)$ by approximation, i.e., 
we have 
\[
\int_0^T z \dd \bar p 
+
\left \langle \partial_1 \JJ(\bar y, \bar y(T), \bar u), z \right \rangle_{L^\infty}
+
\partial_2 \JJ(\bar y, \bar y(T), \bar u)z(T) = \left \langle \bar \mu, z \right \rangle_{G_r} \geq 0
\]
for all $z \in \KK_{G_r}^{\red,\crit}(\bar y, \bar u)$.
This proves the third line in \cref{eq:strongstatsys}.
It remains to establish the pointwise properties of $\bar p$ in \cref{eq:strongstatsys}. 
To this end, we again use that \cref{cor:dirdiffVI+} and \cref{eq:integralofconstant} imply that
$\S'(\bar u; c\mathds{1}_{[0,T]})_+ = 0$ holds
for all $c \in \R$.
By \eqref{eq:randomeq263536} with test 
space $CBV[0,T]$, this yields
\begin{equation*}
\begin{aligned}
0 \leq c\int_0^T  \dd \bar p = c\left ( \bar p(T) - \bar p(0)\right )\quad \forall c \in \R.
\end{aligned}
\end{equation*}
Thus, $\bar p(0) = \bar p(T)$.
Since $\bar p(T) = 0$
and $\bar p(t) = \bar p(t-)$ for all $t \in (0,T)$, and since 
$\bar p(0) = \bar p(0-)$ holds by definition, 
this establishes 
the first line of \cref{eq:strongstatsys}.
Next, by invoking \cref{lemma:polarconeatoms},
by 
setting $h = h_i$ in \eqref{eq:randomeq263536} 
with test space $CBV[0,T]$,
and by passing
to the limit $i\to\infty$ by means of \cref{th:boundedconv} and the dominated convergence theorem,
we obtain that, for every $t \in [0,T]$ and every $c \in K^\ptw_{\crit}(\bar y,\bar u)(t)^\circ$, we have 
\begin{equation}
\label{eq:randomeq2635}
0 \leq 
\int_0^T c \mathds{1}_{[t,T]}  \dd \bar p 
= c \left ( \bar p(T) - \bar p(t-) \right )
= - c \bar p(t-).
\end{equation}
Here, the last two equations follow from \cite[Lemma 6.3.3]{Monteiro2019} 
and the identity
$\bar p(T) = 0$.
By using the definition \eqref{eq:ptwnormalconedef} of the 
polar cone $K^\ptw_{\crit}(\bar y, \bar u)(t)^\circ$
in \eqref{eq:randomeq2635}, one readily obtains that 
$\bar p(t-) \in K^\ptw_{\crit}(\bar y, \bar u) (t)$
holds for all $t \in [0,T]$.
This establishes the second line in \eqref{eq:strongstatsys}
and proves, in combination with the previous steps, that the
strong stationarity system \eqref{eq:strongstatsys} 
is indeed a necessary condition for Bouligand stationarity. 

Next, we prove the implication 
``\eqref{eq:strongstatsys} $\Rightarrow$ \eqref{eq:Bouligand}''.
Suppose that $\bar u \in \Uad$ is a control 
with state $\bar y := \S(\bar u)$
such that there exist $\bar p \in BV[0,T]$ and
$\bar \mu \in G_r[0,T]^*$  satisfying \eqref{eq:strongstatsys}. 
Assume further that a direction $h \in U$ is given 
and define $\eta := \S'(\bar u;h)_+$. 
Then it follows from 
the properties of $\bar p$,
\eqref{eq:dirdiffVI+} with $z := \eta + \bar p$,
and the integration by parts formula 
for the Kurzweil-Stieltjes integral
\cite[Theorem 6.4.2]{Monteiro2019} 
that
\begin{equation*}
\begin{aligned}
0  \leq \int_0^T \bar p\dd(\eta- h)
&=
\int_0^T (h - \eta) \dd \bar p
+
\bar p(T)(\eta- h)(T) - \bar p(0)(\eta- h)(0)
\\
&\qquad +
\sum_{t \in [0,T]} \left ( \bar p(t) - \bar p(t-)\right ) \left ( (\eta- h)(t)  - (\eta- h)(t-) \right ) 
\\
&\qquad - \sum_{t \in [0,T]} \left ( \bar p(t) - \bar p(t+)\right )  \left ( (\eta- h)(t)  - (\eta- h)(t+) \right ).
\end{aligned}
\end{equation*}
Due to the identities $\bar p(0) = \bar p(T) = 0$ and $\eta(0) = \eta(0-) = 0$ and due to the left- and right-continuity 
properties of 
$\bar p$, $h$, and $\eta = \S'(\bar u;h)_+$, the last estimate simplifies to 
\[
0 \leq 
\int_0^T (h - \eta) \dd \bar p
- \bar p(T-) \left ( \eta(T)  - \eta(T-) \right ).
\]
Note that \eqref{eq:dirdiffjumps1},
$\bar p(T-) \in K^\ptw_{\crit}(\bar y,\bar u) (T)$,
and the convention $\eta(T) = \eta(T+)$
imply that 
$\bar p(T-) ( \eta(T)  - \eta(T-)   )
=
\bar p(T-) ( \eta(T+)  - \eta(T-)   )
\geq 0$ holds.
We thus obtain 
\[
0 \leq 
\int_0^T (h - \eta) \dd \bar p
=
\int_0^T h \dd \bar p
-
\int_0^T \eta \dd \bar p,
\]
and, by the last three lines of \cref{eq:strongstatsys} and the properties of $\eta$, 
\begin{equation*}
\begin{aligned}
0 &\leq 
\left \langle \partial_3 \JJ(\bar y, \bar y(T), \bar u), h \right \rangle_{U}
+ 
\left \langle \partial_1 \JJ(\bar y, \bar y(T), \bar u), \eta \right \rangle_{L^\infty}
+
\partial_2 \JJ(\bar y, \bar y(T), \bar u)\eta(T)
-
\left \langle \bar \mu, \eta \right \rangle_{G_r}
\\
&\leq 
\left \langle \partial_3 \JJ(\bar y, \bar y(T), \bar u), h \right \rangle_{U}
+ 
\left \langle \partial_1 \JJ(\bar y, \bar y(T), \bar u), \eta\right \rangle_{L^\infty}
+
\partial_2 \JJ(\bar y, \bar y(T), \bar u)\eta(T).
\end{aligned}
\end{equation*}
If we now exploit that 
$\partial_1 \JJ(\bar y, \bar y(T), \bar u) \in L^1(0,T)$,
that $\S'(\bar u; h)(T) = \S'(\bar u; h)(T+)$,
and that $\eta = \S'(\bar u;h)$ a.e.,
then \eqref{eq:Bouligand} follows. This completes the proof.
\end{proof}

Note that, in the case $T \in I(\bar y)$, there exists 
$m > 0$ such that the function
$z(t) := c \mathds{1}_{[T -\varepsilon, T]}(t) (t - T + \varepsilon)/\varepsilon$ is an element of
\smash{$\KK_{G_r}^{\red,\crit}(\bar y, \bar u)$} 
for all $c \in \R$ and all $0 < \varepsilon < m$.
For such a function $z$, the third line of \eqref{eq:strongstatsys}
becomes $\left \langle \bar \mu, z \right \rangle_{G_r} = 0$.
Using this
in the fifth line of \eqref{eq:strongstatsys}
and 
subsequently passing to the limit $\varepsilon \to 0^+$
by means of \cref{th:boundedconv}
yields,
due to the $L^1$-regularity of $\partial_1 \JJ(\bar y, \bar y(T), \bar u)$
and \cref{eq:singlepointmass},
that
\[
-c \int_0^T \mathds{1}_{\{T\}} \dd \bar p  
=
-c (\bar p(T) - \bar p(T-))
=
\partial_2 \JJ(\bar y, \bar y(T), \bar u)c\qquad \forall c \in \R.
\]
Thus,
$\bar p(T-) - \bar p(T) =  \partial_2 \JJ(\bar y, \bar y(T), \bar u)$
and we obtain that the partial derivative 
$\partial_2 \JJ(\bar y, \bar y(T), \bar u) \in \R$ affects the jump
of 
$\bar p$ at $T$, as mentioned in \cref{sec:1}.
We remark that, by redefining $\bar p$, this implicit jump condition 
on the adjoint state in  \eqref{eq:strongstatsys} can 
also be transformed into a condition on the function value at $T$.
Indeed, by introducing the modified adjoint state 
$\bar q := \bar p + \partial_2 \JJ(\bar y, \bar y(T), \bar u)\mathds{1}_{\{T\}} \in BV[0,T]$, 
by using the integration by parts formula in \cite[Theorem 6.4.2]{Monteiro2019}
in the fourth line of \eqref{eq:strongstatsys}, 
and by employing \eqref{eq:singlepointmass} and \cite[Lemma 6.3.2]{Monteiro2019}, one easily checks that 
the strong stationarity system in \cref{th:main} can also be formulated as follows:
\begin{equation*}
\begin{gathered}
\bar q(0) =  0,
\quad
\bar q(T) = \partial_2 \JJ(\bar y, \bar y(T), \bar u),
\quad 
\bar q(t) = \bar q(t-)~\forall t \in [0,T),
\\
\bar q(t-) \in K^\ptw_{\crit}(\bar y, \bar u)(t)~\forall t \in [0,T],
\\
\left \langle \bar \mu, z \right \rangle_{G_r} \geq 0\quad \forall z \in \KK_{G_r}^{\red,\crit}(\bar y, \bar u)  ,
\\
- \int_0^T \bar q \dd h = \left \langle \partial_3 \JJ(\bar y, \bar y(T), \bar u), h \right \rangle_{U} ~\forall h \in U,
\\
-\int_0^T z \dd \bar q
=
\left \langle \partial_1 \JJ(\bar y, \bar y(T), \bar u), z\right \rangle_{L^\infty}
-
\left \langle \bar \mu, z \right \rangle_{G_r}\quad \forall z \in G_r[0,T]. 
\end{gathered}
\end{equation*}

Regarding the assumption that the set $\R_+(\Uad - \bar u)$
is dense in $U$, we would like to point out that 
this so-called  ``ample control'' condition 
in \cref{th:main} is rather restrictive and 
rarely satisfied if $\Uad \neq U$. Using techniques
from \cite{Wachmuth2014}, it might be possible to establish a strong stationarity system for \eqref{eq:P} 
also under weaker assumptions on the control constraints.
We leave this topic for future research.

\appendix
\section{Results on the Kurzweil-Stieltjes integral}\label{sec:appendix}%
Let $a,b \in \R$ with $a < b$ be given.
For $f, g \in G[a,b]$, the Kurzweil-Stieltjes integral 
with \emph{integrand} $f$ and 
\emph{integrator} $g$ 
exists 
if at least one of the functions $f$ and $g$ 
has bounded variation,
see \cite[Theorem 6.3.11]{Monteiro2019}.
In this case, it yields a real number which we denote by
\[
\int_a^b f\dd g \qquad \text{or} \qquad \int_a^b f(t)\dd g(t).
\]
The Kurzweil-Stieltjes integral 
coincides with the Riemann-Stieltjes integral whenever the latter exists, see \cite[Theorem 6.2.12]{Monteiro2019}.
This holds in particular if $f\in C[a,b]$ and $g\in BV[a,b]$, see \cite[Theorem 5.6.1]{Monteiro2019}.
If $c\in\R$ is interpreted as a constant function, then
it holds
\begin{equation}\label{eq:integralofconstant}
\int_a^b c\dd g = c(g(b) - g(a))
 \qquad \text{and} \qquad
 \int_a^b f\dd c = 0
\end{equation}
for all $f, g \in G[a,b]$, 
see \cite[Remark 6.3.1]{Monteiro2019}.

The Kurzweil-Stieltjes integral is linear w.r.t.\
the integrand $f$ and w.r.t.\
the integrator $g$,
see \cite[Theorem 6.2.7]{Monteiro2019}.
Further,
for all $c\in (a,b)$, it holds
\begin{equation}\label{eq:decomposeInterval}
\int_a^b f\dd g = \int_a^c f\dd g + \int_c^b f\dd g
\end{equation}
provided the first integral exists, 
see \cite[Theorems 6.2.9, 6.2.10]{Monteiro2019}. 
For $t\in [a,b]$ and $g \in G[a,b]$, we have (see \cite[Lemma 6.3.3]{Monteiro2019})
\begin{equation}\label{eq:singlepointmass}
\int_a^b \mathds{1}_{\{t\}}\dd g = g(t+) - g(t-)
\end{equation}
with the conventions $g(b+) := g(b)$ and 
$g(a-) := g(a)$.
In particular, the integral in 
\eqref{eq:singlepointmass} equals zero if $g$ is continuous at $t$.

\begin{lemma}\label{lemma:subintervals}
Let $f\in G[a,b]$, $g\in BV_r[a,b]$, 
$a\le s < \tau\le b$, and 
$J := (s,\tau]$. 
Then
\begin{equation}\label{eq:subintervals}
\int_s^\tau f\dd g = \int_a^b \mathds{1}_J f\dd g.
\end{equation}
If $g\in CBV[a,b]$, then
\cref{eq:subintervals} 
is also true 
for $J = [s,\tau]$, $J = (s,\tau)$, and $J = [s,\tau)$.
\end{lemma}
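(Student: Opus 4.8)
The plan is to reduce the identity \eqref{eq:subintervals} to three elementary facts: the interval-additivity formula \eqref{eq:decomposeInterval}, the single-point-mass formula \eqref{eq:singlepointmass}, and linearity of the Kurzweil--Stieltjes integral in its integrand. All integrals that will appear exist, since $g$ has bounded variation and every integrand is the product of the regulated function $f$ with a step function and hence regulated. The reason the hypothesis $g \in BV_r[a,b]$ (rather than merely $g \in BV[a,b]$) is needed is that right-continuity is exactly what makes the boundary point-masses generated by the indicator functions drop out.

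First I would handle $J = (s,\tau]$ in the generic case $a < s < \tau < b$. Applying \eqref{eq:decomposeInterval} at $c = s$ and then at $c = \tau$ splits $\int_a^b \mathds{1}_J f\dd g$ as $\int_a^s \mathds{1}_J f\dd g + \int_s^\tau \mathds{1}_J f\dd g + \int_\tau^b \mathds{1}_J f\dd g$. On $[a,s]$ the integrand vanishes identically, so that term is zero by \eqref{eq:integralofconstant}. On $[s,\tau]$ one has the pointwise identity $\mathds{1}_J f = f - f(s)\mathds{1}_{\{s\}}$, so by linearity and \eqref{eq:singlepointmass} applied on $[s,\tau]$ --- where the convention reads $g(s-) = g(s)$ --- this term equals $\int_s^\tau f\dd g - f(s)\bigl(g(s+) - g(s)\bigr)$, which collapses to $\int_s^\tau f\dd g$ by right-continuity of $g$ at $s$. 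On $[\tau,b]$ one has $\mathds{1}_J f = f(\tau)\mathds{1}_{\{\tau\}}$, so \eqref{eq:singlepointmass} applied on $[\tau,b]$ --- where now $g(\tau-) = g(\tau)$ by convention --- gives $f(\tau)\bigl(g(\tau+) - g(\tau)\bigr) = 0$, again by right-continuity. Summing yields \eqref{eq:subintervals}. The degenerate cases $s = a$, $\tau = b$, or both, require no new idea: one simply omits the split at the endpoint concerned, and the only difference is that an extra term $-f(a)\mathds{1}_{\{a\}}$ may be produced (when $s=a$), whose integral vanishes because $g(a-) = g(a)$ by convention and $g(a+) = g(a)$ by right-continuity.

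For the claim under the stronger hypothesis $g \in CBV[a,b]$, I would first note that $CBV[a,b] \subset BV_r[a,b]$, so the first part already gives $\int_a^b \mathds{1}_{(s,\tau]} f\dd g = \int_s^\tau f\dd g$. Since $g$ is now continuous, $g(t+) = g(t-)$ for all $t$, so \eqref{eq:singlepointmass} and linearity force $\int_a^b \mathds{1}_{\{t\}} f\dd g = f(t)\bigl(g(t+) - g(t-)\bigr) = 0$ for every $t \in [a,b]$. Combining this with the elementary identities $\mathds{1}_{[s,\tau]} = \mathds{1}_{(s,\tau]} + \mathds{1}_{\{s\}}$, $\mathds{1}_{(s,\tau)} = \mathds{1}_{(s,\tau]} - \mathds{1}_{\{\tau\}}$, and $\mathds{1}_{[s,\tau)} = \mathds{1}_{(s,\tau]} + \mathds{1}_{\{s\}} - \mathds{1}_{\{\tau\}}$, linearity of the integral in the integrand shows that all four interval types yield the same value $\int_s^\tau f\dd g$.

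The only real obstacle here is bookkeeping: one must track the endpoint conventions for one-sided limits in \eqref{eq:singlepointmass} each time the interval of integration changes, and verify that in every subcase the potential boundary jump term is annihilated either by a convention ($g(a-) = g(a)$, $g(b+) = g(b)$) or by genuine right-continuity at an interior point. Once this is laid out carefully, the rest is a routine use of additivity and linearity.
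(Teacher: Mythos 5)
Your proof is correct, but it takes a different route from the paper: the paper disposes of this lemma in one line by citing it as a special case of a general result on indicator-truncated integrands in the Monteiro--Slav\'{\i}k--Tvrd\'{y} monograph (their Theorem 6.9.7), whereas you reconstruct the identity from the elementary facts collected in the appendix -- additivity over subintervals \eqref{eq:decomposeInterval}, the point-mass formula \eqref{eq:singlepointmass} with its endpoint conventions, the constant-integrand identity \eqref{eq:integralofconstant}, and linearity in the integrand -- after noting that all integrals exist because $\mathds{1}_J f$ is regulated and $g\in BV$. Your bookkeeping checks out: on $[s,\tau]$ the decomposition $\mathds{1}_J f = f - f(s)\mathds{1}_{\{s\}}$ and on $[\tau,b]$ the identity $\mathds{1}_J f = f(\tau)\mathds{1}_{\{\tau\}}$ reduce everything to point masses $g(\cdot+)-g(\cdot-)$ that vanish by right-continuity of $g$ (or by the endpoint conventions in the degenerate cases $s=a$, $\tau=b$), and the second part correctly exploits $CBV[a,b]\subset BV_r[a,b]$ together with $\int_a^b \mathds{1}_{\{t\}} f \dd g = f(t)\bigl(g(t+)-g(t-)\bigr)=0$ for continuous $g$ to pass between the four interval types. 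What your approach buys is self-containedness and transparency: it makes explicit exactly where right-continuity (respectively continuity) of the integrator is needed, which is the conceptual point of the hypothesis $g\in BV_r[a,b]$; what the paper's approach buys is brevity, delegating the (routine but convention-sensitive) endpoint analysis to the standard reference.
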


\begin{proof}
This is a special case of 
\cite[Theorem 6.9.7]{Monteiro2019}.
\end{proof}

\begin{lemma}\label{lemma:intgzeroexcept}
Let $g\in BV[a,b]$ be given such that
$g(a) = g(b) = 0$ holds
and such that the set $\{t\in [a,b]\colon g(t)\neq 0\}$ is finite or countably infinite. Then 
\[
\int_a^b f\dd g = 0 \qquad \forall f\in G[a,b].
\]
\end{lemma}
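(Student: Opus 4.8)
The plan is to reduce the statement to the single identity $\int_a^b f\dd \mathds{1}_{\{\tau\}} = 0$ for $\tau\in(a,b)$ and $f\in G[a,b]$, and to obtain the general case by approximating $g$ in variation norm. First I would observe that $g(t-) = g(t+) = 0$ for every $t\in[a,b]$: in any nondegenerate subinterval of $[a,b]$ the set $\{s : g(s)=0\}$ is the complement of a countable set, hence dense, so each one-sided limit of $g$ can be computed along a sequence on which $g$ vanishes (at $a$ and $b$ one additionally uses the conventions $g(a-):=g(a)=0$ and $g(b+):=g(b)=0$). In particular $D := \{t\in[a,b] : g(t)\neq 0\}\subset(a,b)$. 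Next I would show $\sum_{\tau\in D}|g(\tau)| \le \tfrac12\var(g) < \infty$: given finitely many $\sigma_1<\dots<\sigma_p$ in $D$ and $\varepsilon>0$, the relations $g(\sigma_j\pm)=0$ let one enclose each $\sigma_j$ between nearby points of $[a,b]$ at which $|g|<\varepsilon$, arranged in increasing order; evaluating $\var(g)$ on the resulting partition gives $\var(g) \ge 2\sum_j|g(\sigma_j)| - 2p\varepsilon$, and letting $\varepsilon\to 0$ and taking the supremum over finite subsets of $D$ yields the claim. Enumerating $D$ as $\{\tau_1,\tau_2,\dots\}$ (or a finite list) and setting $g_n := \sum_{k=1}^{n} g(\tau_k)\mathds{1}_{\{\tau_k\}}\in BV[a,b]$, the function $g-g_n$ vanishes off $\{\tau_k : k>n\}\subset(a,b)$ and satisfies $g(a)-g_n(a)=g(b)-g_n(b)=0$; since any such function $h$ obeys $\var(h)\le 2\sum_{\tau}|h(\tau)|$, we get $\var(g-g_n)\le 2\sum_{k>n}|g(\tau_k)|\to 0$.

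Using the elementary estimate $|\int_a^b \varphi\dd\psi|\le \|\varphi\|_\infty\var(\psi)$ for the Kurzweil--Stieltjes integral (immediate from the Riemann--Stieltjes sums in the definition, cf.\ \cite{Monteiro2019}) applied with $\psi = g-g_n$, it follows that $\int_a^b f\dd g = \lim_{n\to\infty}\int_a^b f\dd g_n$, and by linearity of the integral in its integrator $\int_a^b f\dd g_n = \sum_{k=1}^{n} g(\tau_k)\int_a^b f\dd\mathds{1}_{\{\tau_k\}}$. Thus it remains to prove $\int_a^b f\dd\mathds{1}_{\{\tau\}} = 0$ for $\tau\in(a,b)$. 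I would argue directly from the gauge definition: pick a gauge $\delta$ on $[a,b]$ with $\delta(x)\le |x-\tau|$ for all $x\neq\tau$. For any $\delta$-fine tagged partition, if $\tau$ is not a division point then $\mathds{1}_{\{\tau\}}$ is zero at every division point and the Riemann--Stieltjes sum vanishes; if $\tau$ coincides with an interior division point, the choice of $\delta$ forces the two subintervals adjacent to $\tau$ to carry the tag $\tau$, so their contributions $f(\tau)\big(\mathds{1}_{\{\tau\}}(\tau)-0\big)$ and $f(\tau)\big(0-\mathds{1}_{\{\tau\}}(\tau)\big)$ cancel and the sum is again $0$; hence $\int_a^b f\dd\mathds{1}_{\{\tau\}}=0$. (Alternatively, this identity follows from \eqref{eq:singlepointmass} combined with the integration-by-parts formula of \cite{Monteiro2019}.) Putting the two displays together gives $\int_a^b f\dd g = \lim_n 0 = 0$, as claimed.

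The main obstacle is precisely the last step: the fact that $\int_a^b f\dd\mathds{1}_{\{\tau\}}=0$, despite the "dual" identity \eqref{eq:singlepointmass} reading $\int_a^b \mathds{1}_{\{\tau\}}\dd f = f(\tau+)-f(\tau-)$, which is generally nonzero. This asymmetry between the roles of integrand and integrator is what makes the lemma nontrivial and is where care with the definition (or with integration by parts and its jump terms) is needed. The remaining ingredients — vanishing of the one-sided limits of $g$, summability of $\{g(\tau)\}_{\tau\in D}$, and variation-norm convergence $g_n\to g$ — are routine once that point is settled.
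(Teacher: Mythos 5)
Your argument is correct, but it takes a genuinely different route from the paper: the paper disposes of this lemma in one line by citing it as a special case of \cite[Lemma 6.3.15]{Monteiro2019}, whereas you reprove it from first principles. Your route exploits the $BV$ hypothesis twice — first to deduce from $g(t-)=g(t+)=0$ (density of the zero set) that the values $g(\tau)$, $\tau\in D$, are absolutely summable with $\sum_{\tau\in D}|g(\tau)|\le\tfrac12\var(g)$, and then to approximate $g$ in variation norm by the finite sums $g_n=\sum_{k\le n}g(\tau_k)\mathds{1}_{\{\tau_k\}}$, using the elementary bound $|\int_a^b f\dd\psi|\le\|f\|_\infty\var(\psi)$ and linearity in the integrator to reduce everything to the single identity $\int_a^b f\dd\mathds{1}_{\{\tau\}}=0$, which you settle by a clean gauge argument (the gauge $\delta(x)\le|x-\tau|$ forces the subintervals meeting $\tau$ to carry the tag $\tau$, so every $\delta$-fine sum vanishes exactly). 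All steps check out, including the often-overlooked asymmetry you highlight between $\int f\dd\mathds{1}_{\{\tau\}}=0$ and $\int\mathds{1}_{\{\tau\}}\dd f=f(\tau+)-f(\tau-)$. What the comparison buys: the cited lemma in \cite{Monteiro2019} yields the conclusion more generally (it does not really need the bounded-variation structure of $g$ that your summability-plus-approximation scheme relies on), and keeps the paper short; your proof, on the other hand, is self-contained, stays entirely within the hypotheses of the statement, and makes transparent exactly where the countability of $\{g\neq 0\}$ and the endpoint conditions $g(a)=g(b)=0$ enter.
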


\begin{proof}
This is a special case of 
\cite[Lemma 6.3.15]{Monteiro2019}.
\end{proof}

\begin{proposition}\label{prop:partialIntegration}
Let $g\in BV_r[a,b]$. Then
\begin{equation}\label{partialIntegration}
\int_a^b g\dd g = \frac{1}{2}(g(b)^2 - g(a)^2) 
+ \frac{1}{2} \sum_{t\in [a,b]} (g(t) - g(t-))^2.
\end{equation}
\end{proposition}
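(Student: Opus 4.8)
The plan is to obtain the identity as a direct specialization of the integration by parts formula for the Kurzweil--Stieltjes integral, \cite[Theorem 6.4.2]{Monteiro2019}, applied with the integrand and the integrator \emph{both} equal to $g$. First I would record that $\int_a^b g\dd g$ is well defined: since $g\in BV_r[a,b]\subset BV[a,b]$, the integrator has bounded variation and existence follows from \cite[Theorem 6.3.11]{Monteiro2019}. I would also observe that the jump sum $\sum_{t\in[a,b]}(g(t)-g(t-))^2$ on the right-hand side is finite, since $g$ has at most countably many points of discontinuity, $\sum_{t\in[a,b]}|g(t)-g(t-)|\le\var(g)<\infty$, and hence $\sum_{t\in[a,b]}(g(t)-g(t-))^2\le \var(g)^2<\infty$; in any case the absolute convergence of the jump sums is part of the conclusion of \cite[Theorem 6.4.2]{Monteiro2019}.

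The key step is then to write out the integration by parts formula in the same form in which it is already invoked in the proof of \cref{th:main}. With the shorthand $\Delta^-\phi(t):=\phi(t)-\phi(t-)$ and $\Delta^+\phi(t):=\phi(t)-\phi(t+)$ and the usual endpoint conventions $\phi(a-):=\phi(a)$, $\phi(b+):=\phi(b)$, taking integrand and integrator equal to $g$ and using $\int_a^b g\dd g+\int_a^b g\dd g=2\int_a^b g\dd g$ yields
\[
2\int_a^b g\dd g = g(b)^2 - g(a)^2 + \sum_{t\in[a,b]}\big(\Delta^- g(t)\big)^2 - \sum_{t\in[a,b]}\big(\Delta^+ g(t)\big)^2 .
\]
Because $g\in BV_r[a,b]$ is right-continuous, $g(t+)=g(t)$ for every $t\in[a,b)$ and $g(b+)=g(b)$ by convention, so $\Delta^+g(t)=0$ for all $t\in[a,b]$ and the last sum vanishes. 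Dividing the resulting identity by $2$ gives exactly \eqref{partialIntegration}.

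I do not expect a serious obstacle here, since the argument is a one-line consequence of a formula the paper already uses. The only points requiring (minor) care are verifying the hypothesis of \cite[Theorem 6.4.2]{Monteiro2019} --- one of the two functions must have bounded variation, which holds as $g\in BV_r[a,b]$ --- and keeping the sign conventions in the jump terms straight; it is worth noting that it is precisely the right-continuity encoded in $BV_r[a,b]$ (rather than merely $BV[a,b]$) that makes the $\Delta^+$-sum disappear, so the stated clean form of the identity is specific to $BV_r[a,b]$.
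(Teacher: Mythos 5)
Your proof is correct. The only point worth checking carefully is the applicability and sign convention of \cite[Theorem 6.4.2]{Monteiro2019}: since $g\in BV_r[a,b]\subset BV[a,b]\subset G[a,b]$, the hypotheses (regulated integrand, bounded-variation integrator) are met with integrand and integrator both equal to $g$, and the jump products $(g(t)-g(t+))(g(t)-g(t+))$ are insensitive to whether one writes $\Delta^+$ as $g(t+)-g(t)$ or $g(t)-g(t+)$, so your formula agrees with the one invoked in the proof of \cref{th:main}; with right-continuity killing the $\Delta^+$-sum, dividing by two gives \eqref{partialIntegration} exactly. Where you differ from the paper is in the source of the identity: the paper does not derive it at all but cites it as a special case of \cite[Corollary 2.12]{Krejci2003} or \cite[Corollary 1.13]{KrejciLiero2009}, ready-made quadratic identities for the Kurzweil integral from the hysteresis literature, whereas you deduce it from the general Kurzweil--Stieltjes integration-by-parts theorem that the paper already uses elsewhere. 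Your route has the advantage of being self-contained within the toolkit the appendix already assembles from \cite{Monteiro2019} (it would even let the paper drop the two Krejčí citations for this point), at the cost of a short computation; the paper's citation is shorter but imports an external result. Both are legitimate, and your observation that the clean form of the identity hinges on the right-continuity built into $BV_r[a,b]$ is accurate.
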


\begin{proof}
This is a special case of \cite[Corollary 2.12]{Krejci2003} or of \cite[Corollary 1.13]{KrejciLiero2009}. 
\end{proof}

\begin{theorem}[bounded convergence theorem]\label{th:boundedconv}%
Let $g\in BV[a,b]$, $f_n\in G[a,b]$ with $\sup_n \|f_n\|_\infty < \infty$ and $f_n\to f$ pointwise in $[a,b]$ be given. Then 
the integral 
$
\int_a^b f\dd g
$
exists and it holds 
\[
\lim_{n\to\infty} \int_a^b f_n\dd g = \int_a^b f\dd g.
\]
\end{theorem}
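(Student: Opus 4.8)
The plan is to reduce the statement to a dominated convergence theorem for a finite measure. First I would use the Jordan decomposition $g = g^{\uparrow} - g^{\downarrow}$ of $g \in BV[a,b]$ into nondecreasing functions together with the linearity of the Kurzweil--Stieltjes integral in the integrator (\cite[Theorem~6.2.7]{Monteiro2019}), so that it suffices to treat the case where $g$ itself is nondecreasing. For such a $g$, let $\nu$ denote the associated Lebesgue--Stieltjes measure on the Borel $\sigma$-algebra of $[a,b]$, normalized so that $\nu(\{t\}) = g(t+) - g(t-)$ for $t \in (a,b)$ and with the endpoint conventions $g(a-) := g(a)$, $g(b+) := g(b)$; this is a finite measure with $\nu([a,b]) = \var(g) < \infty$, consistent with \eqref{eq:singlepointmass}.

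The core of the argument is the identity $\int_a^b h \dd g = \int_{[a,b]} h \dd \nu$ for every bounded Borel measurable $h\colon [a,b]\to\R$, i.e.\ the equivalence of the Kurzweil--Stieltjes and the Lebesgue--Stieltjes integral for bounded Borel integrands against a $BV$ integrator (cf.\ \cite[Section~6.12]{Monteiro2019}); here one uses that a gauge-fine tag at a jump point of $g$ is forced to be the jump point itself, so no discrepancy with the classical measure arises even when $h$ and $g$ share discontinuities. Since each $f_n$ is regulated and hence Borel measurable, the limit $f := \lim_n f_n$ is a pointwise limit of Borel functions, hence Borel measurable, and it is bounded by $M := \sup_n \|f_n\|_\infty$. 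Thus the displayed identity shows both that $\int_a^b f \dd g$ exists and that it equals $\int_{[a,b]} f \dd \nu$, and the classical dominated convergence theorem for the finite measure $\nu$, with constant dominating function $M$, yields $\int_{[a,b]} f_n \dd \nu \to \int_{[a,b]} f \dd \nu$, which is the claim.

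The step I expect to require the most care is the justification that $\int_a^b f \dd g$ exists as a Kurzweil--Stieltjes integral at all, since $f$ need not be regulated: a pointwise limit of regulated functions can fail to be regulated, so one cannot simply quote the regulated-integrand version of the Kurzweil--Stieltjes/Lebesgue--Stieltjes correspondence, and one must rely on (or reprove) its extension to bounded Borel integrands. If one prefers to avoid that extension, an alternative is an equi-integrability argument: the elementary estimate $\bigl|\int_S f_n \dd g\bigr| \le M\,|\nu|(S)$ shows that the indefinite integrals $t \mapsto \int_a^t f_n \dd g$ are uniformly $|\nu|$-absolutely continuous and of uniformly bounded variation, so a Vitali-type convergence theorem for the Kurzweil--Stieltjes integral applies and gives existence of the limit integral together with the convergence. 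In either case, the assertion is also available directly as the bounded convergence theorem for the Kurzweil--Stieltjes integral in \cite[Section~6.8]{Monteiro2019}, so in the paper this lemma may simply be cited in the same way as the other results of this appendix.
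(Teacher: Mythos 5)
Your argument is correct, but it takes a genuinely different route from the paper: the paper proves \cref{th:boundedconv} by simply citing it as a special case of the bounded convergence theorem for the Kurzweil--Stieltjes integral, \cite[Theorem 6.8.13]{Monteiro2019} -- exactly the option you mention in your last sentence -- whereas you reconstruct the result from classical measure theory. Your reduction (Jordan decomposition plus linearity in the integrator, then the Kurzweil--Stieltjes/Lebesgue--Stieltjes correspondence for a nondecreasing integrator, then dominated convergence for the finite measure $\nu$) is sound, and you correctly identify the one delicate point: the pointwise limit $f$ need not be regulated, so the existence of $\int_a^b f \dd g$ cannot be obtained from the regulated-integrand existence theorem and must come from the correspondence for bounded Borel integrands (with the endpoint/point-mass conventions matching \eqref{eq:singlepointmass}), or from a Vitali-type argument as you sketch. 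What the citation buys is brevity and no dependence on the identification of the two integration theories; what your route buys is a self-contained proof that makes transparent why uniform boundedness plus a $BV$ integrator is enough, and it would generalize verbatim to bounded Borel (not necessarily regulated) integrands $f_n$. Either way the statement holds as used in the paper.
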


\begin{proof}
This is a special case of \cite[Theorem 6.8.13]{Monteiro2019}.
\end{proof}

\bibliographystyle{siamplain}
\bibliography{references}

\end{document}